\newcommand{\IR}{\mathbb{R}}
\newcommand{\IN}{\mathbb{N}}
\newcommand{\IZ}{\mathbb{Z}}
\newcommand{\IC}{\mathbb{C}}
\newcommand{\IQ}{\mathbb{Q}}
\newcommand{\IH}{\mathbb{H}}
\newcommand{\frake}{\mathfrak{e}}
\newcommand{\frakv}{\mathfrak{v}}
\newcommand{\calE}{\mathcal{E}}
\newcommand{\calF}{\mathcal{F}}
\newcommand{\calG}{\mathcal{G}}
\newcommand{\calK}{\mathcal{K}}
\newcommand{\calW}{\mathcal{W}}
\renewcommand{\Re}{\operatorname{Re}}
\renewcommand{\Im}{\operatorname{Im}}
\newcommand{\Mp}{\operatorname{Mp}}
\newcommand{\CT}{\operatorname{CT}}
\newcommand{\Gr}{\operatorname{Gr}}
\newcommand{\SL}{\operatorname{SL}}
\newcommand{\bs}{\ensuremath{\backslash}}
\newcommand{\Iso}{\operatorname{Iso}}
\theoremstyle{definition}
\newtheorem{defn}{Definition}[section]
\newtheorem{exmpl}[defn]{Example}
\theoremstyle{plain}
\newtheorem{thm}[defn]{Theorem}
\newtheorem{lem}[defn]{Lemma}
\newtheorem{cor}[defn]{Corollary}
\begin{document}

\title{Orthogonal Eisenstein Series and Theta Lifts}
\author{Paul Kiefer}
\thanks{The author was supported by the LOEWE research unit USAG}
\thanks{Funded by the Deutsche Forschungsgemeinschaft (DFG, German Research Foundation) TRR 326 \textit{Geometry and Arithmetic of Uniformized Structures}, project number 444845124.}

\begin{abstract}
We show that the additive Borcherds lift of vector-valued non-holo\-mor\-phic Eisenstein series are orthogonal non-holomorphic Eisenstein series for $O(2, l)$. Using this we give another proof that they have a meromorphic continuation, calculate their Fourier expansion and show that they have a functional equation analogous to the classical case. Moreover, we will investigate the image of Borcherds lift and give a sufficient condition for surjectivity.
\end{abstract}

\maketitle

\tableofcontents

\section{Introduction}

Let $L$ be an even lattice of signature $(2, l)$ with $l \geq 4$ even. For an easier exposition we assume that $L$ is maximal but the results also hold for general lattices $L$ (in fact we will work with an arbitrary even lattice of signature $(b^+, b^-)$ and then specialize to signature $(2, l)$). Let $\rho_L$ be the Weil representation of the metaplectic group $\Mp_2(\IZ)$ on the group algebra $\IC[L' / L]$, see \cite{Borcherds}, \cite{BruinierChern}. Similarly as in \cite{BruinierKuehn}, \cite{BruinierKuss} for $\Re(s) \gg 0$ and $\tau = u + i v \in \IH$ we define the vector-valued non-holomorphic Eisenstein series of weight $k \in \IZ$ by
$$E_{k, 0}(\tau, s) := \frac{1}{2} \sum_{M \in \tilde{\Gamma}_\infty \bs \Mp_2(\IZ)} v^s \frake_0 \vert_{k, L} M,$$
where $\vert_{k, L}$ is the Petersson slash operator for the Weil representation $\rho_L$ (in \cite{BruinierKuehn}, \cite{BruinierKuss} the dual Weil representation $\rho_L^*$ is used) and $\tilde{\Gamma}_\infty \subseteq \Mp_2(\IZ)$ is an index $2$ subgroup of the stabilizer of $\infty$. According to \cite{BruinierKuehn} the Eisenstein series has a Fourier expansion of the form
$$2 \frake_0 v^s + c_{k, 0}(0, 0, s) v^{1 - s - k} + \sum_{\gamma \in L' / L} \sum_{\substack{n \in \IZ + q(\gamma) \\ n \neq 0}} c_{k, 0}(\gamma, n, s) \calW_s(4 \pi n v) \frake_\gamma(nu),$$
where $\calW_s$ is a Whittaker function and $\frake_\gamma(nu) := \frake_\gamma e^{2 \pi i u}$. The Fourier expansion yields a meromorphic continuation to all $s \in \IC$. Moreover, since $L$ is maximal one has the functional equation
$$E_{k,0}(\tau, s) = \frac{1}{2} c_{k, 0}(0, 0, s) E_{k,0}(\tau, 1 - k - s).$$
In general, this functional equation involves the vector-valued non-holomorphic Eisenstein series corresponding to $\beta \in L' / L$ with $q(\beta) = 0$. For a primitive isotropic $z \in L$ take $z' \in L'$ with $(z, z') = 1$. Define $K = L \cap z^\perp \cap z'^\perp$ and let $\IH_l = K \otimes \IR + i C$, where $C$ is a fixed connected component of
$$\{Y \in K \otimes \IR \mid q(Y) > 0 \}.$$
Then $\IH_l$ is a hermitian symmetric domain and there is an index $2$ subgroup $O^+(L \otimes \IR) \subseteq O(L \otimes \IR)$ acting on $\IH_l$. Write $L'$ for the dual lattice of $L$ and denote by $\Gamma(L)$ the kernel of the natural map $O^+(L) \to O(L' / L)$. For $\Re(s) \gg 0$ and $Z = X + i Y \in \IH_l$ one can define the orthogonal non-holomorphic Eisenstein series of weight $\kappa = \frac{l}{2} - 1 + k$ corresponding to the $0$-dimensional cusp $z$ by
$$\calE_{\kappa, z}(Z, s) := \sum_{\sigma \in \Gamma(L)_z \bs \Gamma(L)} q(Y)^s \vert_\kappa \sigma,$$
where $\vert_\kappa$ is the Petersson slash operator for $O^+(V)$ and $\Gamma(L)_z$ is the stabilizer of $z$. As in the vector-valued case we want to obtain a meromorphic continuation to all $s \in \IC$ and a functional equation. We will do this by writing the orthogonal non-holomorphic Eisenstein series as a Borcherds lift of vector-valued non-holomorphic Eisenstein series. Therefore, using the theta function
$$\Theta_L(\tau, Z)
= \frac{v^{\frac{l}{2}}}{2 (-2i)^{\kappa}} \sum_{\lambda \in L'} \frac{(\lambda, Z_L)^\kappa}{q(Y)^\kappa} \frake_\lambda(\tau q(\lambda_{Z_L}) + \overline{\tau} q(\lambda_{Z_L^\perp}))$$
we define the Borcherds lift as a regularized integral
$$\Phi_{k,0}(Z, s) := \int_{\SL_2(\IZ) \bs \IH}^{\text{reg}} \langle E_{k, 0}(\tau, s), \Theta_L(\tau, Z) \rangle v^k \mathrm{d}\mu,$$
see \cite{Borcherds}, \cite{BruinierChern}. We will first show that this definition makes sense for $\Re(s) \gg 0$ and that it has a meromorphic continuation to all of $s \in \IC$. In particular, we obtain a functional equation for $\Phi_{k,0}$ coming from the functional equation of the vector-valued non-holomorphic Eisenstein series $E_{k, 0}$. By unfolding against the vector-valued non-holomorphic Eisenstein series one obtains

\begin{thm}[{see Theorem \ref{thm:ThetaLiftIsEisensteinSeries}}]
We have
$$\Phi_{k,0}(Z, s) = 2 \frac{\Gamma(s + \kappa) \zeta(2s + \kappa)}{(-2 \pi i)^\kappa \pi^s} \calE_{\kappa, z}(Z, s).$$
\end{thm}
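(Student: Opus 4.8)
The plan is to evaluate the regularized integral by the unfolding method, exploiting that $E_{k,0}(\tau,s)$ is assembled from a sum over $\tilde{\Gamma}_\infty \bs \Mp_2(\IZ)$ while the integrand $\langle E_{k,0}(\tau,s), \Theta_L(\tau,Z)\rangle v^k$ is $\SL_2(\IZ)$-invariant. For $\Re(s) \gg 0$ I would insert the definition of the Eisenstein series and move the slash operator onto the theta kernel via its modular transformation behavior under $\Mp_2(\IZ)$; since $\Theta_L$ transforms with the representation dual to the one carried by $E_{k,0}$, each summand becomes the seed $v^s \frake_0$ paired with $\Theta_L$ at a translated argument. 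Folding the coset sum then collapses the domain $\SL_2(\IZ) \bs \IH$ to the strip $\tilde{\Gamma}_\infty \bs \IH$, yielding
$$\Phi_{k,0}(Z,s) = \int_{\tilde{\Gamma}_\infty \bs \IH}^{\text{reg}} \langle v^s \frake_0, \Theta_L(\tau,Z)\rangle\, v^k\, \mathrm{d}\mu,$$
where the prefactor $\tfrac12$ absorbs the passage from $\Mp_2(\IZ)$ to $\SL_2(\IZ)$ together with the index-two normalization of $\tilde{\Gamma}_\infty$.

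Next I would retain only the $\frake_0$-component of $\Theta_L$, namely the contribution of $\lambda \in L$, and integrate over $u \in [0,1]$. Writing the exponent as $u\,q(\lambda) + iv\bigl(q(\lambda_{Z_L}) - q(\lambda_{Z_L^\perp})\bigr)$ and using $\int_0^1 e^{2\pi i u q(\lambda)}\,\mathrm{d}u = \delta_{q(\lambda),0}$, the $u$-integral selects exactly the isotropic vectors $\lambda \in L$. For such $\lambda$ one has $q(\lambda_{Z_L}) = -q(\lambda_{Z_L^\perp})$, so the Gaussian in $v$ reduces to $e^{-4\pi v\, q(\lambda_{Z_L})}$. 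Collecting the powers of $v$ — namely $v^s$ from the seed, $v^{l/2}$ from $\Theta_L$, the explicit $v^k$, and $v^{-2}$ from $\mathrm{d}\mu$ — and using $\tfrac{l}{2}+k = \kappa+1$, the $v$-integral becomes $\int_0^\infty v^{s+\kappa-1} e^{-4\pi v q(\lambda_{Z_L})}\,\mathrm{d}v = \Gamma(s+\kappa)\,(4\pi q(\lambda_{Z_L}))^{-(s+\kappa)}$, supplying the factor $\Gamma(s+\kappa)$ and a power $\pi^{-(s+\kappa)}$, part of which recombines with $(-2i)^\kappa$ into $(-2\pi i)^\kappa$.

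It then remains to recognize the resulting lattice sum over isotropic $\lambda$ as the orthogonal Eisenstein series. Using a standard identity expressing $q(\lambda_{Z_L})$ as a constant multiple of $|(\lambda,Z_L)|^2/q(Y)$, I would rewrite each summand, up to an explicit scalar, as $q(Y)^s (\lambda,Z_L)^{-s}\overline{(\lambda,Z_L)}^{-(s+\kappa)}$, which is precisely the shape of $q(Y)^s \vert_\kappa \sigma$ once $(\lambda,Z_L)$ (or its conjugate) is identified with the factor of automorphy attached to the element $\sigma \in \Gamma(L)$ carrying a primitive isotropic $\lambda_0$ to $z$. Decomposing every isotropic $\lambda$ as $\lambda = \pm n\lambda_0$ with $n \geq 1$ and $\lambda_0$ primitive, the sum over $n$ contributes $\sum_{n\geq 1} n^{-(2s+\kappa)} = \zeta(2s+\kappa)$, the two signs account for the factor $2$, and the primitive vectors $\lambda_0$ are in bijection with the cosets $\Gamma(L)_z \bs \Gamma(L)$ indexing $\calE_{\kappa,z}(Z,s)$.

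The main obstacle I anticipate is not the formal unfolding but its justification for the \emph{regularized} integral: one must verify that the Borcherds regularization commutes with the collapse of the fundamental domain and with the interchange of summation and integration, which is only transparent for $\Re(s)$ large, where the unfolded integral converges absolutely; the identity for general $s$ then follows by meromorphic continuation. A second delicate point is the orbit bookkeeping — checking that the $\Gamma(L)_z$-orbits of primitive isotropic vectors match the cosets of $\calE_{\kappa,z}$ exactly once and that the automorphy factors $(\lambda,Z_L)^\kappa$ align with those of the weight-$\kappa$ slash — together with pinning down the normalizing constants and the branch of the fractional powers so that the final prefactor comes out as stated.
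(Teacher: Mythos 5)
Your proposal follows essentially the same route as the paper: unfolding against $E_{k,0}$ (the paper's Theorem \ref{thm:UnfoldingAgainstEisenstein}), with the $u$-integral selecting isotropic $\lambda$, the $v$-integral producing $\Gamma(s+\kappa)$, the identity $2q_{Z_L}(\lambda)=\lvert(\lambda,Z_L)\rvert^2/q(Y)$, and the regrouping of isotropic vectors into positive multiples of primitive ones to yield $\zeta(2s+\kappa)\,\calE_{\kappa,z}(Z,s)$; the paper handles the regularization exactly as you anticipate, for $\Re(s)\gg 0$, by isolating the $\lambda=0$ term, whose two regularized pieces cancel. The only slip is cosmetic: the weight-$\kappa$ automorphy factor sits on the holomorphic side, i.e.\ the summand is $q(Y)^s(\lambda,Z_L)^{-(s+\kappa)}\overline{(\lambda,Z_L)}^{-s}$ rather than the conjugate arrangement you wrote.
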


This shows, in particular, that the orthogonal non-holomorphic Eisenstein series have a meromorphic continuation to all $s \in \IC$. Using the functional equation of vector-valued non-holomorphic Eisenstein series we obtain the functional equation for orthogonal non-holomorphic Eisenstein series

\begin{cor}[{see Corollary \ref{cor:EisensteinSeriesFunctionalEquation}}]
The non-holomorphic Eisenstein series $\calE_{\kappa, z}(Z, s)$ has a functional equation relating the values at $s$ and $1 - k - s$. More precisely, if $L$ is maximal and if we write
$$\tilde{\varphi}_{k,0}(z, s) = \frac{\pi^s \Gamma(1 - k - s + \kappa) \zeta(2(1 - k - s) + \kappa)}{\pi^{1 - k - s}\Gamma(s + \kappa) \zeta(2s + \kappa)} c_{k,0}(0, 0, s),$$
then we have
$$\calE_{\kappa, z}(Z, s) = \frac{1}{2} \tilde{\varphi}_{k,0}(z, s) \calE_{\kappa, z}(Z, 1 - k - s).$$
\end{cor}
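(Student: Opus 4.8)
The plan is to derive the functional equation for $\calE_{\kappa,z}$ by transporting the functional equation of $E_{k,0}$ through the theta lift and then invoking the identity of Theorem~\ref{thm:ThetaLiftIsEisensteinSeries}. All of the analytic content (the meromorphic continuations of both $E_{k,0}$ and $\Phi_{k,0}$) is already in place, so the argument is essentially a transfer of identities followed by bookkeeping of the archimedean and $\zeta$-factors.

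First I would transfer the functional equation to the Borcherds lift. Since $L$ is maximal we have
$$E_{k,0}(\tau,s) = \tfrac{1}{2} c_{k,0}(0,0,s)\, E_{k,0}(\tau, 1-k-s),$$
and the factor $\tfrac{1}{2} c_{k,0}(0,0,s)$ is a scalar independent of $\tau$. Plugging this into the defining regularized integral of $\Phi_{k,0}(Z,s)$ and using the linearity of the pairing $\langle \cdot, \Theta_L(\tau, Z)\rangle$ and of the regularized integral, the scalar pulls out and yields
$$\Phi_{k,0}(Z,s) = \tfrac{1}{2} c_{k,0}(0,0,s)\, \Phi_{k,0}(Z, 1-k-s)$$
as an identity of meromorphic functions in $s$.

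Next I would substitute Theorem~\ref{thm:ThetaLiftIsEisensteinSeries} on both sides. Writing $A(s) = 2\,\Gamma(s+\kappa)\zeta(2s+\kappa)\big/\big((-2\pi i)^\kappa \pi^s\big)$ so that $\Phi_{k,0}(Z,s) = A(s)\,\calE_{\kappa,z}(Z,s)$, the previous identity becomes
$$A(s)\,\calE_{\kappa,z}(Z,s) = \tfrac{1}{2} c_{k,0}(0,0,s)\, A(1-k-s)\,\calE_{\kappa,z}(Z, 1-k-s).$$
Dividing by $A(s)$, the common factors $2$ and $(-2\pi i)^{-\kappa}$ cancel, and the surviving ratio $A(1-k-s)/A(s)$ equals
$$\frac{\pi^s\,\Gamma(1-k-s+\kappa)\,\zeta(2(1-k-s)+\kappa)}{\pi^{1-k-s}\,\Gamma(s+\kappa)\,\zeta(2s+\kappa)}.$$
Multiplying by $\tfrac{1}{2}c_{k,0}(0,0,s)$ produces exactly $\tfrac{1}{2}\tilde{\varphi}_{k,0}(z,s)$, giving the asserted equation $\calE_{\kappa,z}(Z,s) = \tfrac{1}{2}\tilde{\varphi}_{k,0}(z,s)\,\calE_{\kappa,z}(Z,1-k-s)$.

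The only genuinely nonformal point, and hence the step I would expect to require care, is the interchange in the second paragraph: one must know that the functional equation of $E_{k,0}$ --- valid a priori only after meromorphic continuation --- may be inserted under the regularized theta integral, i.e.\ that the regularized lift of the continued Eisenstein series agrees with the meromorphic continuation of $\Phi_{k,0}$. This is guaranteed by the previously established meromorphic continuation of $\Phi_{k,0}(Z,s)$ together with the linearity of the regularization, so no new analysis is needed; everything else is the cancellation of $\Gamma$- and $\zeta$-factors recorded above.
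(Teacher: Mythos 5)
Your proposal is correct and follows essentially the same route as the paper: the functional equation of $E_{k,0}$ is transported through the regularized lift (this is exactly the content of Theorem \ref{thm:ThetaLiftPoles}, which for maximal $L$ collapses to a single term since $\Iso(L'/L)=\{0\}$), and then Theorem \ref{thm:ThetaLiftIsEisensteinSeries} is substituted on both sides, with the ratio $A(1-k-s)/A(s)$ producing precisely $\tilde{\varphi}_{k,0}(z,s)$. The interchange you flag is likewise handled in the paper inside the proof of Theorem \ref{thm:ThetaLiftPoles}, where the continuation of the lift is exhibited termwise in the coefficients $c_{k,\beta}(\gamma,n,s)$, so the functional equation transfers as you claim.
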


By unfolding against the theta function one can calculate the Fourier expansion using the method of \cite{Borcherds}.

\begin{thm}[{see Theorem \ref{thm:ThetaLiftFourierExpansion}}]
The non-holomorphic Eisenstein series has a Fourier expansion of the form
$$\calE_{\kappa, z} = \Phi^K_{0, \beta}(Y / \lvert Y \rvert, s) + \sum_{\lambda \in K'} b_{\kappa,z}(\lambda, Y, s) e(\lambda, X),$$
where $\Phi^K_{0, \beta}$ is a theta lift of some Eisenstein series corresponding to the sublattice $K$. For $\lambda = 0$ we have
$$b_{\kappa, z}(0, Y, s) = 2q(Y)^s + \tilde{\varphi}_{k,0}(z, s) q(Y)^{1 - s - k}.$$
For the other coefficients see Theorem \ref{thm:ThetaLiftFourierExpansion}.
\end{thm}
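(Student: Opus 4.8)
The plan is to use the identity of Theorem~\ref{thm:ThetaLiftIsEisensteinSeries}, which expresses $\calE_{\kappa,z}(Z,s)$ as an explicit $s$-dependent scalar multiple of the theta lift $\Phi_{k,0}(Z,s)$; it therefore suffices to compute the Fourier expansion of the regularized integral
$$\Phi_{k,0}(Z,s) = \int_{\SL_2(\IZ)\bs\IH}^{\reg} \langle E_{k,0}(\tau,s), \Theta_L(\tau,Z)\rangle\, v^k\, \mathrm{d}\mu.$$
In contrast to the proof of Theorem~\ref{thm:ThetaLiftIsEisensteinSeries}, where one unfolds against $E_{k,0}$, here I would keep $E_{k,0}$ intact and unfold the Siegel theta function with respect to the $0$-dimensional cusp $z$, following Borcherds. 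Using the decomposition of $L\otimes\IR$ induced by the primitive isotropic vector $z$ and its partner $z'$, the first step is to rewrite $\Theta_L(\tau,Z)$ through the theta function $\Theta_K$ of the sublattice $K = L\cap z^\perp\cap z'^\perp$, together with a sum over the two coordinates in the hyperbolic plane spanned by $z,z'$. Applying Poisson summation in the coordinate dual to $z$ converts this lattice sum into the characters $e(\lambda,X)$, $\lambda\in K'$, that appear in the asserted expansion and exhibits the $Y$-dependence explicitly.

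Inserting this form of $\Theta_L$ into the regularized integral, the terms separate into two groups. One group reduces, up to the normalising factor of Theorem~\ref{thm:ThetaLiftIsEisensteinSeries}, to a regularized theta integral for the sublattice $K$; by the analogue of that theorem for $K$ this integral equals the lift $\Phi^K_{0,\beta}$ of an Eisenstein series attached to $K$, yielding the first summand $\Phi^K_{0,\beta}(Y/\lvert Y\rvert,s)$. The remaining terms carry the characters $e(\lambda,X)$ and, crucially, unfold the fundamental domain $\SL_2(\IZ)\bs\IH$ to the strip $\tilde\Gamma_\infty\bs\IH$. On the strip one substitutes the known Fourier expansion of $E_{k,0}$ from the introduction and integrates term by term: the $u$-integral matches the Fourier frequencies and pins down each $\lambda\in K'$, while the $v$-integral pairs the coefficients $c_{k,0}(\gamma,n,s)\calW_s(4\pi n v)$ against the Gaussian-times-polynomial factors of the theta function. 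These $v$-integrals reduce to standard Whittaker/Bessel integrals and produce the explicit coefficients $b_{\kappa,z}(\lambda,Y,s)$ for $\lambda\neq 0$.

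For the distinguished frequency $\lambda=0$ only the constant term $2\frake_0 v^s + c_{k,0}(0,0,s)\,v^{1-s-k}$ of $E_{k,0}$ survives the $u$-integration. Evaluating the elementary $v$-integral turns $v^s$ into a multiple of $q(Y)^s$ and $v^{1-s-k}$ into a multiple of $q(Y)^{1-s-k}$ through Mellin-type Gamma integrals; after multiplying by the normalising ratio of Gamma- and zeta-factors of Theorem~\ref{thm:ThetaLiftIsEisensteinSeries}, the first contribution becomes $2q(Y)^s$ and the second assembles precisely into $\tilde\varphi_{k,0}(z,s)\,q(Y)^{1-s-k}$, consistent with Corollary~\ref{cor:EisensteinSeriesFunctionalEquation}. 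The main obstacle is analytic rather than combinatorial: the integral is defined only through regularization, and the unfolding, Poisson summation and term-by-term integration are justified a priori only for $\Re(s)\gg 0$, so I would have to interchange summation with the regularized integral and then appeal to the meromorphic continuation in $s$ to extend each coefficient to all of $\IC$, while checking carefully that the regularization contributes no spurious terms to the constant part.
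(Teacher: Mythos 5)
Your overall route is the one the paper takes: compute the Fourier expansion of the lift $\Phi_{k,0}(Z,s)$ by unfolding against the Siegel theta function, using its expansion with respect to the cusp $z$ as a Poincar\'e series built from the sublattice $K$ (Theorems \ref{thm:ThetaFunctionKExpansion} and \ref{thm:UnfoldingAgainstTheta}); the $c=d=0$ term produces the piece $\Phi^K$, the remaining terms unfold to the strip, and after inserting the Fourier expansion of $E_{k,0}$ and evaluating Bessel-type $v$-integrals one divides by the scalar of Theorem \ref{thm:ThetaLiftIsEisensteinSeries}. For $\lambda\neq 0$ your outline is accurate.

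The gap is in the $\lambda=0$ coefficient, which is exactly the part the statement makes explicit. It is not an elementary Mellin integral whose output visibly matches the normalising ratio. First, the zeroth Fourier coefficient in $X$ collects the contributions of \emph{every} $n\geq 1$ in the Poincar\'e expansion of $\Theta_L$, and after the $\Gamma$-integral these sum to $\sum_{n\geq 1} n^{2s+\kappa-1}e(nb/N_z)$, which diverges for $\Re(s)\gg 0$ and is only given a meaning as the continued value $\zeta_+^c(1-2s-\kappa)$ via the $t$-regularization; this is where spurious-looking reflected arguments actually enter, not where they are excluded. Second, since $p_{\omega,h}$ is not harmonic on the rank-one positive part of $K\otimes\IR$, the operator $\exp(-\Delta/8\pi v)$ leaves a nontrivial finite sum $\sum_{j}(-1)^j\binom{\kappa}{2j}\Gamma(1/2+j)\Gamma(1/2+s-j)$ that must be evaluated in closed form (Lemma \ref{lem:SumOfGummaFunctions}). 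Third, one needs the functional equation of the partial zeta functions combined with the reflection formula (Lemma \ref{lem:ConstantTermZetaIdentity}) to convert $\frac{\Gamma(1-2s-\kappa)}{\Gamma(1-s-\kappa)}\sum_{b,c}e(bc/N_z)\zeta_+^c(1-2s-\kappa)$ into $\frac{N_z^{2s+\kappa}\Gamma(s+\kappa)}{(-2\pi i)^\kappa\pi^s}\bigl(\zeta_+^b(2s+\kappa)+(-1)^\kappa\zeta_+^{-b}(2s+\kappa)\bigr)$. Only after these three steps does the coefficient of $q(Y)^s$ become $2\Gamma(s+\kappa)\zeta(2s+\kappa)(-2\pi i)^{-\kappa}\pi^{-s}$, cancelling against Theorem \ref{thm:ThetaLiftIsEisensteinSeries} to leave $2q(Y)^s$, and similarly for the $q(Y)^{1-s-k}$ term. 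Your closing sentence gestures at the regularization issue, but the actual mechanism --- divergent partial zeta values at reflected arguments plus the Gamma-sum identity --- is the substance of the constant-term claim and is missing from the proposal.
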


If $L$ is not maximal, we have

\begin{thm}[{see Theorem \ref{thm:ThetaLiftNonHolomorphicSurjective}}]
Assume that $L$ splits two hyperbolic planes. Then every orthogonal non-holomorphic Eisenstein series corresponding to a $0$-dimensional cusp for $\Gamma(L)$ is a theta lift of some vector-valued non-holomorphic Eisenstein series for the Weil representation $\rho_L$. In particular, they have a meromorphic continuation to all $s \in \IC$ and have a functional equation similar to the vector-valued case.
\end{thm}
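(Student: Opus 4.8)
The plan is to reduce the general statement to the computation behind Theorem~\ref{thm:ThetaLiftIsEisensteinSeries} in two stages: first, extend that computation from the trivial coset $\frake_0$ to an arbitrary isotropic coset $\beta \in L' / L$; second, show that under the two-hyperbolic-plane hypothesis every $0$-dimensional cusp of $\Gamma(L)$ is realised by such a $\beta$. Since the theta lift takes a vector-valued Eisenstein series to a function on the symmetric domain, surjectivity onto orthogonal Eisenstein series amounts to matching the combinatorics of cusps with that of isotropic cosets.

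First I would introduce, for every $\beta \in L' / L$ with $q(\beta) = 0$, the vector-valued non-holomorphic Eisenstein series
$$E_{k, \beta}(\tau, s) := \frac{1}{2} \sum_{M \in \tilde{\Gamma}_\infty \bs \Mp_2(\IZ)} v^s \frake_\beta \vert_{k, L} M,$$
which are exactly the series already entering the functional equation in the maximal case, and form its regularized theta lift $\Phi_{k, \beta}(Z, s)$. Unfolding against $E_{k, \beta}$ precisely as in the proof of Theorem~\ref{thm:ThetaLiftIsEisensteinSeries}, with $\frake_0$ replaced by $\frake_\beta$, produces an orthogonal non-holomorphic Eisenstein series attached to a $0$-dimensional cusp $z_\beta$; the only difference is the bookkeeping of the coset, so the same identity holds with $z$ replaced by $z_\beta$ and the same explicit constant. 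Because this unfolding never used maximality, it is valid for the general $L$ considered here, and it shows that every $\calE_{\kappa, z_\beta}$ lies in the image of the theta lift.

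The heart of the argument is the cusp classification. A $0$-dimensional cusp of $\Gamma(L)$ is a $\Gamma(L)$-orbit of a primitive isotropic vector $\ell \in L$; writing $d$ for the positive generator of $(\ell, L) \subseteq \IZ$, the vector $\ell / d$ lies in $L'$ and is isotropic, hence defines an isotropic element $\beta_\ell := \ell / d + L \in L' / L$. I would then invoke Eichler's criterion: since $L$ splits two hyperbolic planes, $\Gamma(L)$ acts transitively on primitive vectors of $L$ with prescribed norm and prescribed image $\ell / d + L$ in $L' / L$. Applied to isotropic vectors, this shows that the $\Gamma(L)$-orbit of $\ell$ is determined by $\beta_\ell$, so $\ell \mapsto \beta_\ell$ induces a bijection between $0$-dimensional cusps and isotropic elements $\beta \in L' / L$; in particular every cusp is some $z_\beta$. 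Combined with the previous step, every $\calE_{\kappa, z}$ is, up to the explicit constant, the theta lift of the corresponding $E_{k, \beta}$.

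The meromorphic continuation and functional equation then follow formally: $E_{k, \beta}(\tau, s)$ admits a Fourier expansion, hence a meromorphic continuation in $s$ together with a functional equation relating $s$ and $1 - k - s$, and both properties transport through the regularized theta lift to $\Phi_{k, \beta}$ and thus to $\calE_{\kappa, z}$, exactly as in Corollary~\ref{cor:EisensteinSeriesFunctionalEquation}. I expect the main obstacle to be the third step: checking that the two-hyperbolic-plane hypothesis is precisely what makes Eichler's criterion applicable to \emph{isotropic} vectors and produces a clean bijection between cusps and isotropic cosets, and verifying that the divisor $d$ together with the order of $\beta$ matches the data entering the unfolding, so that $\Phi_{k,\beta}$ lands on the correct cusp $z_\beta$ with the correct normalisation.
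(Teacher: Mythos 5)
Your overall skeleton is right (lift the $E_{k,\beta}$ for all isotropic $\beta$, classify the $0$-dimensional cusps using the two-hyperbolic-planes hypothesis, and transport continuation and functional equation through the lift), and your cusp classification step is exactly what the paper does: the map $\pi_L : \Gamma(L)\bs \Iso_0(L) \to \Iso(L'/L)$, $z \mapsto z/N_z$, is bijective when $L$ splits two hyperbolic planes (the paper cites Freitag--Hermann; this is Eichler's criterion). But there is a genuine gap in your second step. Unfolding $\Phi_{k,\beta}$ against $E_{k,\beta}$ does \emph{not} produce a single orthogonal Eisenstein series attached to a single cusp $z_\beta$. By Theorem \ref{thm:ThetaLiftIsEisensteinSeries} it produces the linear combination
$$\Phi_{k, \beta}(Z, s) = \frac{\Gamma(s + \kappa)}{(-2\pi i)^{\kappa}\pi^{s}} \sum_{\delta \in \Iso(L' / L)} N_\delta^{2s + \kappa} \zeta_+^{k_{\delta\beta}}(2s + \kappa)\, \calG_{\kappa, \delta}(Z, s),$$
where the sum runs over \emph{all} isotropic $\delta$ with $\beta \in \langle \delta \rangle$, i.e.\ over all isotropic cosets of which $\beta$ is a multiple. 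Only when $L'/L$ is anisotropic (the maximal-lattice example at the end of the paper) does this collapse to one term; for the general non-maximal $L$ that the theorem is about, the lift of one $E_{k,\beta}$ mixes Eisenstein series at several cusps, so "every $\calE_{\kappa,z_\beta}$ lies in the image" does not follow from the unfolding alone.

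The missing idea is the inversion of this divisor-type linear system, which is the actual content of the paper's proof of Theorem \ref{thm:ThetaLiftNonHolomorphicSurjective}. The paper regroups the sum over cyclic isotropic subgroups $H \ni \beta$ and their generators, passes to the character twists $E_{k,\beta,\chi} = \sum_{m} \chi(m) E_{k,m\beta}$, and shows that when $\langle\beta\rangle$ is a \emph{maximal} cyclic isotropic subgroup the lift of $E_{k,\beta,\chi}$ equals $L(2s+\kappa,\chi)\,\calG_{\kappa,\beta,\chi}$ times an explicit factor, i.e.\ a single twisted orthogonal Eisenstein series. A descending induction over the lattice of cyclic isotropic subgroups then puts every $\calG_{\kappa,\delta,\chi}$, hence by orthogonality of characters every $\calG_{\kappa,\delta}$, in the span of the lifts; injectivity of $\pi_L$ finally identifies each $\calG_{\kappa,\delta}$ with a single $\calE_{\kappa,\lambda}$. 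You should add this triangularity-plus-induction argument (or an equivalent M\"obius-inversion argument over cyclic isotropic subgroups); without it the surjectivity claim is unproved. Your final step, transporting the meromorphic continuation and functional equation through the lift, is fine once surjectivity is established.
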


More generally, Theorem \ref{thm:ThetaLiftNonHolomorphicSurjective} determines the image of the additive Borcherds lift for arbitrary lattices $L$.

\subsection*{Acknowledgment}

I would like to thank my advisor J.H. Bruinier for suggesting this topic as part of my doctoral thesis. Moreover, I would like to thank him for his support and our helpful discussions.

\section{Vector-Valued Non-Holomorphic Eisenstein Series}

We will now introduce the Weil representation and vector-valued modular forms. Therefore, let $\IH := \{\tau = u + iv \in \IC \mid v > 0 \}$ be the usual upper half-plane. For $z \in \IC$ we write $e(z) := e^{2 \pi i z}$ and we denote by $\sqrt{z} = z^{\frac{1}{2}}$ the principal branch of the square-root, i.e. $\arg(\sqrt{z}) \in (- \frac{\pi}{2}, \frac{\pi}{2}]$. 

\begin{defn}
We denote by $\Mp_2(\IR)$ the \emph{metaplectic cover} of $\SL_2(\IR)$. It is realized as pairs $(M, \phi)$, where $M \in \SL_2(\IR)$ and $\phi : \IH \to \IC$ is a holomorphic square root of $\tau \mapsto c \tau + d$. The product for $(M_1, \phi_1), (M_2, \phi_2) \in \Mp_2(\IR)$ is given by
$$(M_1, \phi_1(\tau))(M_2, \phi_2(\tau)) := (M_1 M_2, \phi_1(M_2 \tau) \phi_2(\tau)),$$
where $\left(\begin{smallmatrix} a & b \\ c & d\end{smallmatrix}\right) \tau = \frac{a \tau + d}{c \tau + d}$ is the usual action of $\SL_2(\IR)$.
\end{defn}
By $\Mp_2(\IZ)$ we denote the inverse image of $\SL_2(\IZ)$ under the covering map. It is generated by
$$T = \left(\begin{pmatrix}1 & 1 \\ 0 & 1\end{pmatrix}, 1\right) \quad \text{and} \quad S = \left(\begin{pmatrix}0 & -1 \\ 1 & 0\end{pmatrix}, \sqrt{\tau}\right).$$
We have the relation $S^2 = (ST)^3 = Z$, where $Z = \left(\left(\begin{smallmatrix}-1 & 0 \\ 0 & -1\end{smallmatrix}\right), i\right)$ is the standard generator of the center of $\Mp_2(\IZ)$. Furthermore, we will write $\Gamma_\infty := \{\left(\begin{smallmatrix}1 & n \\ 0 & 1\end{smallmatrix}\right) \mid n \in \IZ \}$ and $\tilde{\Gamma}_\infty := \{\left(\left(\begin{smallmatrix}1 & n \\ 0 & 1\end{smallmatrix}\right), 1\right) \mid n \in \IZ \}$. For an even non-degenerate lattice $L$ of signature $(b^+, b^-)$ consider the group ring $\IC[L' / L]$ with standard basis $(\frake_\gamma)_{\gamma \in L' / L}$. For $\frakv = \sum_{\gamma \in L' / L} \frakv_\gamma \frake_\gamma \in \IC[L' / L]$ we write $\frakv^* := \sum_{\gamma \in L' / L} \frakv_\gamma \frake_{-\gamma}$. Moreover, we write $\langle \cdot, \cdot \rangle$ for the standard inner product on $\IC[L' / L]$ which is anti-linear in the second variable. Write $\Iso(L' / L) \subseteq L' / L$ for the set of isotropic elements and denote the subspace of vectors that are supported on isotropic elements by $\Iso(\IC[L' / L])$. Moreover, we introduce the notation $\frake_\gamma(\tau) := e(\tau) \frake_\gamma = e^{2 \pi i \tau} \frake_\gamma$.

\begin{defn}[{\cite{Borcherds}}]
The \emph{Weil representation} is the unitary representation $\rho_L$ of $\Mp_2(\IZ)$ on $\IC[L' / L]$ defined by
$$\rho_L(T) \frake_\gamma := e(q(\gamma)) \frake_\gamma \quad \text{and} \quad \rho_L(S) \frake_\gamma := \frac{\sqrt{i}^{b^- - b^+}}{\sqrt{L' / L}} \sum_{\delta \in L' / L} e(-(\gamma, \delta)) \frake_\delta.$$
The Weil representation factors through a finite quotient of $\Mp_2(\IZ)$.
\end{defn}

A short calculation using orthogonality of characters shows for example 
$$\rho_L(Z) \frake_\gamma = i^{b^- - b^+} \frake_{-\gamma}.$$
For a vector-valued function $f : \IH \to \IC[L' / L]$ we write $f_\gamma : \IH \to \IC$ for its components with respect to the standard basis, i.e. $f = \sum_{\gamma \in L' / L} f_\gamma \frake_\gamma$. For $k \in \frac{1}{2} \IZ$ we define the \emph{Petersson slash operator} $f \mapsto f\vert_{k, L}(M, \phi)$ by
$$(f \vert_{k, L}(M, \phi))(\tau) = \phi(\tau)^{-2 k} \rho_L^{-1}(M, \phi) f(M \tau).$$
If $f : \IH \to \IC[L' / L]$ is smooth and invariant under the action of $T$, i.e. $f \vert_{k, L} T = f$, then we have a Fourier expansion
$$f(\tau) = \sum_{\gamma \in L' / L} \sum_{n \in \IZ + q(\gamma)} c(\gamma, n, v) \frake_\gamma(nu).$$

\begin{defn}
A function $f : \IH \to \IC[L' / L]$ is said to be \emph{modular} of weight $k$ with respect to the Weil representation $\rho_L$ if $f \vert_{k, L} (M, \phi) = f$ for all $(M, \phi) \in \Mp_2(\IZ)$.
\end{defn}

Assume now that $b^+ - b^-$ is even and let $k \in \IZ$. Moreover, set $\kappa = \frac{b^- - b^+}{2} + k$. Write $\Iso(L' / L)$ for the isotropic elements in $L' / L$ and let $\beta \in \Iso(L' / L)$. Similar as in \cite{BruinierKuehn} we define the \emph{vector-valued non-holomorphic Eisenstein series} of weight $k$ by
$$E_{k, \beta}(\tau, s) = \frac{1}{2} \sum_{M \in \tilde{\Gamma}_\infty \bs \Mp_2(\IZ)} v^s \frake_\beta \vert_{k, L} M.$$
Observe that \cite{BruinierKuehn} consider the dual Weil representation $\rho_L^*$. For $\beta \in \Iso(L' / L)$ of order $N_\beta$ and a character $\chi : (\IZ / N_\beta \IZ)^\times \to \IC^\times$ we define
$$E_{k, \beta, \chi}(\tau, s) := \sum_{n \in (\IZ / N_\beta \IZ)^\times} \chi(n) E_{k, n \beta}(\tau, s).$$
More generally, for $\frakv \in \Iso(\IC[L' / L])$ we define
\begin{align*}
E_{k, \frakv}(\tau, s)
&= \frac{1}{2} \sum_{M \in \tilde{\Gamma}_\infty \bs \Mp_2(\IZ)} v^s \frakv \vert_{k, L} M \\
&= \sum_{\beta \in \Iso(L' / L)} \frakv_\beta E_{k, \beta}(\tau, s).
\end{align*}
We have $E_{k, \frakv^*} = (-1)^\kappa E_{k, \frakv}$ and a Fourier expansion of the form
\begin{align*}
E_{k, \frakv}(\tau, s) &= (\frakv + (-1)^\kappa \frakv^*) v^s + \sum_{\gamma \in \Iso(L' / L)} c_{k, \frakv}(\gamma, 0, s) v^{1 - s - k} \\
&+ \sum_{\gamma \in L' / L} \sum_{\substack{n \in \IZ + q(\gamma) \\ n \neq 0}} c_{k, \frakv}(\gamma, n, s) \calW_s(4 \pi n v) \frake_\gamma(n u),
\end{align*}
where $\calW_s$ is the Whittaker function defined in \cite{BruinierKuehn}. For the precise coefficients see \cite{BruinierKuehn}, \cite{Williams} (or \cite{BruinierKuss}, \cite{Scheithauer}, \cite{Schwagenscheidt} for the holomorphic case $k > 2$), we will not need them here. We will also use the notation
\begin{align*}
c_{k, \frakv}(\gamma, n, s, v) &:= c_{k, \frakv}(\gamma, n, s) \calW_s(4 \pi n v), \\
c_{k, \frakv}(\gamma, 0, s, v) &:= (\frakv_\gamma + (-1)^\kappa \frakv_{-\gamma}) v^s + c_{k, \frakv}(\gamma, 0, s) v^{1 - s - k}.
\end{align*}
If $\kappa$ is odd, the Eisenstein series for $\beta \in \Iso(L' / L)$ with $\beta = -\beta$ vanish identically. It can be easily seen that the vector-valued Eisenstein series are $\Mp_2(\IZ)$-translates of usual scalar-valued Eisenstein series. In particular, using \cite[Section 4.10]{DiamondShurman} we obtain the meromorphic continuation of the non-holomorphic Eisenstein series. According to \cite[Page 372]{Hejhal} (his Eisenstein series are given by $y^{\frac{k}{2}} E_{k, \beta}(\tau, s - \frac{k}{2})$ and he considers more general representations) we have the functional equation
$$E_{k, \beta}(\tau, s) = \frac{1}{2} \sum_{\alpha \in \Iso(L' / L)} c_{k, \beta}(\alpha, 0, s) E_{k, \alpha}(\tau, 1 - k - s).$$

\section{Siegel Theta Function}

Let $p$ be a polynomial on $\IR^{(b^+, b^-)}$ which is homogeneous of degree $\kappa$ in the positive definite variables and independent of the negative definite variables. For an isometry $\nu : L \otimes \IR \to \IR^{(b^+, b^-)}$ we write $\nu^+$ and $\nu^-$ for the inverse image of $\IR^{(b^+, 0)}$ and $\IR^{(0, b^-)}$. For an element $\lambda \in L \otimes \IR$ we write $\lambda_{\nu^\pm}$ for the projection of $\lambda$ onto $\nu^\pm$. The positive definite \emph{majorant} associated to $\nu$ is then given by $q_\nu(\lambda) = q(\lambda_{\nu^+}) - q(\lambda_{\nu^-})$. For $\gamma \in L' / L, \tau \in \IH$ and an isometry $\nu : L \otimes \IR \to \IR^{(b^+, b^-)}$ following \cite{Borcherds} we define the \emph{Siegel theta function}
\begin{align*}
\theta_\gamma(\tau, \alpha, \beta, \nu, p)
&:= \sum_{\lambda \in \gamma + L} \exp\left(\frac{\Delta}{8 \pi v}\right)(p)(\nu(\lambda + \beta)) \\
&\times e(\tau q((\lambda + \beta)_{\nu^+}) + \overline{\tau} q((\lambda + \beta)_{\nu^-} - (\lambda + \beta / 2, \alpha))),
\end{align*}
where $\Delta$ is the usual \emph{Laplace operator} on $\IR^{b^+ + b^-}$ and $\alpha, \beta \in L \otimes \IR$. Moreover, we define
\begin{align*}
\Theta_L(\tau, \alpha, \beta, \nu, p)
&:= \sum_{\gamma \in L' / L} \theta_\gamma(\tau, \alpha, \beta, \nu, p) \frake_\gamma.
\end{align*}
For $\alpha = \beta = 0$ we write
\begin{align*}
\theta_\gamma(\tau, \nu, p) := \theta_\gamma(\tau, 0, 0, \nu, p)
&= \sum_{\lambda \in \gamma + L} \exp\left(-\frac{\Delta}{8 \pi v}\right)(p)(\nu(\lambda)) e(\tau q(\lambda_{\nu^+}) + \overline{\tau} q(\lambda_{\nu^-})) \\
&= \sum_{\lambda \in \gamma + L} \exp\left(-\frac{\Delta}{8 \pi v}\right)(p)(\nu(\lambda)) e(iv q_\nu(\lambda) + u q(\lambda))
\end{align*}
and
\begin{align*}
\Theta_L(\tau, \nu, p) := \Theta_L(\tau, 0, 0, \nu, p)
&= \sum_{\gamma \in L' / L} \theta_\gamma(\tau, \nu, p) \frake_\gamma.
\end{align*}
Using Poisson summation one obtains

\begin{thm}[{\cite[Theorem 4.1]{Borcherds}}]
For $(M, \phi) \in \Mp_2(\IZ), M = \left(\begin{smallmatrix}a & b \\ c & d\end{smallmatrix}\right)$ we have
$$\Theta_L(M\tau, a \alpha + b \beta, c \alpha + d \beta, \nu, p) = \phi(\tau)^{b^+ + 2 \kappa} \overline{\phi(\tau)}^{b^-} \rho_L(M, \phi) \Theta_L(\tau, \alpha, \beta, \nu, p).$$
In particular, for $\alpha = \beta = 0$, the theta function $\Theta_L(\tau, \nu, p)$ has weight $(\frac{b^+}{2} + \kappa, \frac{b^-}{2})$.
\end{thm}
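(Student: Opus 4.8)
The plan is to reduce the assertion to the two generators $T = \left(\left(\begin{smallmatrix}1 & 1 \\ 0 & 1\end{smallmatrix}\right), 1\right)$ and $S = \left(\left(\begin{smallmatrix}0 & -1 \\ 1 & 0\end{smallmatrix}\right), \sqrt{\tau}\right)$ of $\Mp_2(\IZ)$. Both sides of the claimed identity are genuine functions of $\tau$, and the right-hand side is assembled from the honest representation $\rho_L$ together with the automorphy factor $\phi(\tau)^{b^+ + 2\kappa}\overline{\phi(\tau)}^{b^-}$, which obeys the metaplectic cocycle relation by construction of $\phi$. Consequently, once the transformation law is checked on $S$ and $T$, the defining relations $S^2 = (ST)^3 = Z$ are automatically respected by both sides, and the identity propagates to all of $\Mp_2(\IZ)$. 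So it suffices to treat the two generators separately.

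For $T$ the verification is a direct substitution: here $\phi \equiv 1$, so the automorphy factor is trivial, and $(a\alpha + b\beta, c\alpha + d\beta) = (\alpha + \beta, \beta)$. Replacing $\tau$ by $\tau + 1$ in the exponential of $\theta_\gamma$ multiplies the $\lambda$-th summand by $e(q((\lambda+\beta)_{\nu^+}) + q((\lambda+\beta)_{\nu^-})) = e(q(\lambda + \beta))$, while the shift $\alpha \mapsto \alpha + \beta$ adjusts the linear term $-(\lambda + \beta/2, \alpha)$ consistently. For $\lambda \in \gamma + L$ one has $q(\lambda) \equiv q(\gamma) \pmod{\IZ}$ because $L$ is even, so the net phase on the $\frake_\gamma$-component is $e(q(\gamma))$, which is exactly the action of $\rho_L(T)$. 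This is the routine case.

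For $S$ we have $S\tau = -1/\tau$, the pair transforms as $(a\alpha + b\beta, c\alpha + d\beta) = (-\beta, \alpha)$, and $\phi(\tau) = \sqrt{\tau}$. This is the substantial case and rests on Poisson summation. For fixed $\tau$ one views the $\gamma$-component as the sum over the coset $\gamma + L$ of the Schwartz function $x \mapsto \exp(-\Delta/(8\pi v))(p)(\nu(x))\, e(\tau q(x_{\nu^+}) + \overline{\tau} q(x_{\nu^-}) - (x + \beta/2, \alpha))$. Poisson summation converts this coset sum into a sum over the dual lattice $L'$, weighted by the characters $e(-(\gamma,\delta))$ and the covolume factor $\lvert L'/L\rvert^{-1/2}$; collecting these characters over all $\gamma$ reproduces precisely the matrix entries of $\rho_L(S)$. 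The automorphy factor arises from the Fourier transform of the Gaussian-times-polynomial: the positive definite Gaussian in $b^+$ variables transforms with a power of $\sqrt{\tau}$, the negative definite part with a power of $\overline{\sqrt{\tau}}$, and the homogeneity of degree $\kappa$ of $p$ in the positive definite variables contributes the extra $(\sqrt{\tau})^{2\kappa}$, yielding in total
$$\phi(\tau)^{b^+ + 2\kappa}\,\overline{\phi(\tau)}^{b^-}.$$
The role of the smoothing operator $\exp(-\Delta/(8\pi v))$ is precisely to make $\exp(-\Delta/(8\pi v))(p)$ times the Gaussian behave like an eigenfunction of the Fourier transform, so that its transform is again of the same shape with $\tau$ replaced by $-1/\tau$.

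The main obstacle is the exact evaluation of this Fourier transform in the presence of both the polynomial $p$ and the heat operator, keeping the constants $\sqrt{i}^{\,b^- - b^+}$ and the powers of $\sqrt{\tau}$ consistent with the normalization of $\rho_L(S)$; essentially all the bookkeeping concentrates here. I would organize it by first settling the case $p = 1$, where the transform of the pure indefinite Gaussian is classical, and then reducing a general homogeneous $p$ to this case by realizing $p(\nu(x))$ (after smoothing) as a constant-coefficient differential operator in a dual variable applied to the pure Gaussian, so that the degree-$\kappa$ homogeneity directly produces the additional factor $(\sqrt{\tau})^{2\kappa}$. Finally, specializing $\alpha = \beta = 0$ gives $\Theta_L(M\tau, \nu, p) = \phi(\tau)^{b^+ + 2\kappa}\overline{\phi(\tau)}^{b^-}\rho_L(M,\phi)\Theta_L(\tau,\nu,p)$, that is, weight $(\tfrac{b^+}{2} + \kappa, \tfrac{b^-}{2})$ as asserted.
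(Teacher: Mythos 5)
Your outline is correct and follows exactly the route the paper intends: the paper gives no proof of this statement, remarking only that it follows ``using Poisson summation'' and citing Borcherds, whose argument is precisely your reduction to the generators $S$ and $T$ (with the cocycle property of $\phi$ and the fact that $\rho_L$ is a representation making the check on generators sufficient), the $T$-case being the elementary phase computation and the $S$-case being Poisson summation over the coset $\gamma+L$. The single step you defer---the exact Fourier transform of $\exp(-\Delta/(8\pi v))(p)$ times the indefinite Gaussian, which produces the factor $\phi(\tau)^{b^++2\kappa}\overline{\phi(\tau)}^{b^-}$ and the normalization of $\rho_L(S)$---is exactly the technical lemma in Section 3 of Borcherds' paper, so your sketch matches the cited proof in both structure and substance.
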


We will need the following growth estimate which is proven as in the classical case.

\begin{lem}
The Siegel theta function satisfies
$$\theta_\gamma(\tau, \nu, p) = O(v^{-\frac{b^+}{2} - \kappa - \frac{b^-}{2}})$$
for $v \to 0$ uniformly in $u$.
\end{lem}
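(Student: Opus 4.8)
\emph{Proof plan.} The plan is to deduce the estimate from the modular transformation behaviour of $\Theta_L$ established in the previous theorem, combined with the reduction of $\tau$ into the standard fundamental domain of $\SL_2(\IZ)$, exactly as one argues for positive definite theta functions. Since $\lvert \theta_\gamma(\tau, \nu, p) \rvert \leq \lVert \Theta_L(\tau, \nu, p) \rVert$ for every component, where $\lVert \cdot \rVert$ is the norm attached to the inner product $\langle \cdot, \cdot \rangle$, it suffices to bound the vector-valued $\Theta_L(\tau, \nu, p)$ uniformly in $u$.

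First I would fix $\tau = u + iv$ with $v$ small and choose a lift $(M, \phi) \in \Mp_2(\IZ)$ of some $M = \left(\begin{smallmatrix} a & b \\ c & d \end{smallmatrix}\right) \in \SL_2(\IZ)$ such that $M\tau$ lies in the standard fundamental domain $\calF$. Applying the transformation formula of the preceding theorem (with $\alpha = \beta = 0$), using that $\rho_L$ is unitary and that $\lvert \phi(\tau) \rvert^2 = \lvert c\tau + d \rvert$, and then taking norms, yields
$$\lVert \Theta_L(\tau, \nu, p) \rVert = \lvert c\tau + d \rvert^{-w} \, \lVert \Theta_L(M\tau, \nu, p) \rVert, \qquad w := \tfrac{b^+}{2} + \kappa + \tfrac{b^-}{2},$$
so the two factors can be estimated separately. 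Note that $w$ is exactly the exponent appearing in the claimed bound, so a crude estimate of each factor will suffice.

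Next I would bound $\lVert \Theta_L(M\tau, \nu, p) \rVert$ uniformly on $\calF$ by a constant $C$: since the majorant $q_\nu$ is positive definite, each summand is dominated by a polynomial in $\lambda$ times $e^{-2\pi \Im(M\tau) q_\nu(\lambda)}$, and for $\Im(M\tau) \geq \frac{\sqrt{3}}{2}$ the heat-transformed polynomial $\exp(-\Delta/8\pi v)(p)$ has uniformly bounded coefficients; hence the series is dominated by a convergent one, uniformly in $\Re(M\tau)$ and as $\Im(M\tau) \to \infty$ (where only the $\lambda = 0$ term survives and tends to a finite limit). For the automorphy factor I would use that, since $\Im(M\tau) \geq \frac{\sqrt{3}}{2} > v$ forces $c \neq 0$ for small $v$, one has $\lvert c\tau + d \rvert^2 = (cu + d)^2 + c^2 v^2 \geq c^2 v^2 \geq v^2$, hence $\lvert c\tau + d \rvert^{-w} \leq v^{-w}$. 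Combining gives $\lVert \Theta_L(\tau, \nu, p) \rVert \leq C v^{-w}$ uniformly in $u$, which is the assertion. The only genuinely delicate point, the rest being routine modular reduction, is the uniform boundedness on $\calF$ up to the cusp, i.e. checking that the operator $\exp(-\Delta/8\pi v)$ applied to $p$ neither destroys convergence nor the uniformity; I expect this to be the main, though mild, obstacle.
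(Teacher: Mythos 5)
Your argument is correct and is precisely the ``classical case'' argument that the paper invokes without writing out: reduce $\tau$ to the standard fundamental domain via the transformation formula with $\alpha=\beta=0$, bound $\lVert\Theta_L(M\tau,\nu,p)\rVert$ there by a constant (using positive definiteness of $q_\nu$ and the fact that $\exp(-\Delta/8\pi v')(p)$ is a \emph{finite} sum with coefficients uniformly bounded for $\Im(M\tau)\geq\tfrac{\sqrt{3}}{2}$), and estimate the automorphy factor by $\lvert c\tau+d\rvert\geq\lvert c\rvert v\geq v$ since $c\neq 0$ for small $v$. Nothing further is needed.
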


Borcherds shows in \cite{Borcherds} that the theta function can be written as a Poincar\'e series. We will indicate the construction, which is also given in \cite{BruinierChern}. Write $\Iso_0(L)$ for the primitive isotropic elements of $L$ and let $z \in \Iso_0(L), z' \in L'$ with $(z, z') = 1$. Let $N_z$ be the level of $z$ and define the lattice
$$K = L \cap z^\perp \cap z'^\perp.$$
Then $K$ has signature $(b^+ - 1, b^- - 1)$. For a vector $\lambda \in L \otimes \IR$ we write $\lambda_K$ for its orthogonal projection to $K \otimes \IR$, which is given by
$$\lambda_K = \lambda - (\lambda, z) z' + (\lambda, z)(z', z') z - (\lambda, z') z.$$
Let $\zeta \in L$ such that $(z, \zeta) = N_z$ and write
$$\zeta = \zeta_K + N_z z' + B z$$
for some $B \in \IQ$. Then we have
$$L = K \oplus \IZ \zeta + \IZ z.$$
Consider the sublattice
$$L_0' = \{ \lambda \in L' \vert (\lambda, z) \equiv 0 \bmod{N_z} \} \subseteq L'$$
and the projection
$$\pi : L_0' \to K', \lambda \mapsto \pi(\lambda) = \lambda_K + \frac{(\lambda, z)}{N_z} \zeta_K.$$
This projection induces a surjective map $L_0' / L \to K' / K$ which we also denote by $\pi$ and we have
$$L_0' / L = \{ \lambda \in L' / L \vert (\lambda, z) \equiv 0 \bmod{N_z} \}.$$
For an isometry $\nu : L \otimes \IR \to \IR^{(b^+, b^-)}$ we write $\omega^{\pm}$ for the orthogonal complement of $z_{\nu^\pm}$ in $\nu^\pm$. This yields a decomposition
$$L \otimes \IR = \omega^+ \oplus \IR z_{\nu^+} \oplus \omega^- \oplus \IR z_{\nu^-}$$
and for $\lambda \in L \otimes \IR$ we write $\lambda_{\omega^\pm}$ for the corresponding projections of $\lambda$ onto $\omega^\pm$. Additionally the map
$$\omega : L \otimes \IR \to \IR^{(b^+, b^-)}, \lambda \mapsto \nu(\lambda_{\omega^+} + \lambda_{\omega^-})$$
can be seen to be an isometry $K \otimes \IR \to \IR^{(b^+ - 1, b^- - 1)}$ by restriction. For a homogeneous polynomial $p$ as above we now define the homogeneous polynomials $p_{\omega, h}$ of degree $\kappa - h$ in the positive definite variables by
$$p(\nu(\lambda)) = \sum_{h} (\lambda, z_{\nu^+})^{h} p_{\omega, h}(\omega(\lambda)).$$
We have the following

\begin{thm}[{\cite[Theorem 5.2]{Borcherds}}]\label{thm:ThetaFunctionKExpansion}
Let $\mu = -z' + \frac{z_{\nu^+}}{2 z_{\nu^+}^2} + \frac{z_{\nu^-}}{2 z_{\nu^-}^2} \in L \otimes \IR$. Then
\begin{align*}
\theta_{\gamma + L}(\tau, \nu, p)
&= \frac{1}{\sqrt{2 v z_{\nu^+}^2}} \sum_{\substack{c, d \in \IZ \\ c \equiv (\gamma, z) \bmod N_z}} \sum_{h} (-2iv)^{-h} \\
&\times (c \overline{\tau} + d)^{h} e\left(-\frac{\lvert c \tau + d \rvert^2}{4 i v z_{\nu^+}^2} - (\gamma, z') d + q(z') cd \right) \\
&\times \theta_{K + \pi(\gamma - cz')}(\tau, d \mu_K, -c \mu_K, \omega, p_{\omega, h}).
\end{align*}
\end{thm}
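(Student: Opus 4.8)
\section*{Proof proposal}

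The plan is to follow Borcherds and prove the identity by partial Poisson summation in the direction of the isotropic vector $z$. Using $L = K \oplus \IZ\zeta + \IZ z$, I would first introduce adapted coordinates on the coset: writing a typical $\lambda \in \gamma + L$ as a $K$-part plus an integer multiple of $\zeta$ plus an integer multiple $n$ of $z$, one has $(\lambda, z) = c$ with $c \equiv (\gamma, z) \bmod N_z$ (because $(\zeta, z) = N_z$ and $(K, z) = (z, z) = 0$), and the congruence is precisely the condition that $\gamma - cz' \in L_0'$, so that $\pi(\gamma - cz')$ makes sense. The decisive observation is that the projection formula gives $(\lambda + z)_K = \lambda_K$, so the index $n$ along $z$ leaves both $\lambda_K$ and $\omega(\lambda)$ unchanged. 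Hence $c$ will survive as an honest summation index in the final formula, while the sum over $n$ is exactly the one to which I would apply Poisson summation, producing the dual variable $d$.

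Next I would split both the exponent and the polynomial along the orthogonal decomposition $L\otimes\IR = \omega^+ \oplus \IR z_{\nu^+} \oplus \omega^- \oplus \IR z_{\nu^-}$. Writing the $\IR z_{\nu^\pm}$-components as $\frac{(\lambda, z_{\nu^\pm})}{z_{\nu^\pm}^2} z_{\nu^\pm}$ breaks $q(\lambda_{\nu^\pm})$ into a $K$-part depending only on $\omega(\lambda)$ and a one-dimensional Gaussian in the real quantities $(\lambda, z_{\nu^\pm})$, while the prepared expansion $p(\nu(\lambda)) = \sum_h (\lambda, z_{\nu^+})^h p_{\omega, h}(\omega(\lambda))$ isolates the $z_{\nu^+}$-dependence into the monomials $(\lambda, z_{\nu^+})^h$. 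Since $\lambda_K$ and $\omega(\lambda)$ are insensitive to the shift $\lambda \mapsto \lambda + z$, the sum over $n$ only acts on the one-dimensional monomial-times-Gaussian factor in the $z_{\nu^\pm}$-directions, which is where the whole transformation takes place.

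The technical heart is the Poisson summation of $\sum_n (\lambda, z_{\nu^+})^h \exp(\text{Gaussian in } (\lambda, z_{\nu^\pm}))$. Here the heat operator $\exp(-\Delta/8\pi v)$ in the definition of $\theta_{\gamma+L}$ becomes essential: it is exactly the normalization making $\exp(-\Delta/8\pi v)(p)$ times the Gaussian a Schwartz function whose one-variable Fourier transform is again a clean monomial times a Gaussian. Carrying this out, the Gaussian in the $z$-coordinate turns into $e\!\left(-\frac{\lvert c\tau + d\rvert^2}{4 i v z_{\nu^+}^2}\right)$ with the prefactor $\frac{1}{\sqrt{2 v z_{\nu^+}^2}}$, and the monomial $(\lambda, z_{\nu^+})^h$ transforms into $(-2iv)^{-h}(c\overline{\tau} + d)^h$. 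I expect pinning down the exact constants and the holomorphic/antiholomorphic splitting $(c\overline{\tau}+d)^h$ to be the main obstacle, since it rests on the precise Fourier-transform identity for these Hermite-type functions rather than on any soft argument.

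Finally I would reassemble the pieces. The residual sum over the $K$-coordinates, together with the leftover linear-in-$\lambda$ terms in the exponent, is recognized as a Siegel theta function for $K$ with characteristics; tracking those linear terms shows the characteristics are $d\mu_K$ and $-c\mu_K$ for $\mu = -z' + \frac{z_{\nu^+}}{2 z_{\nu^+}^2} + \frac{z_{\nu^-}}{2 z_{\nu^-}^2}$, which lies in $z^\perp$ since $(\mu, z) = -1 + \tfrac12 + \tfrac12 = 0$, and that the coset is $K + \pi(\gamma - cz')$. The remaining phases, arising from the quasi-periodicity of the summand and from rewriting $\gamma + L$ in terms of the representatives $\gamma - cz'$, collapse to $e(-(\gamma, z')d + q(z')cd)$. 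Verifying that $\mu_K$ and the coset $\pi(\gamma - cz')$ are the correct data is a careful but routine check using the projection formula for $\lambda_K$ and the definition of $\pi$.
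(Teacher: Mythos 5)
The paper gives no proof of this statement -- it is quoted verbatim as Theorem 5.2 of Borcherds' paper -- and your outline is exactly Borcherds' argument: split $L = K \oplus \IZ\zeta + \IZ z$, note that $z_K = 0$ and $z_{\omega^\pm} = 0$ so only the one-dimensional $z$-direction is affected, and apply Poisson summation there, with the heat operator $\exp(-\Delta/8\pi v)$ guaranteeing that monomial-times-Gaussian goes to monomial-times-Gaussian. Your identification of where the real work lies (the Fourier-transform identity producing $(-2iv)^{-h}(c\overline{\tau}+d)^h$ and the bookkeeping of the phases and characteristics $d\mu_K$, $-c\mu_K$) is accurate, so the proposal is correct and takes essentially the same approach as the cited source.
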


\section{Regularized Theta Lift}

As before let $L$ be an even lattice of signature $(b^+, b^-)$ and $p$ a polynomial on $\IR^{(b^+, b^-)}$ which is homogeneous of degree $\kappa$ in the first $b^+$ variables and independent of the last $b^-$ variables (in fact the results of this and the next two sections can be generalized to include polynomials which are homogeneous also in the negative definite variables). Denote by $\Theta_L(\tau, \nu, p)$ the corresponding theta series. Let $k = \frac{b^+}{2} + \kappa - \frac{b^-}{2}$ and assume $b^+ - b^- \equiv 0 \bmod{2}$. For $\beta \in \Iso(L' / L)$ recall the vector-valued non-holomorphic Eisenstein series $E_{k, \beta}(\tau, s)$ of weight $k$ for the Weil representation $\rho_L$. Then
$$\langle E_{k, \beta}(\tau, s), \Theta_L(\tau, \nu, p) \rangle v^{\frac{b^+}{2} + \kappa}$$
is invariant under $\Mp_2(\IZ)$ in $\tau$. Hence, following \cite{Borcherds}, we define
\begin{align*}
\Phi_{k, \beta}(\nu, p, s, t) = \lim_{T \to \infty} \int_{\calF_T} \langle E_{k, \beta}(\tau, s), \Theta_L(\tau, \nu, p) \rangle v^{\frac{b^+}{2} + \kappa - t} \frac{\mathrm{d}u \mathrm{d}v}{v^2},
\end{align*}
where
$$\calF_T = \{ \tau = u + iv \in \calF \mid v \leq T \}$$
and set
$$\Phi_{k, \beta}(\nu, p, s) := \int_{\calF}^{\text{reg}} \langle E_{k, \beta}(\tau, s), \Theta_L(\tau, \nu, p) \rangle v^{\frac{b^+}{2} + \kappa} \frac{\mathrm{d}u \mathrm{d}v}{v^2} := \CT_{t = 0} \Phi_{k, \beta}(\nu, p, s, t),$$
where $\CT_{t = 0}$ denotes the constant term in the Laurent expansion at $t = 0$ (of the meromorphic continuation). For $\frakv \in \Iso(\IC[L' / L])$ we define $\Phi_{k, \frakv}$ analogously. By linearity of the theta lift, it often suffices to consider $\Phi_{k, \beta}$ for $\beta \in \Iso(L' / L)$ only.

\begin{thm}\label{thm:ThetaLiftPoles}
For an isometry $\nu : L \otimes \IR \to \IR^{b^+, b^-}$ the regularized theta integral converges and defines a holomorphic function for $\Re(t) > \Re(s) > 1$ which has a meromorphic continuation in $t$ to $t = 0$ and to all $s \in \IC$. The possible poles come from the poles of $E_{k, \frakv}(\tau, s)$ except for finitely many simple poles at $s = 1 - \frac{b^+}{2} + n, \frac{b^+}{2} - n, n \in \IN_{> 0}$, which do not occur if $p$ is harmonic of positive degree. Moreover, the functional equation of $E_{k, \frakv}(\tau, s)$ yields a functional equation for the lift $\Phi_{k, \frakv}(\nu, p, s)$, i.e. we have
$$\Phi_{k, \frakv}(\nu, p, s) = \frac{1}{2} \sum_{\alpha \in \Iso(L' / L)} c_{k, \frakv}(\alpha, 0, s) \Phi_{k, \frakv}(\nu, p, 1 - k - s).$$
\end{thm}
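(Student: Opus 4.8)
By linearity of the regularized lift it suffices to treat $\frakv = \frake_\beta$ for $\beta \in \Iso(L'/L)$. The plan is to make the whole analytic structure explicit by unfolding against the Eisenstein series, following the method of \cite{Borcherds}. For $\Re(s) > 1$ the series $E_{k,\beta}(\tau, s)$ converges absolutely, so I may substitute its definition as a sum over $\tilde{\Gamma}_\infty \bs \Mp_2(\IZ)$ and use the transformation formula for $\Theta_L(\tau, \nu, p)$ together with the $\Mp_2(\IZ)$-invariance of the pairing to unfold the integral over $\calF$ to the strip $\tilde{\Gamma}_\infty \bs \IH = \{0 \leq u < 1,\, v > 0\}$, reducing $\Phi_{k,\beta}(\nu, p, s, t)$ to $\int_0^\infty \int_0^1 v^{s + b^+/2 + \kappa - t} \langle \frake_\beta, \Theta_L(\tau, \nu, p) \rangle \, \frac{\mathrm{d}u\,\mathrm{d}v}{v^2}$. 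Convergence for $\Re(t) > \Re(s) > 1$ then comes from two regimes: near $v \to \infty$ the factor $v^{-t}$ with $\Re(t)$ large dominates the constant term of the theta function, while near $v \to 0$ the growth estimate of the preceding lemma, $\Theta_L = O(v^{-b^+/2 - \kappa - b^-/2})$, guarantees integrability.

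Next I would carry out the $u$-integration, which annihilates all Fourier modes except those with $q(\lambda) = 0$, i.e. the isotropic vectors $\lambda \in \beta + L$. Expanding the heat operator $\exp(-\Delta/8\pi v)$ into its finitely many pieces proportional to $\Delta^j p / v^j$ and using $\int_0^\infty v^{\alpha - 1} e^{-2\pi v q_\nu(\lambda)} \, \mathrm{d}v = \Gamma(\alpha)(2\pi q_\nu(\lambda))^{-\alpha}$ for each $\lambda \neq 0$ (where $q_\nu(\lambda) > 0$), the nonzero isotropic vectors contribute a finite sum of products of $\Gamma$-factors with Dirichlet series over the isotropic vectors of $\beta + L$. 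This expression converges for $\Re(s) \gg 0$ and continues meromorphically in both $s$ and $t$; taking $\CT_{t=0}$ yields the meromorphic continuation of $\Phi_{k,\beta}(\nu, p, s)$ to all $s \in \IC$. The poles in $s$ of the Dirichlet series appearing here coincide with those carried by the Fourier coefficients of $E_{k,\beta}(\tau, s)$, which is precisely the assertion that the poles of the lift come from the poles of the Eisenstein series.

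The remaining, finitely many poles are produced by the two sources that are annihilated by harmonicity. First, every contribution with $j \geq 1$ carries a factor $\Delta^j p$, and the corresponding $\Gamma$-factor has its argument shifted by the integer $j$, so its poles fall into finitely many of the progressions $s = 1 - b^+/2 + n$ and $s = b^+/2 - n$ with $n \in \IN_{>0}$. Second, the vector $\lambda = 0$, which occurs exactly when $\beta = 0$, gives $\exp(-\Delta/8\pi v)(p)(0)$, equal to a constant multiple of $v^{-\kappa/2}\,\Delta^{\kappa/2}p$ when $\kappa$ is even and zero otherwise; its regularized Mellin transform adds finitely many simple poles of the same shape. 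If $p$ is harmonic of positive degree, then $\Delta p = 0$ kills all $j \geq 1$ terms and $p(0) = 0$ kills the $\lambda = 0$ term, so these extra poles disappear and only the ones inherited from $E_{k,\beta}$ survive.

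Finally, the functional equation follows formally from the linearity of the regularized theta lift in its input: applying the lift to the functional equation $E_{k,\frakv}(\tau, s) = \frac{1}{2}\sum_{\alpha \in \Iso(L'/L)} c_{k,\frakv}(\alpha, 0, s)\, E_{k,\alpha}(\tau, 1-k-s)$ transfers it termwise to $\Phi_{k,\frakv}(\nu, p, s)$, the interchange being legitimate because both sides are meromorphic in $s$ and agree for $\Re(s) \gg 0$. I expect the main obstacle to be the rigorous justification of the $t$-regularization after unfolding, namely the interchange of the $\CT_{t=0}$ operation with the summation over isotropic vectors and the identification of the Dirichlet-series poles with those of $E_{k,\beta}$; in particular the careful isolation of the $\lambda = 0$ contribution and its regularized Mellin transform is the delicate point, as it is exactly the term responsible for the extra poles.
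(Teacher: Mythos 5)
Your overall strategy diverges from the paper's and, as written, has a genuine gap at its core. The paper does \emph{not} prove Theorem \ref{thm:ThetaLiftPoles} by unfolding against $E_{k,\beta}$. Instead it splits $\calF_T$ into the compact piece $\calF_1$ (harmless, holomorphic in $s$ and $t$) and the rectangle $v\geq 1$, $0\leq u\leq 1$, inserts the Fourier expansions of both $E_{k,\beta}$ and $\Theta_L$, integrates over $u$, and observes that every resulting term except the $\lambda=0$ term carries a factor $\exp(-2\pi v q_\nu(\lambda))$ with $q_\nu(\lambda)>0$ (or the exponential decay of $c_{k,\beta}(\gamma,n,s,v)$ in $n$), so those pieces converge on $v\geq 1$ for \emph{all} $s,t$. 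The entire $t$-regularization and all the extra poles are thereby concentrated in the two explicit integrals coming from $c_{k,\beta}(0,0,s,v)=2\delta_{\beta,0}v^s+c_{k,\beta}(0,0,s)v^{1-s-k}$ paired with $\exp(-\Delta/8\pi v)(\overline{p})(0)$; these evaluate to \emph{finite} sums of terms like $(\Delta^n\overline{p})(0)/\bigl((-8\pi)^n(t-s-\tfrac{b^+}{2}+1+n)n!\bigr)$, which are visibly meromorphic with exactly the stated simple poles and vanish when $p$ is harmonic of positive degree.

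Your route instead unfolds and lands on the expression of Theorem \ref{thm:UnfoldingAgainstEisenstein}: a sum over the nonzero isotropic vectors $\lambda\in\beta+L$ of $\Gamma$-factors times $q_\nu(\lambda)^{-s-\cdots}$. You then assert that this continues meromorphically because ``the poles of the Dirichlet series coincide with those of the Fourier coefficients of $E_{k,\beta}$.'' That is the gap: the sum over isotropic $\lambda$ is an Epstein/Eisenstein-type series over the isotropic cone --- for signature $(2,l)$ it \emph{is} the orthogonal Eisenstein series $\calE_{\kappa,z}$ up to an explicit factor --- and its meromorphic continuation is precisely what the whole construction is designed to prove; asserting it here is circular. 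Two smaller problems: (i) the unfolding is not legitimate for $\Re(s)>1$, since near $v\to 0$ the bound $\theta_\beta=O(v^{-b^+/2-\kappa-b^-/2})$ forces $\Re(s)>1+\tfrac{b^-}{2}$, and moreover $v^{-t}$ is not $\Mp_2(\IZ)$-invariant, so the regularized integral does not unfold termwise --- the paper only unfolds the non-identity cosets after checking absolute convergence at $t=0$ for $\Re(s)\gg 0$. (ii) Your attribution of the extra poles partly to the $j\geq 1$ Gamma factors is off: in the correct analysis they come solely from the $\lambda=0$ Fourier term, which contributes for \emph{every} $\beta$ through $c_{k,\beta}(0,0,s)v^{1-s-k}$, not only for $\beta=0$. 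The functional-equation step at the end is fine and matches the paper.
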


\begin{proof}
We have
\begin{align*}
\Phi_{k, \beta}(\nu, p, s, t)
&= \lim_{T \to \infty} \int_{\calF_T} \langle E_{k, \beta}(\tau, s), \Theta_L(\tau, \nu, p) \rangle v^{\frac{b^+}{2} + \kappa - t} \frac{\mathrm{d}u \mathrm{d}v}{v^2} \\
&= \int_{\calF_1} \langle E_{k, \beta}(\tau, s), \Theta_L(\tau, \nu, p) \rangle v^{\frac{b^+}{2} + \kappa - t} \frac{\mathrm{d}u \mathrm{d}v}{v^2} \\
&+ \int_{v = 1}^\infty \int_{u = 0}^1 \langle E_{k, \beta}(\tau, s), \Theta_L(\tau, \nu, p) \rangle v^{\frac{b^+}{2} + \kappa - t} \frac{\mathrm{d}u \mathrm{d}v}{v^2}.
\end{align*}
Since $\calF_1$ is compact and the integrand is holomorphic in $s$ and $t$, the first integral converges and defines a holomorphic function for all $s, t \in \IC$, where $E_{k, \beta}$ has no pole. Therefore it is sufficient to consider the second summand, i.e.
$$\varphi(\nu, p, s, t) = \int_{v = 1}^\infty \int_{u = 0}^1 \langle E_{k, \beta}(\tau, s), \Theta_L(\tau, \nu, p) \rangle v^{\frac{b^+}{2} + \kappa - t} \frac{\mathrm{d}u \mathrm{d}v}{v^2}.$$
Inserting the Fourier coefficients of $E_{k, \beta}$ and $\Theta_L$ yields
\begin{align*}
\varphi(\nu, p, s, t)
&= \int_{v = 1}^\infty \int_{u = 0}^1 \sum_{\lambda \in L'} \sum_{n \in \IZ + q(\lambda)} c_{k, \beta}(\lambda, n, s, v) \exp\left(-\frac{\Delta}{8 \pi v}\right)(\overline{p})(\nu(\lambda)) \\
&\times \exp(-2 \pi v q_\nu(\lambda)) e\left((n - q(\lambda)) u\right) v^{\frac{b^+}{2} + \kappa - t} \frac{\mathrm{d}u \mathrm{d}v}{v^2} \\
&= \int_{v = 1}^\infty \int_{u = 0}^1 \sum_{\lambda \in L'} \sum_{n \in \IZ} c_{k, \beta}(\lambda, n + q(\lambda), s, v) \exp\left(-\frac{\Delta}{8 \pi v}\right)(\overline{p})(\nu(\lambda)) \\
&\times \exp(-2 \pi v q_\nu(\lambda)) e\left(n u\right) v^{\frac{b^+}{2} + \kappa - t} \frac{\mathrm{d}u \mathrm{d}v}{v^2}.
\end{align*}
Carrying out the integration over $u$ yields
\begin{align*}
& \int_{v = 1}^\infty \sum_{\lambda \in L'} c_{k, \beta}(\lambda, q(\lambda), s, v) \exp\left(-\frac{\Delta}{8 \pi v}\right)(\overline{p})(\nu(\lambda)) \exp(-2 \pi v q_\nu(\lambda)) v^{\frac{b^+}{2} + \kappa - t} \frac{\mathrm{d}v}{v^2} \\
&= \int_{v = 1}^\infty c_{k, \beta}(0, 0, s, v) \exp\left(-\frac{\Delta}{8 \pi v}\right)(\overline{p})(0) v^{\frac{b^+}{2} + \kappa - t} \frac{\mathrm{d}v}{v^2} \\
&+ \int_{v = 1}^\infty \sum_{\substack{\lambda \in L' \setminus \{0\} \\ q(\lambda) = 0}} c_{k, \beta}(\lambda, 0, s, v) \exp\left(-\frac{\Delta}{8 \pi v}\right)(\overline{p})(\nu(\lambda)) \exp(-2 \pi v q_\nu(\lambda)) v^{\frac{b^+}{2} + \kappa - t} \frac{\mathrm{d}v}{v^2} \\
&+ \int_{v = 1}^\infty \sum_{\substack{\lambda \in L' \setminus \{0\} \\ q(\lambda) \neq 0}} c_{k, \beta}(\lambda, q(\lambda), s, v) \exp\left(-\frac{\Delta}{8 \pi v}\right)(\overline{p})(\nu(\lambda)) \exp(-2 \pi v q_\nu(\lambda)) v^{\frac{b^+}{2} + \kappa - t} \frac{\mathrm{d}v}{v^2}.
\end{align*}
Inserting
$$c_{k, \beta}(\lambda, 0, s, v) := (\delta_{\beta, \lambda} + (-1)^\kappa \delta_{-\beta,\lambda}) v^s + c_{k, \beta}(\lambda, 0, s) v^{1 - s - k}$$
yields for $\varphi(\nu, p, s, t)$
\begin{align*}
& 2 \delta_{\beta, 0} \int_{v = 1}^\infty \exp\left(-\frac{\Delta}{8 \pi v}\right)(\overline{p})(0) v^{\frac{b^+}{2} + \kappa + s - 2 - t} \mathrm{d}v \\
&+ c_{k, \beta}(0, 0, s) \int_{v = 1}^\infty \exp\left(-\frac{\Delta}{8 \pi v}\right)(\overline{p})(0) v^{\frac{b^+}{2} + \kappa - s - k - 1 - t} \mathrm{d}v \\
&+ \int_{v = 1}^\infty \sum_{\substack{\lambda \in \pm \beta + L \setminus \{0\} \\ q(\lambda) = 0}} \exp\left(-\frac{\Delta}{8 \pi v}\right)(\overline{p})(\nu(\lambda)) \exp(-2 \pi v q_\nu(\lambda)) v^{\frac{b^+}{2} + \kappa + s - 2 - t} \mathrm{d}v \\
&+ \int_{v = 1}^\infty \sum_{\substack{\lambda \in L' \setminus \{0\} \\ q(\lambda) = 0}} c_{k, \beta}(\lambda, 0, s) \exp\left(-\frac{\Delta}{8 \pi v}\right)(\overline{p})(\nu(\lambda)) \exp(-2 \pi v q_\nu(\lambda)) v^{\frac{b^+}{2} + \kappa - s - k - 1 - t} \mathrm{d}v \\
&+ \int_{v = 1}^\infty \sum_{\substack{\lambda \in L' \setminus \{0\} \\ q(\lambda) \neq 0}} c_{k, \beta}(\lambda, q(\lambda), s, v) \exp\left(-\frac{\Delta}{8 \pi v}\right)(\overline{p})(\nu(\lambda)) \exp(-2 \pi v q_\nu(\lambda)) v^{\frac{b^+}{2} + \kappa - 2 - t} \mathrm{d}v.
\end{align*}
The first integral converges for $\Re(s) < 1 - \frac{b^+}{2} + \Re(t)$ and is equal to
$$\sum_{n = 0}^\infty \frac{(\Delta^n \overline{p})(0)}{(-8 \pi)^n (t - s - \frac{b^+}{2} + 1 + n) n!}.$$
Similarly, the second integral converges for $\Re(s) > \frac{b^+}{2} - Re(t)$ and is equal to
$$\sum_{n = 0}^\infty \frac{(\Delta^n \overline{p})(0)}{(-8 \pi)^n (s - \frac{b^+}{2} + t + n) n!}.$$
Both are finite sums and define meromorphic functions in $s$ and $t$. The possible poles match the poles described in the theorem. The sum in the third integral is a subseries of a theta function attached to the positive definite majorant $q_\nu$ and thus the integral converges for all $s, t \in \IC$. The convergence of the last two integrals follows from the asymptotics
$$c_{k, \beta}(\gamma, n, s, v) = O(e^{-2 \pi \lvert n \rvert})$$
for $v \geq 1$ as $n \to \pm\infty$.
\end{proof}

\section{Unfolding Against $E_{k, \beta}(\tau, s)$}

We will now calculate the theta lift $\Phi_{k, \beta}(\nu, p, s)$ by unfolding against the Eisenstein series. Let $N \in \IN$ and $c \in \IZ / N \IZ$. Define the \emph{modified zeta functions}
$$\zeta_+^c(s) := \sum_{\substack{n \equiv c \bmod N \\ n > 0}} \frac{1}{n^s}, \quad \zeta^c(s) := \sum_{\substack{n \equiv c \bmod N \\ n \neq 0}} \frac{1}{n^s}.$$

\begin{thm}\label{thm:UnfoldingAgainstEisenstein}
The theta lift ist given by
$$\Phi_{k, \beta}(\nu, p, s) = 2 \sum_{\substack{\lambda \in \beta + L \\ q(\lambda) = 0 \\ \lambda \neq 0}} \sum_{n = 0}^\infty \frac{\Delta^n \overline{p}(\nu(\lambda))}{(-8 \pi)^n n!} \frac{\Gamma(s + \frac{b^+}{2} + \kappa - 1 - n)}{(2 \pi q_\nu(\lambda))^{s + \frac{b^+}{2} + \kappa - 1 - n}}.$$
The series converges for $\Re(s) \gg 0$. We can rewrite this to
\begin{align*}
\Phi_{k, \beta}(\nu, p, s)
&= 2 \sum_{\lambda \in \Iso_0(L')} \zeta_+^{k_{\lambda\beta}}(2s + b^+ + \kappa - 2) \sum_{j = 0}^\infty \frac{\Delta^j \overline{p}(\nu(\lambda))}{(-8\pi)^j j!} \frac{\Gamma(s + \frac{b^+}{2} + \kappa - 1 - j)}{(2 \pi q_\nu(\lambda))^{s + \frac{b^+}{2} + \kappa - 1 - j}},
\end{align*}
where for $\lambda \in \Iso_0(L')$ we let $k_{\lambda \beta} \in \IZ / N_\lambda \IZ$ with $k_{\lambda \beta} \lambda \in \beta + L$ and the summands with $\beta + L \cap \IZ \lambda = \emptyset$ are meant to be zero.
\end{thm}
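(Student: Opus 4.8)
The plan is to evaluate $\Phi_{k,\beta}(\nu,p,s)$ by the classical unfolding trick, working first in a region $\Re(s),\Re(t)\gg0$ where the Eisenstein series converges absolutely and the theta integral converges, and then transporting the resulting identity to $t=0$ and to general $s$ via the meromorphic continuation already established in Theorem~\ref{thm:ThetaLiftPoles}. First I would insert the coset sum $E_{k,\beta}(\tau,s)=\tfrac12\sum_{M\in\tilde\Gamma_\infty\bs\Mp_2(\IZ)}v^s\frake_\beta\vert_{k,L}M$ into $\Phi_{k,\beta}(\nu,p,s,t)$ and unfold the integral over $\calF$ to the strip $\tilde\Gamma_\infty\bs\IH=\{0\le u<1,\,v>0\}$, using the $\Mp_2(\IZ)$-invariance of $\langle E_{k,\beta},\Theta_L\rangle v^{b^+/2+\kappa}\,d\mu$. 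The delicate point is the precise constant: since $\tilde\Gamma_\infty$ misses the center $\langle Z\rangle$, the coset space covers $\overline\Gamma_\infty\bs\mathrm{PSL}_2(\IZ)$ fourfold, and a short computation gives $v^s\frake_\beta\vert_{k,L}Z=(-1)^\kappa v^s\frake_{-\beta}$. Together with the symmetry $\theta_{-\beta}(\tau,\nu,p)=(-1)^\kappa\theta_\beta(\tau,\nu,p)$ (from $\lambda\mapsto-\lambda$ and homogeneity of $p$) and the prefactor $\tfrac12$, the four central translates collapse to
$$\Phi_{k,\beta}(\nu,p,s,t)=2\int_{v=0}^\infty\int_{u=0}^1 v^s\,\overline{\theta_\beta(\tau,\nu,p)}\,v^{b^+/2+\kappa-t}\frac{du\,dv}{v^2},$$
which accounts for the factor $2$ in the statement.

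Next I would carry out the $u$-integration. Since $\langle\frake_\beta,\Theta_L\rangle=\overline{\theta_\beta}$ and every $\lambda\in\beta+L$ satisfies $q(\lambda)\in\IZ$, the integral $\int_0^1 e(-q(\lambda)u)\,du$ equals $\delta_{q(\lambda),0}$ and selects precisely the isotropic vectors, leaving
$$\Phi_{k,\beta}(\nu,p,s,t)=2\int_{v=0}^\infty\sum_{\substack{\lambda\in\beta+L\\ q(\lambda)=0}}\exp\!\Big(-\frac{\Delta}{8\pi v}\Big)(\overline p)(\nu(\lambda))\,e^{-2\pi v q_\nu(\lambda)}\,v^{s+b^+/2+\kappa-t-2}\,dv.$$
The term $\lambda=0$ (present only when $\beta=0$) has $q_\nu(0)=0$, so it contributes a pure power integral $\int_0^\infty v^{a-t}\tfrac{dv}{v}$; splitting at $v=1$ and continuing each half separately shows its constant term at $t=0$ vanishes (and in any case $\exp(-\Delta/8\pi v)(\overline p)(0)$ only has its degree-$\kappa/2$ piece nonzero), which is exactly why the statement sums over $\lambda\neq0$. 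For $\lambda\neq0$ one has $q_\nu(\lambda)>0$, so expanding $\exp(-\Delta/8\pi v)(\overline p)(\nu(\lambda))=\sum_n\frac{(\Delta^n\overline p)(\nu(\lambda))}{(-8\pi v)^n n!}$ (a finite sum) and evaluating the Gamma integrals $\int_0^\infty v^{s+b^+/2+\kappa-1-n-t}e^{-2\pi v q_\nu(\lambda)}\tfrac{dv}{v}=\Gamma(s+\tfrac{b^+}{2}+\kappa-1-n-t)(2\pi q_\nu(\lambda))^{-(s+b^+/2+\kappa-1-n-t)}$ gives, at $t=0$, the first displayed formula; convergence for $\Re(s)\gg0$ follows from positive-definiteness of $q_\nu$, the sum being dominated by an Epstein-type zeta.

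For the second formula I would reindex the nonzero isotropic $\lambda\in L'$ by writing $\lambda=m\lambda_0$ with $\lambda_0\in\Iso_0(L')$ primitive and $m\in\IZ_{>0}$, which is a bijection once both signs of $\lambda_0$ are allowed. The membership $\lambda\in\beta+L$ becomes $m\equiv k_{\lambda_0\beta}\bmod N_{\lambda_0}$, while homogeneity yields $q_\nu(m\lambda_0)=m^2q_\nu(\lambda_0)$ and $(\Delta^j\overline p)(\nu(m\lambda_0))=m^{\kappa-2j}(\Delta^j\overline p)(\nu(\lambda_0))$. The powers of $m$ combine to $m^{-(2s+b^++\kappa-2)}$, so the sum over $m$ produces the modified zeta function $\zeta_+^{k_{\lambda_0\beta}}(2s+b^++\kappa-2)$ and delivers the stated expression after renaming $\lambda_0\to\lambda$ and $n\to j$.

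I expect the main obstacle to be the first step rather than the later calculations: one must pin down the unfolding constant (the combined effect of the central translates and the $(-1)^\kappa$ symmetry) and, more importantly, justify that the unfolding commutes with the regularization $\CT_{t=0}$. This is why it is cleanest to perform the unfolding in the range $\Re(s),\Re(t)\gg0$, where absolute convergence legitimizes interchanging the coset sum with the integral and isolating the $\lambda=0$ contribution, and then to pass to $t=0$ and arbitrary $s$ through the meromorphic continuation supplied by Theorem~\ref{thm:ThetaLiftPoles}.
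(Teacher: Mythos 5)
Your proposal is correct and follows essentially the same route as the paper: unfold against the Eisenstein series (the paper phrases this by moving the slash onto $\Theta_L$ via the theta transformation formula and splitting the $c=0$ and $c\neq 0$ cosets over $\Gamma_\infty\bs\SL_2(\IZ)$, which yields the same overall factor $2$ you obtain from the central elements), integrate over $u$ to isolate isotropic $\lambda$, cancel the $\lambda=0$ term by pairing the regularized $v\geq 1$ piece against the convergent $v\leq 1$ piece, evaluate the Gamma integrals, and reindex by primitive isotropic vectors to produce $\zeta_+^{k_{\lambda\beta}}$. Your intermediate display with $v^{-t}$ spread over the whole strip is not literally an identity (the regularizer is not $\Mp_2(\IZ)$-invariant, and the $\lambda=0$ integral diverges for every $t$), but the repair you describe --- splitting at $v=1$ and continuing each half, with the $c\neq 0$ part absolutely convergent at $t=0$ for $\Re(s)\gg 0$ --- is exactly the paper's $\calF_1\cup\calG$ decomposition.
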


\begin{proof}
Using the theta transformation formula we see that $\Phi_{k, \beta}(\nu, p, s, t)$ is given by
\begin{align*}
& \lim_{T \to \infty} \int_{\calF_T} \langle E_{k, \beta}(\tau, s), \Theta_L(\tau, \nu, p) \rangle v^{\frac{b^+}{2} + \kappa - t} \frac{\mathrm{d}u \mathrm{d}v}{v^2} \\
&= \frac{1}{2} \lim_{T \to \infty} \int_{\calF_T} \sum_{(M, \phi) \in \tilde{\Gamma}_\infty \bs \Mp_2(\IZ)} \langle (\frake_\beta v^s)\mid_{k, L}(M, \phi), \Theta_L(\tau, \nu, p) \rangle v^{\frac{b^+}{2} + \kappa - t} \frac{\mathrm{d}u \mathrm{d}v}{v^2} \\
&= \frac{1}{2} \lim_{T \to \infty} \int_{\calF_T} \sum_{(M, \phi) \in \tilde{\Gamma}_\infty \bs \Mp_2(\IZ)} \langle \phi(\tau)^{-k} \rho_L(M, \phi)^{-1} \frake_\beta \Im(M\tau)^s, \Theta_L(\tau, \nu, p) \rangle v^{\frac{b^+}{2} + \kappa - t} \frac{\mathrm{d}u \mathrm{d}v}{v^2} \\
&= \frac{1}{2} \lim_{T \to \infty} \int_{\calF_T} \sum_{(M, \phi) \in \tilde{\Gamma}_\infty \bs \Mp_2(\IZ)} \langle \frake_\beta \Im(M\tau)^s, \overline{\phi(\tau)}^{-k}\rho_L(M, \phi)\Theta_L(\tau, \nu, p) \rangle v^{\frac{b^+}{2} + \kappa - t} \frac{\mathrm{d}u \mathrm{d}v}{v^2} \\
&= \lim_{T \to \infty} \int_{\calF_T} \sum_{M \in \Gamma_\infty \bs \SL_2(\IZ)} \langle \frake_\beta, \Theta_L(M \tau, \nu, p) \rangle \Im(M \tau)^{\frac{b^+}{2} + \kappa + s} v^{-t}\frac{\mathrm{d}u \mathrm{d}v}{v^2} \\
&= \lim_{T \to \infty} \int_{\calF_T} \sum_{M \in \Gamma_\infty \bs \SL_2(\IZ)} \overline{\theta_\beta(M \tau, \nu, p)} \Im(M\tau)^{\frac{b^+}{2} + \kappa + s} v^{-t}\frac{\mathrm{d}u \mathrm{d}v}{v^2} \\
&= 2\lim_{T \to \infty} \int_{\calF_T} \overline{\theta_\beta(\tau, \nu, p)} v^{\frac{b^+}{2} + \kappa + s - t}\frac{\mathrm{d}u \mathrm{d}v}{v^2} \\
&+ \lim_{T \to \infty} \int_{\calF_T} \sum_{\substack{M = \left(\begin{smallmatrix}a & b \\ c & d\end{smallmatrix}\right) \in \Gamma_\infty \bs \SL_2(\IZ) \\ c \neq 0}} \overline{\theta_\beta(M \tau, \nu, p)} \Im(M\tau)^{\frac{b^+}{2} + \kappa + s} v^{-t}\frac{\mathrm{d}u \mathrm{d}v}{v^2}.
\end{align*}
By the growth of the theta function, the integral
$$\int_{\calG} \theta_\beta(\tau, \nu, p) v^{s + \frac{b^+}{2} + \kappa} \frac{\mathrm{d}u \mathrm{d}v}{v^2},$$
where
$$\calG = \{\tau = u + iv \in \IH \mid \lvert u \rvert \leq \frac{1}{2}, \lvert \tau \rvert \leq 1 \},$$
converges absolutely for $\Re(s) \gg 0$. Thus we can set $t = 0$, take the limit $T \to \infty$ and unfold the second summand to obtain for the constant term at $t = 0$
\begin{align*}
& \lim_{T \to \infty} \int_{\calF_T} \sum_{\substack{M = \left(\begin{smallmatrix}a & b \\ c & d\end{smallmatrix}\right) \in \Gamma_\infty \bs \SL_2(\IZ) \\ c \neq 0}} \overline{\theta_\beta(M \tau, \nu, p)} \Im(M\tau)^{\frac{b^+}{2} + \kappa + s} v^{-t}\frac{\mathrm{d}u \mathrm{d}v}{v^2} \\
&= \sum_{\substack{M = \left(\begin{smallmatrix}a & b \\ c & d\end{smallmatrix}\right) \in \Gamma_\infty \bs \SL_2(\IZ) \\ c \neq 0}} \int_{\calF} \overline{\theta_\beta(M \tau, \nu, p)} \Im(M\tau)^{\frac{b^+}{2} + \kappa + s} v^{-t}\frac{\mathrm{d}u \mathrm{d}v}{v^2} \\
&= 2 \int_{\calG} \overline{\theta_\beta(\tau, \nu, s)} v^{s + \frac{b^+}{2} + \kappa} \frac{\mathrm{d}u \mathrm{d}v}{v^2}.
\end{align*}
In the first summand we cut off $\calF_1$ and add it to the second integral to obtain
\begin{align*}
2 \CT_{t = 0} \int_{v = 1}^\infty \int_{u = 0}^1 \overline{\theta_\beta(\tau, \nu, s)} v^{s + \frac{b^+}{2} + \kappa - t} \frac{\mathrm{d}u \mathrm{d}v}{v^2} + 2 \int_{v = 0}^1 \int_{u = 0}^1 \overline{\theta_\beta(\tau, \nu, s)} v^{s + \frac{b^+}{2} + \kappa} \frac{\mathrm{d}u \mathrm{d}v}{v^2}.
\end{align*}
Now we insert the Fourier expansion
\begin{align*}
\theta_\beta(\tau, \nu, p) = \sum_{\lambda \in \beta + L} \exp\left(-\frac{\Delta}{8 \pi v}\right)(p)(\nu(\lambda)) \exp(-2 \pi v q_\nu(\lambda)) e(q(\lambda)u)
\end{align*}
which leads, by evaluating the integral over $u$ and using $q(\beta) \in \IZ$, to
\begin{align*}
& 2 \CT_{t = 0} \int_{v = 1}^\infty \int_{u = 0}^1 \sum_{\lambda \in \beta + L} \exp\left(-\frac{\Delta}{8 \pi v}\right)(\overline{p})(\nu(\lambda)) \\
&\exp(-2 \pi v q_\nu(\lambda)) e(-q(\lambda)u) v^{s + \frac{b^+}{2} + \kappa - t} \frac{\mathrm{d}u \mathrm{d}v}{v^2} \\
&+ 2 \int_{v = 0}^1 \int_{u = 0}^1 \sum_{\lambda \in \beta + L} \exp\left(-\frac{\Delta}{8 \pi v}\right)(\overline{p})(\nu(\lambda)) \\
&\exp(-2 \pi v q_\nu(\lambda)) e(-q(\lambda)u) v^{s + \frac{b^+}{2} + \kappa} \frac{\mathrm{d}u \mathrm{d}v}{v^2} \\
&= 2 \CT_{t = 0} \int_{v = 1}^\infty \sum_{\substack{\lambda \in \beta + L \\ q(\lambda) = 0}} \exp\left(-\frac{\Delta}{8 \pi v}\right)(\overline{p})(\nu(\lambda)) \exp(-2 \pi v q_\nu(\lambda)) v^{s + \frac{b^+}{2} + \kappa - t} \frac{\mathrm{d}v}{v^2} \\
&+ 2 \int_{v = 0}^1 \sum_{\substack{\lambda \in \beta + L \\ q(\lambda) = 0}} \exp\left(-\frac{\Delta}{8 \pi v}\right)(\overline{p})(\nu(\lambda)) \exp(-2 \pi v q_\nu(\lambda)) v^{s + \frac{b^+}{2} + \kappa} \frac{\mathrm{d}v}{v^2}.
\end{align*}
Observe that for the $\lambda = 0$ term we have
\begin{align*}
& 2 \CT_{t = 0} \int_{v = 1}^\infty \exp\left(-\frac{\Delta}{8 \pi v}\right)(\overline{p})(0) v^{s + \frac{b^+}{2} + \kappa - t} \frac{\mathrm{d}u \mathrm{d}v}{v^2} \\
&= -2 \int_{v = 0}^1 \exp\left(-\frac{\Delta}{8 \pi v}\right)(\overline{p})(0) v^{s + \frac{b^+}{2} + \kappa} \frac{\mathrm{d}v}{v^2}
\end{align*}
and the rest of the terms converge for $t = 0$. Hence we obtain
\begin{align*}
\Phi_{k, \beta}(\nu, p, s)
&= 2 \sum_{\substack{\lambda \in \beta + L \\ q(\lambda) = 0 \\ \lambda \neq 0}} \int_{v = 0}^\infty \exp\left(-\frac{\Delta}{8 \pi v}\right)(\overline{p})(\nu(\lambda)) \exp(-2 \pi v q_\nu(\lambda)) v^{s + \frac{b^+}{2} + \kappa} \frac{\mathrm{d}v}{v^2} \\
&= 2 \sum_{\substack{\lambda \in \beta + L \\ q(\lambda) = 0 \\ \lambda \neq 0}} \sum_{j = 0}^\infty \frac{\Delta^j \overline{p}(\nu(\lambda))}{(-8\pi)^j j!} \int_{v = 0}^\infty \exp(-2 \pi v q_\nu(\lambda)) v^{s + \frac{b^+}{2} + \kappa - j} \frac{\mathrm{d}v}{v^2} \\
&= 2 \sum_{\substack{\lambda \in \beta + L \\ q(\lambda) = 0 \\ \lambda \neq 0}} \sum_{j = 0}^\infty \frac{\Delta^j \overline{p}(\nu(\lambda))}{(-8\pi)^j j!} \frac{\Gamma(s + \frac{b^+}{2} + \kappa - 1 - j)}{(2 \pi q_\nu(\lambda))^{s + \frac{b^+}{2} + \kappa - 1 - j}}.
\end{align*}
For $\lambda \in \Iso_0(L')$ let $k_{\lambda \beta} \in \IZ / N_\lambda \IZ$ with $k_{\lambda \beta} \lambda \in \beta + L$. Then we have
\begin{align*}
\Phi_{k, \beta}(\nu, p, s)
&= 2 \sum_{\lambda \in \Iso_0(L')} \sum_{\substack{n = 1 \\ n \equiv k_{\lambda \beta}}}^\infty \sum_{j = 0}^\infty \frac{\Delta^j \overline{p}(\nu(n\lambda))}{(-8\pi)^j j!} \frac{\Gamma(s + \frac{b^+}{2} + \kappa - 1 - j)}{(2 \pi q_\nu(n\lambda))^{s + \frac{b^+}{2} + \kappa - 1 - j}} \\
&= 2 \sum_{\lambda \in \Iso_0(L')} \sum_{\substack{n = 1 \\ n \equiv k_{\lambda \beta}}}^\infty \sum_{j = 0}^\infty \frac{\Delta^j \overline{p}(\nu(\lambda))}{(-8\pi)^j j!} \frac{\Gamma(s + \frac{b^+}{2} + \kappa - 1 - j)}{(2 \pi q_\nu(\lambda))^{s + \frac{b^+}{2} + \kappa - 1 - j}}  \frac{1}{n^{2s + b^+ + \kappa - 2}} \\
&= 2 \sum_{\lambda \in \Iso_0(L')} \zeta_+^{k_{\lambda\beta}}(2s + b^+ + \kappa - 2) \sum_{j = 0}^\infty \frac{\Delta^j \overline{p}(\nu(\lambda))}{(-8\pi)^j j!} \frac{\Gamma(s + \frac{b^+}{2} + \kappa - 1 - j)}{(2 \pi q_\nu(\lambda))^{s + \frac{b^+}{2} + \kappa - 1 - j}},
\end{align*}
where the summands with $\beta + L \cap \IZ \lambda = \emptyset$ are meant to be zero.
\end{proof}

\section{Unfolding Against $\Theta_L(\tau, \nu, p)$}\label{sec:UnfoldingAgainstTheta}

Let $f : \IH \to \IC[L' / L]$ be a vector-valued modular function of weight $k$ with respect to the Weil representation. Define
$$f^K(\tau, r, t) := \sum_{\gamma \in K' / K} f^K_{\gamma + K}(\tau, r, t) \frake_\gamma,$$
where
$$f^K_{\gamma + K}(\tau, r, t) := \sum_{\substack{\lambda \in L_0' / L \\ \pi(\lambda) = \gamma}} e(-r(\lambda, z') + rtq(z')) f_{\lambda+tz'+L}(\tau).$$
Then
\begin{lem}[{\cite[Theorem 5.3]{Borcherds}}]
For $(M, \phi) \in \Mp_2(\IZ)$ with $\left(\begin{smallmatrix}a & b \\ c & d\end{smallmatrix}\right)$ we have
$$f^K(M\tau, ar+bt, cr+dt) = \phi(\tau)^{2k} \rho_K(M, \phi) F_K(\tau, r, t).$$
\end{lem}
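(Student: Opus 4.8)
The plan is to verify the transformation law on generators of $\Mp_2(\IZ)$ and extend it by multiplicativity. Since $\Mp_2(\IZ) = \langle S, T \rangle$ and the right-hand side is assembled from the representation $\rho_K$, the automorphy factor $\phi(\tau)^{2k}$, and the affine action $(r,t) \mapsto (ar + bt, cr + dt)$ of $\SL_2(\IZ)$ on the parameters --- all of which compose consistently --- it suffices to establish the two identities $f^K(\tau + 1, r + t, t) = \rho_K(T) f^K(\tau, r, t)$ and $f^K(S\tau, -t, r) = \tau^k \rho_K(S) f^K(\tau, r, t)$, and then to check that they glue on products (respecting $S^2 = (ST)^3 = Z$), so that the formula descends to all of $\Mp_2(\IZ)$. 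Throughout I would use that $f$ itself satisfies $f(\tau + 1) = \rho_L(T) f(\tau)$ and $f(S\tau) = \tau^k \rho_L(S) f(\tau)$, which are just the $T$- and $S$-instances of its modularity.

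For the $T$-identity I would insert $f_{\lambda + tz' + L}(\tau + 1) = e(q(\lambda + tz')) f_{\lambda + tz' + L}(\tau)$ into the definition of $f^K_{\gamma + K}$ and compare exponential phases summand by summand. The point of the parameter shift $r \mapsto r + t$, together with the built-in phase $e(rt q(z'))$, is to absorb exactly the extra factor produced by translating the $z'$-direction: expanding $q(\lambda + tz') = q(\lambda) + t(\lambda, z') + t^2 q(z')$ and using the compatibility of $q$ on $L_0'/L$ with $q$ on $K'/K$ under $\pi$ (available because $(\lambda, z) \equiv 0 \bmod N_z$ for $\lambda \in L_0'$ and $r, t \in \IZ$), the $\lambda$-dependent phase should collapse and leave the constant factor $e(q(\gamma))$, which is precisely the eigenvalue of $\rho_K(T)$ on $\frake_\gamma$. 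This step is purely bookkeeping.

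The $S$-identity is the substantial part. Substituting $f(S\tau) = \tau^k \rho_L(S) f(\tau)$ turns each component into $\tau^k$ times the finite Fourier transform $\frac{\sqrt{i}^{\,b^- - b^+}}{\sqrt{\lvert L'/L \rvert}} \sum_{\delta \in L'/L} e(-(\lambda + tz', \delta)) f_{\delta + L}(\tau)$. I would then reorganize the resulting double sum --- over $\lambda$ with $\pi(\lambda) = \gamma$ and over $\delta \in L'/L$ --- according to the filtration $L \subseteq L_0' \subseteq L'$ indexed by the residue $(\cdot, z) \bmod N_z$, and carry out the inner summation over the rank-two directions $\IZ z + \IZ \zeta$ by Poisson summation and Gauss-sum evaluation. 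The surviving sum is exactly the finite Fourier transform over $K'/K$ that defines $\rho_K(S)$, and the swap $(r, t) \mapsto (-t, r)$ should emerge from dualizing the $z'$-direction against the $z$-direction. The normalizations match because $K$ has signature $(b^+ - 1, b^- - 1)$, so the eighth-root factor $\sqrt{i}^{\,b^- - b^+}$ is unchanged, while the index relation between the discriminant groups of $L$ and $K$ (a factor $N_z^2$) converts $\sqrt{\lvert L'/L \rvert}$ into $\sqrt{\lvert K'/K \rvert}$ up to the contribution cancelled by the rank-two Gauss sum.

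The main obstacle is precisely this $S$-computation: performing the Poisson summation in the $\IZ z + \IZ \zeta$ direction and tracking every Gauss sum and root of unity so that they assemble cleanly into the correct $\rho_K(S)$-normalization and yield the exact parameter swap $(-t, r)$, rather than a version off by a character or by a root of unity. The reduction to generators and the $T$-case are comparatively mechanical.
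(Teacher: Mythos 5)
The paper offers no proof of this lemma---it is quoted directly from Borcherds' Theorem~5.3, whose own proof is precisely the generator check you describe---so your proposal matches the intended argument. Two small remarks: once the identity is verified for $S$ and $T$ you are finished, because both sides are multiplicative in $(M,\phi)$; there is nothing extra to ``glue'' or to check against the relations $S^2=(ST)^3=Z$. Also, the $S$-step needs no genuine Poisson summation over an infinite lattice (that is already absorbed into the hypothesis that $f$ is modular of type $\rho_L$): what remains is a finite character sum over the order-$N_z$ fiber of $L_0'/L\to K'/K$, which, together with $\lvert L'/L\rvert=N_z^2\,\lvert K'/K\rvert$, produces the $\rho_K(S)$-normalization and restricts the dual sum to the coset $\{\delta\in L'/L \mid (\delta,z)\equiv t \bmod N_z\}=L_0'/L+tz'$, whence the parameter swap $(r,t)\mapsto(-t,r)$.
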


Following Borcherds we have the following

\begin{thm}[{\cite[Theorem 7.1]{Borcherds}}]\label{thm:UnfoldingAgainstTheta}
The function $\Phi_{k, \beta}(\nu, p, s)$ is equal to the constant term at $t = 0$ of
\begin{align*}
&\frac{1}{\sqrt{2} \lvert z_{\nu^+} \rvert} \Phi^K_{k, \beta}(w, p_{w,0}, s) + \frac{\sqrt{2}}{\lvert z_{\nu^+} \rvert} \sum_{h} (2 i)^{-h} \\
&\times \sum_{\lambda \in K'} \sum_{\substack{\delta \in L_0' / L \\ \pi(\delta) = \lambda + K}} \sum_{j = 0}^{\infty} \frac{(-\Delta)^j(\overline{p}_{w,h})(w(\lambda))}{(8\pi)^j j!} \sum_{n = 1}^\infty n^{h} e(n((\lambda, \mu_K) + (\delta, z'))) \\
&\times \int_{v = 0}^\infty \exp\left(-\frac{\pi n^2}{2 v z_{\nu^+}^2} - 2 \pi v q_w(\lambda)\right) c_{k, \beta}(\delta, q(\lambda), s, v) v^{\frac{b^+ - 5}{2} + \kappa - h - j - t} \mathrm{d}v,
\end{align*}
where $\Phi^K_{k, \beta}(w, p_{w,0}, s)$ denotes the constant term at $t = 0$ of the regularized theta integral for $E^K_{k, \beta}(\tau, s, 0, 0)$, i.e.
$$\CT_{t = 0} \lim_{T \to \infty} \int_{\mathcal{F}_T} \langle E^K_{k, \beta}(\tau, s, 0, 0), \Theta_K(\tau, w, p_{w, 0}) \rangle v^{\frac{b^+ - 1}{2} + \kappa - t} \frac{\mathrm{d}x \mathrm{d}v}{v^2}.$$
\end{thm}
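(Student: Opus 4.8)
The plan is to substitute the $K$-expansion of Theorem \ref{thm:ThetaFunctionKExpansion} into the regularized integral $\Phi_{k,\beta}(\nu,p,s,t)$ and to split the resulting sum over $(c,d)\in\IZ^2$ with $c\equiv(\gamma,z)\bmod N_z$ into the degenerate term $(c,d)=(0,0)$ and the non-degenerate terms $(c,d)\neq(0,0)$. Expanding the pairing as $\langle E_{k,\beta}(\tau,s),\Theta_L(\tau,\nu,p)\rangle=\sum_{\gamma\in L'/L}(E_{k,\beta})_\gamma(\tau,s)\,\overline{\theta_{\gamma+L}(\tau,\nu,p)}$ and inserting the $K$-expansion for each conjugated theta component, one must keep track that anti-linearity in the second argument turns $p_{\omega,h}$ into $\overline{p}_{\omega,h}$ and $(-2iv)^{-h}(c\overline\tau+d)^h$ into $(2iv)^{-h}(c\tau+d)^h$.

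For the degenerate term only $h=0$ contributes, since the factor $(c\tau+d)^h$ vanishes for $h>0$ when $c=d=0$; the exponential phase collapses to $1$, and the congruence forces $\gamma\in L_0'/L$. Grouping the $\gamma$ by their image $\pi(\gamma)\in K'/K$ and recognising the inner sum as the relevant component of $E^K_{k,\beta}(\tau,s,0,0)$ — which by the transformation law for $f^K$ is genuinely $\rho_K$-modular — one obtains $\frac{1}{\sqrt{2vz_{\nu^+}^2}}\langle E^K_{k,\beta}(\tau,s,0,0),\Theta_K(\tau,\omega,p_{\omega,0})\rangle$, where $\omega$ is the isometry denoted $w$ in the statement. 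Since $\frac{1}{\sqrt{2vz_{\nu^+}^2}}=\frac{1}{\sqrt2\,\lvert z_{\nu^+}\rvert}v^{-1/2}$, the exponent of $v$ drops from $\frac{b^+}{2}+\kappa-t$ to $\frac{b^+-1}{2}+\kappa-t$, matching the definition of $\Phi^K_{k,\beta}$, and this yields the first summand $\frac{1}{\sqrt2\,\lvert z_{\nu^+}\rvert}\Phi^K_{k,\beta}(\omega,p_{\omega,0},s)$.

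The non-degenerate terms are treated by unfolding. Writing $n=\gcd(c,d)>0$ and $(c,d)=n(c_0,d_0)$ with $(c_0,d_0)$ coprime, the coprime pairs are precisely the bottom rows of representatives of $\Gamma_\infty\bs\SL_2(\IZ)$, so the sum over $(c_0,d_0)$ together with the integral over $\calF$ unfolds to the strip $\{0\le u\le1,\ v>0\}$, with a factor $2$ since $-I\notin\Gamma_\infty$ acts trivially on $\IH$ (this is what converts the prefactor $\frac{1}{\sqrt2\,\lvert z_{\nu^+}\rvert}$ into $\frac{\sqrt2}{\lvert z_{\nu^+}\rvert}$); the sum over $n$ and the congruence on $c=nc_0$ survive. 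Integrating over $u\in[0,1]$ then matches the Fourier expansion of $E_{k,\beta}$ against that of the shifted $K$-theta function $\theta_{K+\pi(\gamma-cz')}(\tau,d\mu_K,-c\mu_K,\omega,p_{\omega,h})$: the frequency matching singles out the coefficients $c_{k,\beta}(\delta,q(\lambda),s,v)$ with $\delta\in L_0'/L$, $\pi(\delta)=\lambda+K$; the factor $e(-\frac{\lvert c\tau+d\rvert^2}{4ivz_{\nu^+}^2})$, using $\lvert c\tau+d\rvert^2=n^2 v/\Im(M\tau)$, becomes after unfolding the Gaussian $\exp(-\frac{\pi n^2}{2vz_{\nu^+}^2})$; the majorant Gaussian of $\theta_K$ gives $\exp(-2\pi v q_\omega(\lambda))$; and the phases assemble into $e(n((\lambda,\mu_K)+(\delta,z')))$. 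Collecting the powers $v^{\frac{b^+}{2}+\kappa-t-2}$ from the measure, $v^{-1/2}$ from the prefactor, $v^{-h}$ from $(2iv)^{-h}$ and $v^{-j}$ from expanding $\exp(-\Delta/8\pi v)$ yields the exponent $\frac{b^+-5}{2}+\kappa-h-j-t$, and taking $\CT_{t=0}$ produces the claimed formula.

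The main obstacle is this unfolding together with the index bookkeeping: one must reorganise the sum over $(c,d)\neq(0,0)$ with $c\equiv(\gamma,z)\bmod N_z$ into the gcd $n$ and the coprime part, confirm that the coprime pairs genuinely parametrise $\Gamma_\infty\bs\SL_2(\IZ)$ so that $\calF$ unfolds to the strip, and trace how the phase $-(\gamma,z')d+q(z')cd$ together with the shifts $d\mu_K,-c\mu_K$ recombine into the clean index $\delta$ and the phase $e(n((\lambda,\mu_K)+(\delta,z')))$ while the automorphy factor $(c_0\overline\tau+d_0)^h$ unfolds to leave only $n^h$. Since the unfolding is valid only for $\Re(s)\gg0$, where the theta integral converges absolutely by Theorem \ref{thm:ThetaLiftPoles}, the identity must first be established there and then propagated by meromorphic continuation, carrying the constant-term-at-$t=0$ regularisation through each step.
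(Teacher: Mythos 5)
Your proposal is correct and follows exactly the route the paper takes (which itself defers to Borcherds' Theorem 7.1): insert the $K$-expansion of Theorem \ref{thm:ThetaFunctionKExpansion}, separate the $(c,d)=(0,0)$ term to get the $\Phi^K$ contribution, unfold the remaining Poincar\'e series against $\Gamma_\infty\bs\SL_2(\IZ)$ via the gcd decomposition, and integrate over $u$ to match Fourier coefficients. Your version is in fact considerably more detailed than the paper's three-sentence sketch, and the bookkeeping you flag (powers of $v$, the factor $2$ from $-I$, the phase recombination) is handled correctly.
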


\begin{proof}
One uses the expansion of Theorem \ref{thm:ThetaFunctionKExpansion} and unfolds with respect to this Poincar\'e series. Then plug in the Fourier expansion of the Eisenstein series and the theta function with respect to $K$ and calculate the integral over $u$.
\end{proof}

\begin{lem}[{\cite[Lemma 7.3]{Borcherds}}]
For $\lambda = 0$ the integral of Theorem \ref{sec:UnfoldingAgainstTheta} is equal to
\begin{align*}
&(\delta_{\beta, \delta} + (-1)^\kappa \delta_{- \beta, \delta}) \left(\frac{\pi n^2}{2 z_{\nu^+}^2}\right)^{s + \frac{b^+ - 3}{2} + \kappa - h - j - t} \Gamma\left(-s - \frac{b^+ - 3}{2} - \kappa + h + j + t\right) \\
&+ c_{k, \beta}(\delta, 0, s) \left(\frac{\pi n^2}{2 z_{\nu^+}^2}\right)^{\frac{b^+ - 1}{2} + \kappa - h - j - k - s - t} \Gamma\left(-\frac{b^+ - 1}{2} - \kappa + h + j + k + s + t\right).
\end{align*}
Moreover, the term with $\lambda = 0$ is then given by
\begin{align*}
&\sum_{j = 0}^{\infty} \frac{(-\Delta)^j(\overline{p}_{w,h})(0)}{(8\pi)^j j!} \left(\frac{\pi}{2 z_{\nu^+}^2}\right)^{\frac{b^+ - 3}{2} + \kappa - h - j - t} \\
&\times \sum_{b, c \in \IZ / N_z \IZ} e\left(\frac{bc}{N_z}\right) \bigg( \Gamma\left(-s - \frac{b^+ - 3}{2} - \kappa + h + j + t\right) \left(\frac{\pi}{2 z_{\nu^+}^2}\right)^{s} \\
&\times (\delta_{\beta, \frac{bz}{N_z}} + (-1)^\kappa \delta_{- \beta, \frac{bz}{N_z}}) \zeta_+^c(-2s - b^+ + 3 - 2\kappa + h + 2j + 2t) \\
&+ \Gamma\left(-\frac{b^+ - 1}{2} - \kappa + h + j + k + s + t\right) \left(\frac{\pi}{2 z_{\nu^+}^2}\right)^{1 - k - s} \\
&\times c_{k, \beta}\left(\frac{bz}{N_z}, 0, s\right) \zeta_+^c(-b^+ + 1 - 2\kappa + h + 2j + 2k + 2s + 2t)\bigg).
\end{align*}
\end{lem}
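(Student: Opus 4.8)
The plan is to take the $\lambda = 0$ summand of the expression in Theorem~\ref{thm:UnfoldingAgainstTheta} and evaluate the inner $v$-integral explicitly, then reorganize the remaining sums over $\delta$ and $n$ into the modified zeta functions $\zeta_+^c$.

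First I would specialize the integrand to $\lambda = 0$. Since $w(0) = 0$, $q_w(0) = 0$, $q(0) = 0$ and $(0, \mu_K) = 0$, the exponential collapses to $\exp(-\pi n^2 / (2 v z_{\nu^+}^2))$ and the Fourier coefficient reduces to its constant-in-$u$ part
$$c_{k, \beta}(\delta, 0, s, v) = (\delta_{\beta, \delta} + (-1)^\kappa \delta_{-\beta, \delta}) v^s + c_{k, \beta}(\delta, 0, s) v^{1 - s - k}.$$
Splitting the integral accordingly leaves two integrals of the shape $\int_0^\infty \exp(-a/v)\, v^{c} \,\mathrm{d}v$ with $a = \pi n^2 / (2 z_{\nu^+}^2)$. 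The substitution $w = a/v$ turns each into a Gamma integral, giving $\int_0^\infty \exp(-a/v)\, v^{c}\,\mathrm{d}v = a^{c+1}\Gamma(-c-1)$ whenever $\Re(-c-1) > 0$; for the two values $c = s + \tfrac{b^+-5}{2} + \kappa - h - j - t$ and $c = 1 - s - k + \tfrac{b^+-5}{2} + \kappa - h - j - t$ this produces exactly the two terms of the first displayed formula, the convergence condition being met for $\Re(t)$ in a suitable range and the identity extended by meromorphic continuation.

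Next I would assemble the full $\lambda = 0$ contribution. Here the key input is the description of the fibre of $\pi : L_0'/L \to K'/K$ over $0 + K$: one checks that it is parametrized by $\delta = \tfrac{b z}{N_z}$ with $b \in \IZ/N_z\IZ$, so that $(\delta, z') = b/N_z$ and the phase in Theorem~\ref{thm:UnfoldingAgainstTheta} becomes $e(n(\delta, z')) = e(nb/N_z)$. Substituting $a = \pi n^2/(2 z_{\nu^+}^2)$ into $a^{c+1}$ makes the $n$-dependence of each term a single power (after combining with the prefactor $n^h$), and I would split off the factors $(\pi/(2 z_{\nu^+}^2))^{s}$ and $(\pi/(2 z_{\nu^+}^2))^{1-k-s}$ from the common power $(\pi/(2 z_{\nu^+}^2))^{(b^+-3)/2 + \kappa - h - j - t}$.

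Finally, the sum over $n \geq 1$ is resolved using
$$\sum_{n > 0} e(nb/N_z)\, n^{-w} = \sum_{c \in \IZ/N_z\IZ} e(bc/N_z)\, \zeta_+^c(w),$$
which holds because $e(nb/N_z)$ depends only on $n \bmod N_z$. Reading off $w$ from the two exponents gives the arguments $-2s - b^+ + 3 - 2\kappa + h + 2j + 2t$ and $-b^+ + 1 - 2\kappa + h + 2j + 2k + 2s + 2t$, and collecting the $\delta_{\beta, bz/N_z}$ and $c_{k,\beta}(bz/N_z, 0, s)$ factors yields the second display. I expect the main obstacle to be the bookkeeping in this last step: correctly identifying the fibre over $0 + K$, matching each $\delta$ to its Kronecker-delta and coefficient factors, and converting the phase-weighted $n$-sum into the double sum over $b, c \bmod N_z$ with the character factor $e(bc/N_z)$ and the modified zeta functions $\zeta_+^c$.
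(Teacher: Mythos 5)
Your proposal is correct and follows essentially the same route as the paper: insert the constant Fourier coefficient $c_{k,\beta}(\delta,0,s,v)$, evaluate the two resulting integrals via $\int_0^\infty e^{-\alpha/v}v^{\beta}\,\mathrm{d}v = \alpha^{\beta+1}\Gamma(-\beta-1)$, parametrize the fibre of $\pi$ over $0+K$ by $\tfrac{bz}{N_z}$, and convert the phase-weighted $n$-sum into $\sum_{c}e(bc/N_z)\zeta_+^c(\cdot)$. All the exponent bookkeeping in your sketch matches the stated formulas.
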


\begin{proof}
Inserting the $c_{k, \beta}(\delta, q(\lambda), s, v) = (\delta_{\beta, \delta} + (-1)^\kappa \delta_{- \beta, \delta}) v^s + c_{k, \beta}(\delta, 0, s) v^{1 - k - s}$ yields
\begin{align*}
&(\delta_{\beta, \delta} + (-1)^\kappa \delta_{- \beta, \delta}) \int_{v = 0}^\infty \exp\left(-\frac{\pi n^2}{2 v z_{\nu^+}^2}\right) v^{s + \frac{b^+ - 5}{2} + \kappa - h - j - t} \mathrm{d}v \\
&+ c_{k, \beta}(\delta, 0, s) \int_{v = 0}^\infty \exp\left(-\frac{\pi n^2}{2 v z_{\nu^+}^2}\right) v^{\frac{b^+ - 3}{2} + \kappa - h - j - k - s - t} \mathrm{d}v.
\end{align*}
Using the $\Gamma$-integral
\begin{align*}
\int_{v = 0}^\infty \exp\left(- \frac{\alpha}{v}\right) v^{\beta} \mathrm{d}v = \alpha^{\beta + 1} \Gamma(-\beta - 1)
\end{align*}
for $\Re(\alpha) > 0$ and $\Re(\beta) < -1$ with $\alpha = \frac{\pi n^2}{2 z_{\nu^+}^2}$ we obtain
\begin{align*}
&(\delta_{\beta, \delta} + (-1)^\kappa \delta_{- \beta, \delta}) \left(\frac{\pi n^2}{2 z_{\nu^+}^2}\right)^{s + \frac{b^+ - 3}{2} + \kappa - h - j - t} \Gamma\left(-s - \frac{b^+ - 3}{2} - \kappa + h + j + t\right) \\
&+ c_{k, \beta}(\delta, 0, s) \left(\frac{\pi n^2}{2 z_{\nu^+}^2}\right)^{\frac{b^+ - 1}{2} + \kappa - h - j - k - s - t} \Gamma\left(-\frac{b^+ - 1}{2} - \kappa + h + j + k + s + t\right).
\end{align*}
This yields for the summand with $\lambda = 0$
\begin{align*}
&\sum_{\substack{\delta \in L_0' / L \\ \pi(\delta) = 0 + K}} \sum_{j = 0}^{\infty} \frac{(-\Delta)^j(\overline{p}_{w,h})(0)}{(8\pi)^j j!} \sum_{n = 1}^\infty n^{h} e(n(\delta, z')) \\
&\times \bigg((\delta_{\beta, \delta} + (-1)^\kappa \delta_{- \beta, \delta}) \left(\frac{\pi n^2}{2 z_{\nu^+}^2}\right)^{s + \frac{b^+ - 3}{2} + \kappa - h - j - t} \Gamma\left(-s - \frac{b^+ - 3}{2} - \kappa + h + j + t\right) \\
&+ c_{k, \beta}(\delta, 0, s) \left(\frac{\pi n^2}{2 z_{\nu^+}^2}\right)^{\frac{b^+ - 1}{2} + \kappa - h - j - k - s - t} \Gamma\left(-\frac{b^+ - 1}{2} - \kappa + h + j + k + s + t\right)\bigg).
\end{align*}
Using that a set of representatives for $\delta \in L_0' / L$ with $p(\delta) = 0 + K$ is given by $\frac{bz}{N_z}$ where $b$ runs through $\IZ / N_z \IZ$ yields
\begin{align*}
&\sum_{j = 0}^{\infty} \frac{(-\Delta)^j(\overline{p}_{w,h})(0)}{(8\pi)^j j!} \Gamma\left(-s - \frac{b^+ - 3}{2} - \kappa + h + j + t\right) \\
&\times \left(\frac{\pi}{2 z_{\nu^+}^2}\right)^{s + \frac{b^+ - 3}{2} + \kappa - h - j - t} \sum_{b \in \IZ / N_z \IZ} (\delta_{\beta, \frac{bz}{N_z}} + (-1)^\kappa \delta_{- \beta, \frac{bz}{N_z}}) \\
&\times \sum_{n = 1}^\infty e\left(\frac{nb}{N_z}\right) n^{2s + b^+ - 3 + 2\kappa - h - 2j - 2t} \\
&+\sum_{j = 0}^{\infty} \frac{(-\Delta)^j(\overline{p}_{w,h})(0)}{(8\pi)^j j!} \Gamma\left(-\frac{b^+ - 1}{2} - \kappa + h + j + k + s + t\right) \\
&\times \left(\frac{\pi}{2 z_{\nu^+}^2}\right)^{\frac{b^+ - 1}{2} + \kappa - h - j - k - s - t} \sum_{b \in \IZ / N_z \IZ} c_{k, \beta}\left(\frac{bz}{N_z}, 0, s\right) \\
&\times \sum_{n = 1}^\infty e\left(\frac{nb}{N_z}\right) n^{b^+ - 1 + 2\kappa - h - 2j - 2k - 2s - 2t}.
\end{align*}
This shows the result.
\end{proof}

\begin{lem}[{\cite[Lemma 7.2]{Borcherds}}]
For $\lambda \neq 0$ with $q(\lambda) = 0$ the integral of Theorem \ref{sec:UnfoldingAgainstTheta} is equal to
\begin{align*}
&2 (\delta_{\beta, \delta} + (-1)^\kappa \delta_{-\beta, \delta}) \left(\frac{n}{2 \lvert \lambda_{\nu^+} \rvert \lvert z_{\nu^+} \rvert}\right)^{s + \frac{b^+ - 3}{2} + \kappa - h - j - t} K_{s + \frac{b^+ - 3}{2} + \kappa - h - j - t}\left(2\pi n \frac{\lvert \lambda_{\nu^+} \rvert}{\lvert z_{\nu^+} \rvert}\right) \\
&+ 2 c_\beta(\delta, 0, s) \left(\frac{n}{2 \lvert \lambda_{\nu^+} \rvert \lvert z_{\nu^+} \rvert}\right)^{\frac{b^+ - 1}{2} + \kappa - h - j - k - s - t} K_{\frac{b^+ - 1}{2} + \kappa - h - j - k - s - t}\left(2\pi n \frac{\lvert \lambda_{\nu^+} \rvert}{\lvert z_{\nu^+} \rvert}\right).
\end{align*}
\end{lem}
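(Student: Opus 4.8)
The plan is to evaluate the $v$-integral appearing in Theorem~\ref{thm:UnfoldingAgainstTheta}, namely
$$\int_{v = 0}^\infty \exp\left(-\frac{\pi n^2}{2 v z_{\nu^+}^2} - 2 \pi v q_w(\lambda)\right) c_{k, \beta}(\delta, q(\lambda), s, v)\, v^{\frac{b^+ - 5}{2} + \kappa - h - j - t}\, \mathrm{d}v,$$
by reducing it to the classical integral representation of the $K$-Bessel function. Since $q(\lambda) = 0$, the coefficient $c_{k,\beta}(\delta, q(\lambda), s, v) = c_{k,\beta}(\delta, 0, s, v)$ is the constant-term coefficient, so the first step is to insert its explicit shape
$$c_{k,\beta}(\delta, 0, s, v) = (\delta_{\beta,\delta} + (-1)^\kappa \delta_{-\beta,\delta})\, v^s + c_{k,\beta}(\delta, 0, s)\, v^{1 - k - s}.$$
This writes the integral as a sum of two integrals of the same type, one carrying the power $v^s$ and one carrying $v^{1-k-s}$.

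The second step is to simplify the exponent using $q(\lambda) = 0$. Since the restriction of $\omega$ to $K \otimes \IR$ is an isometry, one has $K \otimes \IR = \omega^+ \oplus \omega^-$, so that for $\lambda \in K'$ the projection $\lambda_{w^+}$ coincides with $\lambda_{\nu^+}$, and from $q(\lambda_{w^+}) + q(\lambda_{w^-}) = q(\lambda) = 0$ we obtain $q_w(\lambda) = 2 q(\lambda_{w^+}) = \lvert \lambda_{\nu^+} \rvert^2$. Hence the exponent takes the form $-A/v - Bv$ with
$$A = \frac{\pi n^2}{2 z_{\nu^+}^2}, \qquad B = 2\pi \lvert \lambda_{\nu^+} \rvert^2.$$
Both are positive for $\lambda \neq 0$: indeed $\lambda_{\nu^+} \neq 0$, since $\lambda_{\nu^+} = 0$ together with $q(\lambda) = 0$ would force $\lambda = 0$. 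In particular the integral converges.

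Finally I would apply the formula
$$\int_0^\infty v^{\mu - 1} \exp\left(-\frac{A}{v} - B v\right) \mathrm{d}v = 2 \left(\frac{A}{B}\right)^{\mu/2} K_\mu\!\left(2\sqrt{AB}\right), \qquad \Re(A), \Re(B) > 0,$$
to each of the two pieces. The power of $v$ in the $v^s$-piece is $v^{s + \frac{b^+ - 5}{2} + \kappa - h - j - t}$, so there $\mu = s + \frac{b^+ - 3}{2} + \kappa - h - j - t$, while the $v^{1-k-s}$-piece gives $\mu = \frac{b^+ - 1}{2} + \kappa - h - j - k - s - t$; these are exactly the two Bessel orders in the claim. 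A short computation yields $A/B = \bigl(n / (2 \lvert z_{\nu^+} \rvert \lvert \lambda_{\nu^+} \rvert)\bigr)^2$ and $2\sqrt{AB} = 2\pi n\, \lvert \lambda_{\nu^+} \rvert / \lvert z_{\nu^+} \rvert$, so substituting the two values of $\mu$ reproduces both summands of the lemma, the leading factor $2$ being the one in the Bessel integral. There is no genuine obstacle in this argument; the only steps demanding care are the normalization bookkeeping behind the identity $q_w(\lambda) = \lvert \lambda_{\nu^+} \rvert^2$ and the matching of the two exponents $\mu$ with the two orders of $K_\mu$, which is precisely what pins down the parameters in the stated formula.
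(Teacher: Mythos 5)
Your computation is essentially identical to the paper's proof: split $c_{k,\beta}(\delta,0,s,v)$ into its $v^s$ and $v^{1-k-s}$ parts, apply the Bessel integral $\int_0^\infty \exp(-\alpha v - \beta/v)v^\gamma\,\mathrm{d}v = 2(\beta/\alpha)^{(\gamma+1)/2}K_{\gamma+1}(2\sqrt{\alpha\beta})$ with $\alpha = 2\pi q_w(\lambda)$, $\beta = \pi n^2/(2z_{\nu^+}^2)$, and then use $q(\lambda)=0$ to simplify $q_w(\lambda)$. The one place you go astray is the claim that $K\otimes\IR = \omega^+\oplus\omega^-$ and hence $\lambda_{w^+}=\lambda_{\nu^+}$ for $\lambda\in K'$: this is false in general, since $\omega^+\oplus\omega^-$ is the orthogonal complement of $\IR z_{\nu^+}\oplus\IR z_{\nu^-}$ while $K\otimes\IR = (\IQ z + \IQ z')^\perp\otimes\IR$, and a vector $\lambda$ with $(\lambda,z)=0$ need not satisfy $(\lambda,z_{\nu^+})=0$. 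What the isometry property of $\omega\vert_{K\otimes\IR}$ actually gives is $q(\lambda_{w^+})+q(\lambda_{w^-})=q(\lambda)=0$, hence $q_w(\lambda)=\lambda_{\omega^+}^2$; the paper's own proof correspondingly ends with $\lvert\lambda_{\omega^+}\rvert$, and the $\lvert\lambda_{\nu^+}\rvert$ in the lemma's statement should be read as $\lvert\lambda_{\omega^+}\rvert$ (consistent with the later identity $\lvert\lambda_{\omega^+}\rvert = \lvert(\lambda,Y_L)\rvert\,\lvert z_{Z_L}\rvert$ used in the Fourier expansion). So your argument is the right one, but the reconciliation of the two projections is not a legitimate step; the fix is to correct the notation in the conclusion rather than to identify the projections.
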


\begin{proof}
Again we insert the Fourier coefficients to obtain
\begin{align*}
&(\delta_{\beta, \delta} + (-1)^\kappa \delta_{-\beta, \delta}) \int_{v = 0}^\infty \exp\left(-\frac{\pi n^2}{2 v z_{\nu^+}^2} - 2 \pi v q_w(\lambda)\right) v^{s + \frac{b^+ - 5}{2} + \kappa - h - j - t} \mathrm{d}v \\
&+ c_{k, \beta}(\delta, 0, s) \int_{v = 0}^\infty \exp\left(-\frac{\pi n^2}{2 v z_{\nu^+}^2} - 2 \pi v q_w(\lambda)\right) v^{\frac{b^+ - 3}{2} + \kappa - h - j - k - s - t} \mathrm{d}v.
\end{align*}
We use the formula \cite[p. 313, 6.3(17)]{Erdelyi}
\begin{align*}
\int_{v = 0}^\infty \exp\left(- \alpha v - \frac{\beta}{v}\right) v^{\gamma} \mathrm{d}v = 2 \left(\frac{\beta}{\alpha}\right)^{\frac{1}{2} (\gamma + 1)} K_{\gamma + 1}(2\sqrt{\alpha \beta})
\end{align*}
for $\Re(\alpha), \Re(\beta) > 0$ with $\alpha = 2 \pi q_w(\lambda), \beta = \frac{\pi n^2}{2 z_{\nu^+}^2}$. This yields
\begin{align*}
&2 (\delta_{\beta, \delta} + (-1)^\kappa \delta_{-\beta, \delta}) \left(\frac{n^2}{4 q_w(\lambda) z_{\nu^+}^2}\right)^{\frac{1}{2} (s + \frac{b^+ - 3}{2} + \kappa - h - j - t)} \\
&\times K_{s + \frac{b^+ - 3}{2} + \kappa - h - j - t}\left(2\pi n \sqrt{\frac{q_w(\lambda)}{z_{\nu^+}^2}}\right) \\
&+ 2 c_{k, \beta}(\delta, 0, s) \left(\frac{n^2}{4 q_w(\lambda) z_{\nu^+}^2}\right)^{\frac{1}{2} (\frac{b^+ - 1}{2} + \kappa - h - j - k - s - t)} \\
&\times K_{\frac{b^+ - 1}{2} + \kappa - h - j - k - s - t}\left(2\pi n \sqrt{\frac{q_w(\lambda)}{z_{\nu^+}^2}}\right).
\end{align*}
Using $q_w(\lambda) = \lambda_{\omega^+}^2$ if $q(\lambda) = 0$ we can rewrite this to
\begin{align*}
&2 (\delta_{\beta, \delta} + (-1)^\kappa \delta_{-\beta, \delta}) \left(\frac{n}{2 \lvert \lambda_{\omega^+} \rvert \lvert z_{\nu^+} \rvert}\right)^{s + \frac{b^+ - 3}{2} + \kappa - h - j - t} K_{s + \frac{b^+ - 3}{2} + \kappa - h - j - t}\left(2\pi n \frac{\lvert \lambda_{\omega^+} \rvert}{\lvert z_{\nu^+} \rvert}\right) \\
&+ 2 c_{k, \beta}(\delta, 0, s) \left(\frac{n}{2 \lvert \lambda_{\omega^+} \rvert \lvert z_{\nu^+} \rvert}\right)^{\frac{b^+ - 1}{2} + \kappa - h - j - k - s - t} K_{\frac{b^+ - 1}{2} + \kappa - h - j - k - s - t}\left(2\pi n \frac{\lvert \lambda_{\omega^+} \rvert}{\lvert z_{\nu^+} \rvert}\right).
\end{align*}
\end{proof}

\section{Orthogonal Non-Holomorphic Eisenstein Series}

From now on let $L$ be an even lattice of signature $(2, l)$ and let $V = L \otimes \IQ, V(\IR) = V \otimes \IR, V(\IC) = V \otimes \IC$. Write $P(V(\IC))$ for the corresponding \emph{projective space}. For elements $Z_L = X_L + i Y_L \in V(\IC) \setminus \{0\}$ we write $[Z_L]$ for the canonical projection to $P(V(\IC))$. The subset
$$\calK = \{[Z_L] \in P(V(\IC)) \mid (Z_L, Z_L) = 0, (Z_L, \overline{Z_L}) > 0 \}$$
is the hermitian symmetric domain associated to $O(V)$. It has two connected components which are interchanged by $[Z_L] \mapsto [\overline{Z_L}]$. We choose one of them and call it $\calK^+$. The action of $O(V(\IR))$ on $V(\IR)$ induces an action on $\calK$. Let $O^+(V(\IR))$ be the subgroup which preserves the connected components of $\calK$ and let
$$\tilde{\calK}^+ = \{Z_L \in V(\IC) \setminus \{0\} \mid [Z_L] \in \calK^+ \}$$
be the preimage of $\calK^+$ under the projection. For $z \in \Iso_0(L)$ and $z' \in L'$ with $(z, z') = 1$ write $K = L \cap z^\perp \cap z'^\perp$. Let $d \in \Iso_0(K)$, $d' \in K'$ with $(d, d') = 1$ and $D = K \cap d^\perp \cap d'^\perp$. Moreover, let $\tilde{z} = z' - q(z') z$ and $\tilde{d} = d' - q(d') d$. Let $d_3, \ldots, d_l$ be a basis of $D$. Then $\tilde{d}, d, d_3, \ldots, d_l$ is a basis of $K \otimes \IR$. We define the orthogonal upper half plane as
$$\IH_l := \{Z = X + iY \in W(\IC) = K \otimes \IC \mid q(Y) > 0, (Y, d) > 0\}.$$
We will write $Z = z_1 \tilde{d} + z_2 d + Z_D$ with $Z_D \in D \otimes \IC$ and analogously for $X, Y \in W(\IR)$. For $Z \in \IH_l$ we define
$$Z_L := Z - q(Z) z + \tilde{z}.$$
If the component $\calK^+$ is chosen properly, this yields a biholomorphic map $Z \mapsto [Z_L]$. By setting $i C = \IH_l \cap i (K \otimes \IR)$ we see that $\IH_l = K \otimes \IR + iC$ is a \emph{tube domain}. The action of $O^+(V(\IR))$ on $\calK^+$ then induces an action of $O^+(V(\IR))$ on $\IH_l$. Let
$$j : O^+(V) \times \IH_l \to \IC^\times, \quad j(\sigma, Z) := (\sigma(Z_L), z)$$
be the \emph{factor of automorphy}, so that we have
$$j(\sigma, Z)(\sigma Z)_L = \sigma(Z_L)$$
and the cocycle relation
$$j(\sigma_1 \sigma_2, Z) = j(\sigma_1, \sigma_2 Z) j(\sigma_2, Z).$$
According to \cite[Lemma 3.20]{BruinierChern} we have
$$q(\Im(\sigma Z)) = \frac{q(\Im(Z))}{\lvert j(\sigma, Z) \rvert^2}.$$

\begin{defn}
A function $f : \tilde{\calK}^+ \to \IC$ is called modular of weight $\kappa \in \IZ$ with respect to $\Gamma \subseteq \Gamma(L)$ if it satisfies
\begin{enumerate}
\item[(i)] $f(t Z_L) = t^{-\kappa} f(Z_L)$ for all $t \in \IC^\times$.
\item[(ii)] $f(\sigma Z_L) = f(Z_L)$ for all $\sigma \in \Gamma$.
\end{enumerate}
\end{defn}

For a modular function $f : \tilde{\calK}^+ \to \IC$ of weight $\kappa \in \IZ$ with respect to $\Gamma$ define
$$f_z : \IH_l \to \IC, \quad f_z(Z) := f(Z_L) = f(Z - q(Z) z + \tilde{z}).$$
Then $f_z$ satisfies
$$f_z(\sigma Z) = j(\sigma, Z)^\kappa f_z(Z)$$
for all $\sigma \in \Gamma$ and we have a bijective correspondence between modular functions and functions with this transformation property. If we define the weight $\kappa$ slash operator $f_z \vert_\kappa \sigma$ for $\sigma \in O^+(V)$ by
$$(f_z \vert_\kappa \sigma)(Z) := j(\sigma, Z)^{-\kappa} f_z(\sigma Z),$$
then the modular functions on $\IH_l$ are exactly the functions that are invariant under the slash operator for all $\sigma \in \Gamma$.

Let now $\kappa \in \IZ$ and $\lambda \in \Iso_0(L)$ and recall the tube domain representation $\IH_l$ corresponding to a fixed cusp $z \in \Iso_0(L)$. Denote by $\Gamma(L)_\lambda \subseteq \Gamma(L)$ the stabilizer of $\lambda$ in $\Gamma(L)$ and write $\sigma_{\lambda} \in O^+(V)$ for an element satisfying $\sigma_{\lambda} \lambda = z$. Then $q(Y)^s \vert_\kappa \sigma_{\lambda}$ has weight $\kappa$ with respect to $\Gamma(L)_\lambda$. Hence we define the non-holomorphic Eisenstein series corresponding to the $0$-dimensional cusp $\lambda$ by
\begin{align*}
\calE_{\kappa, \lambda}(Z, s)
&:= \sum_{\sigma \in \Gamma(L)_\lambda \bs \Gamma(L)} q(Y)^s \vert_\kappa \sigma_{\lambda}\sigma \\
&= \sum_{\sigma \in \Gamma(L)_\lambda \bs \Gamma(L)} j(\sigma_{\lambda} \sigma, Z)^{-\kappa} \left(\frac{q(Y)}{\lvert j(\sigma_{\lambda} \sigma, Z) \rvert^2}\right)^s \\
&= \sum_{\sigma \in \Gamma(L)_\lambda \bs \Gamma(L)} (\lambda, \sigma(Z_L))^{-\kappa} \left(\frac{q(Y)}{\lvert (\lambda, \sigma(Z_L)) \rvert^2}\right)^s
\end{align*}
for $Z \in \IH_l$ and $\Re(s) \gg 0$. The Eisenstein series does not depend on the choice of $\sigma_\lambda$. We have $\Gamma(L)_{-\lambda} = \Gamma(L)_\lambda$ and $\calE_{\kappa, -\lambda}(Z, s) = (-1)^\kappa \calE_{\kappa, \lambda}(Z, s)$.

Consider the map
$$\pi_L : \Gamma(L) \bs \Iso_0(L) \to \Gamma(L) \bs \Iso_0(L') \to \Iso(L' / L),$$
where the first arrow is given by $z \mapsto \frac{z}{N_z}$ with $N_z$ being the level of $z$. For $\delta \in \Iso(L' / L)$ we let
$$\calG_{\kappa, \delta} := \sum_{\lambda \in \pi_L^{-1}(\delta)} \calE_{\kappa, \lambda},$$
in particular, $\calG_{\kappa, \delta} = 0$ if the preimage is empty and if $\delta = -\delta$ for odd $\kappa$. For $\delta \in \Iso(L' / L)$ of order $N_\delta$ and $\chi$ a Dirichlet character of modulus $N_\delta$ we define
$$\calG_{\kappa, \delta, \chi} := \sum_{k \in (\IZ / N_\delta \IZ)^\times} \chi(k) \calG_{\kappa, k \delta}.$$
By orthogonality of characters, the space generated by $\calG_{\kappa, \delta}$ and the space generated by $\calG_{\kappa, \delta, \chi}$ coincide. Moreover, one easily sees, that injectivity (resp. surjectivity) of $\pi_L$ implies that the map induced by $\delta \mapsto \calG_{\kappa, \delta}, \delta \in \Iso(L' / L)$ is surjective (resp. injective). The following lemmas investigate the injectivity and surjectivity of $\pi_L$.

\begin{lem}
The map $\pi_L$ is surjective if and only if $L$ splits a hyperbolic plane.
\end{lem}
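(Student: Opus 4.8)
The plan is to prove both implications, and the key observation is that the surjectivity of $\pi_L$ is controlled entirely by the \emph{trivial} class $0 \in \Iso(L' / L)$: representability of $0$ both follows from surjectivity and, conversely, forces the splitting, from which surjectivity onto all isotropic classes can be recovered by an explicit construction.

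For the direction ``$\pi_L$ surjective $\Rightarrow$ $L$ splits a hyperbolic plane'' I would argue as follows. Since $0$ is an isotropic element of $L' / L$, surjectivity provides a class represented by some $z \in \Iso_0(L)$ with $\pi_L(z) = 0$, i.e. $z / N_z \in L$. As $z$ is primitive this forces $N_z = 1$, so $(z, L) = \IZ$ and there is $z' \in L$ with $(z, z') = 1$. Replacing $z'$ by $z'' := z' - q(z') z \in L$ (note $q(z') \in \IZ$ since $L$ is even) keeps $(z, z'') = 1$ while making $z''$ isotropic, so $U := \IZ z + \IZ z''$ is a hyperbolic plane. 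Being unimodular it splits off as an orthogonal direct summand: the orthogonal projection of any $x \in L$ to $U \otimes \IQ$ already lies in $U$ because $U = U'$, whence $L = U \oplus U^\perp$.

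For the converse I would use an explicit construction. Write $L = U \oplus M$ with $U = \IZ e + \IZ f$, $(e, e) = (f, f) = 0$, $(e, f) = 1$; then $L' = U \oplus M'$ and $L' / L \cong M' / M$ compatibly with the quadratic forms. Given $\delta \in \Iso(L' / L)$, represent it by $m \in M'$ with $q(m) \in \IZ$ and let $N$ be the order of $\delta$. The vector $w := e - q(m) f + m$ is isotropic, and I would take $z := N w$, the primitive generator of the rational line $\IQ w \cap L = \IZ z$. One then computes $(z, f) = N$, $(z, e) = -N q(m)$ and $(z, M) \subseteq N \IZ$, so that $N_z = N$; since $z / N_z = w \equiv m \pmod{L}$ this gives $\pi_L(z) = \delta$, and every isotropic class is hit.

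The main obstacle is the bookkeeping in the converse: checking that $z = N w$ is genuinely primitive in $L$ and that its level is exactly $N = N_\delta$ rather than a proper divisor, which is precisely what makes $z / N_z$ land in the prescribed class. The forward direction is comparatively soft once one isolates the role of the class $0$ and invokes the standard fact that a unimodular sublattice always splits off.
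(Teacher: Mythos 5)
Your proof is correct and follows essentially the same route as the paper: surjectivity forces the class $0$ to be hit by a level-one primitive isotropic vector, which spans a unimodular hyperbolic plane that splits off, and conversely the vector $e - q(m)f + m$ built from a splitting realizes any prescribed isotropic class. You are in fact somewhat more careful than the paper, which works directly with this vector in $L'$ and leaves implicit the verification that the corresponding primitive vector $z = Nw$ of $L$ has level exactly $N$ so that $z/N_z$ lands in the right class.
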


\begin{proof}
If $\pi_L$ is surjective, then there is some primitive isotropic element in $L$ of level $1$, which yields a splitting of a hyperbolic plane. Conversely, assume
$$L = U \oplus L_1 = e_1 \IZ \oplus e_2 \IZ + L_1,$$
and let $\delta \in L' / L = L_1' / L_1$ be an isotropic element. Consider a preimage $\lambda \in L_1'$. Then $e_1 - q(\lambda) e_2 + \lambda \in L'$ is primitive isotropic with image $\delta \in L' / L$.
\end{proof}

\begin{lem}[{\cite[Lemma 4.4]{FreitagHermann}}]
If $L$ splits two hyperbolic planes, then $\pi_L$ is bijective.
\end{lem}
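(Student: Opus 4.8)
The surjectivity is already contained in the preceding lemma: splitting two hyperbolic planes in particular splits one, so $\pi_L$ is onto. The whole content is therefore the injectivity, which I would reformulate as a transitivity statement: two primitive isotropic vectors $z_1, z_2 \in \Iso_0(L)$ lie in the same $\Gamma(L)$-orbit as soon as $z_1/N_{z_1} \equiv z_2/N_{z_2} \pmod L$. Equivalently, the $\Gamma(L)$-orbit of a primitive isotropic $z$ is determined by the single invariant $z/N_z + L \in L'/L$; note that the order of this class in $L'/L$ equals the level $N_z$, so in particular $N_{z_1} = N_{z_2}$ whenever the two classes agree. This is precisely the Eichler criterion specialized to isotropic vectors, and the natural tool is the Eichler transvection.

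For $u \in L$ isotropic and $a \in L \cap u^\perp$ set
$$E(u,a)(x) = x + (u,x)\,a - (a,x)\,u - q(a)(u,x)\,u.$$
A direct check shows that $E(u,a)$ is an isometry fixing $u$, that $E(u,a)(x) - x \in L$ for every $x \in L'$ (since $u, a \in L$ force $(u,x), (a,x) \in \IZ$), and that $t \mapsto E(u, ta)$ is a continuous path to the identity inside the identity component; hence $E(u,a) \in \Gamma(L)$. These transvections are the elementary moves with which I would connect $z_1$ to $z_2$.

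The plan for the transitivity is to bring an arbitrary primitive isotropic $z$ into a normal form depending only on $\delta := z/N_z + L$. Writing $L = U_1 \oplus U_2 \oplus L_0$ with $U_i = \langle e_i, f_i\rangle$ hyperbolic (so that $L'/L = L_0'/L_0$), I would first apply transvections supported on the first plane, i.e.\ of the form $E(e_1, \cdot)$ and $E(f_1, \cdot)$, to clear the pairings of $z$ against $U_2 \oplus L_0$, reducing to a vector whose interaction with $L_0$ is controlled; the second hyperbolic plane $U_2$ then provides the extra room to absorb the remaining $L_0$-part and to normalize the divisor, producing a standard representative $z_\delta$ depending only on $\delta$ and $N = \mathrm{ord}(\delta)$. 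Since the whole chain of moves lies in $\Gamma(L)$, any two primitive isotropic vectors with the same divisor class are carried to the same $z_\delta$, giving injectivity.

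The main obstacle is exactly this normalization: one must verify that the available transvections genuinely suffice to reach a representative determined by $\delta$ alone, and this is where both hyperbolic planes are indispensable — a single plane only yields surjectivity, as the previous lemma shows. A secondary technical point, which I would check along the way, is that the connecting isometry can be taken in $O^+(L)$ rather than merely in $\tilde{O}(L)$; this is guaranteed by the fact that each Eichler transvection lies in the identity component, so no orientation obstruction arises. The cleanest exposition simply cites Eichler's criterion as in \cite{FreitagHermann}, but the transvection argument above is the mechanism underlying it.
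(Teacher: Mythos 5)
Your proposal is correct and matches the paper, which offers no proof of its own and simply cites \cite[Lemma~4.4]{FreitagHermann}: the statement is exactly the Eichler criterion specialized to primitive isotropic vectors, surjectivity does follow from the preceding lemma, and your identification of the invariant $z/N_z + L$ (whose order is indeed $N_z$) together with the Eichler transvections $E(u,a)$ as the elementary moves in $\Gamma(L)$ is the standard mechanism behind that reference. The only part you leave as a sketch is the normalization showing the transvections suffice, but you correctly flag this and defer to the cited criterion, which is precisely what the paper does.
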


\section{Orthogonal Eisenstein Series as Theta Lifts}

Let now $L$ be an even lattice of signature $(2, l)$ with $l \equiv 0 \bmod{2}$. Let $\kappa = \frac{l}{2} - 1 + k$ and $p(x_1, x_2) = (x_1 + ix_2)^\kappa$. Recall the identification $\nu : \calK^+ \to \Gr(L), [Z_L] = [X_L + iY_L] \mapsto \nu(Z_L) = \IR X_L + \IR Y_L$. For $Z_L = X_L + iY_L \in \tilde{\calK}^+$ the map
$$a X_L \to b Y_L \mapsto \lvert Y \rvert \begin{pmatrix}a \\ b\end{pmatrix}$$
defines an isometry $\nu_{Z_L} : \nu(Z_L) \to \IR^{(2, 0)}$ and by abuse of notation we will write $\nu_{Z_L}$ for every isometry which equals $\nu_{Z_L}$ on $\nu(Z_L)$. For $\lambda \in V(\IR)$ we write $\lambda_{Z_L}$ and $\lambda_{Z_L^\perp}$ for the projection of $\lambda$ to $\nu(Z_L)$ and $\nu(Z_L)^\perp$. Then $q(\lambda) = q(\lambda_{Z_L}) + q(\lambda_{Z_L^\perp})$ and we denote by $q_{Z_L}(\lambda) = q(\lambda_{Z_L}) - q(\lambda_{Z_L^\perp})$ the positive definite majorant. We have
\begin{align*}
z_{Z_L} &= \frac{1}{Y^2} X_L \\
\lvert z_{Z_L} \rvert &= \frac{1}{\lvert Y \rvert} \\
\mu_K &= X \\
\omega^+ &= \IR Y_L \\
\lambda_{\omega^+} &= \frac{(\lambda, Y_L)}{Y^2} Y_L \\
\lvert \lambda_{\omega^+} \rvert &= \lvert (\lambda, Y_L) \rvert \lvert z_{Z_L} \rvert.
\end{align*}
Now, the map
$$\lambda \mapsto p(\nu_{Z_L}(\lambda)) = \frac{(\lambda, Z_L)^\kappa}{\lvert Y \rvert^\kappa}$$
is well-defined, since $p$ only depends on the positive definite variables. Thus we define
\begin{align*}
\Theta_L(\tau, Z)
&= \frac{i^\kappa v^{\frac{l}{2}}}{2 \lvert Y \rvert^\kappa} \Theta_L(\tau, \nu_{Z_L}, p) \\
&= \frac{v^{\frac{l}{2}}}{2 (-2i)^{\kappa}} \sum_{\lambda \in L'} \frac{(\lambda, Z_L)^\kappa}{q(Y)^\kappa} \frake_\lambda(\tau q(\lambda_{Z_L}) + \overline{\tau} q(\lambda_{Z_L^\perp}))
\end{align*}
and write $\Phi_{k, \beta}(Z, s) := \frac{i^\kappa}{2 \lvert Y \rvert^\kappa} \Phi_{k, \beta}(\nu_{Z_L}, p, s)$. Then $\Phi_{k, \beta}(Z, s)$ is modular with respect to $\Gamma(L)$. Theorem \ref{thm:ThetaLiftPoles} yields a functional equation for the lift $\Phi_{k, \frakv}(Z, s)$, i.e. we have
$$\Phi_{k, \frakv}(Z, s) = \frac{1}{2} \sum_{\alpha \in \Iso(L' / L)} c_{k, \frakv}(\alpha, 0, s) \Phi_{k, \alpha}(\tau, 1 - k - s)$$
and we see that the poles of $\Phi_{k, \frakv}(Z, s)$ come from the poles of $E_{k, \frakv}(\tau, s)$ if $\kappa > 0$.

\begin{thm}\label{thm:ThetaLiftIsEisensteinSeries}
The theta lift of $E_{k, \beta}(\cdot, s)$ is equal to
\begin{align*}
\Phi_{k, \beta}(Z, s) &= \frac{\Gamma(s + \kappa)}{(-2\pi i)^{\kappa}\pi^{s}} \sum_{\lambda \in \Gamma(L) \bs \Iso_0(L)} N_\lambda^{2s + \kappa} \zeta_+^{k_{\lambda\beta}}(2s + \kappa) \calE_{\kappa, \lambda}(Z, s) \\
&= \frac{\Gamma(s + \kappa)}{(-2\pi i)^{\kappa}\pi^{s}} \sum_{\delta \in \Iso(L' / L)} N_\delta^{2s + \kappa} \zeta_+^{k_{\delta\beta}}(2s + \kappa) \calG_{\kappa, \delta}(Z, s),
\end{align*}
where $k_{\delta\beta} \in \IZ / N_\delta \IZ$ with $\beta = k_{\delta \beta} \delta$ (and the corresponding summands vanish if such a $k_{\delta \beta}$ does not exist), $N_\lambda$ is the level of $\lambda$ and $N_\delta$ is the order of $\delta$.
\end{thm}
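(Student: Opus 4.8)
The plan is to specialize the unfolded formula of Theorem \ref{thm:UnfoldingAgainstEisenstein} to the geometric setting at hand and then re-index the resulting sum over isotropic dual-lattice vectors into orthogonal Eisenstein series. Throughout I work in the range $\Re(s) \gg 0$, where every series converges absolutely; the identity of meromorphic functions then follows for all $s$ by the continuation of Theorem \ref{thm:ThetaLiftPoles}. First I would set $b^+ = 2$ and $p(x_1, x_2) = (x_1 + ix_2)^\kappa$. Since $p$, and hence $\overline{p}$, is harmonic, $\Delta^j \overline{p} = 0$ for $j \geq 1$, so only the $j = 0$ term survives in Theorem \ref{thm:UnfoldingAgainstEisenstein}; using $\frac{b^+}{2} + \kappa - 1 = \kappa$ and $b^+ + \kappa - 2 = \kappa$ this collapses to
$$\Phi_{k, \beta}(\nu_{Z_L}, p, s) = 2 \sum_{\mu \in \Iso_0(L')} \zeta_+^{k_{\mu\beta}}(2s + \kappa)\, \overline{p}(\nu_{Z_L}(\mu))\, \frac{\Gamma(s + \kappa)}{(2\pi q_{\nu_{Z_L}}(\mu))^{s + \kappa}}.$$

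Next I would insert the explicit data recorded just before the theorem. As $\mu$ is real, $\overline{p}(\nu_{Z_L}(\mu)) = (\mu, \overline{Z_L})^\kappa / \lvert Y \rvert^\kappa$; and for isotropic $\mu$ the majorant is $q_{\nu_{Z_L}}(\mu) = 2 q(\mu_{Z_L})$, which a short computation (using $(X_L, Y_L) = 0$, $(X_L, X_L) = (Y_L, Y_L) = \lvert Y \rvert^2 = 2q(Y)$, and $(\mu, Z_L)(\mu, \overline{Z_L}) = \lvert (\mu, Z_L) \rvert^2$) identifies with $\lvert (\mu, Z_L) \rvert^2 / \lvert Y \rvert^2$. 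Substituting these, multiplying by the normalizing factor from $\Phi_{k, \beta}(Z, s) = \frac{i^\kappa}{2 \lvert Y \rvert^\kappa} \Phi_{k, \beta}(\nu_{Z_L}, p, s)$, and rewriting $(\mu, \overline{Z_L})^\kappa / \lvert (\mu, Z_L) \rvert^{2(s + \kappa)} = (\mu, Z_L)^{-\kappa} \lvert (\mu, Z_L) \rvert^{-2s}$ together with $\lvert Y \rvert^{2s} = (2q(Y))^s$, the powers of $\lvert Y \rvert$ cancel and the constant simplifies through $\frac{i^\kappa 2^s}{(2\pi)^{s + \kappa}} = \frac{1}{(-2\pi i)^\kappa \pi^s}$ (since $i^\kappa(-i)^\kappa = 1$) to
$$\Phi_{k, \beta}(Z, s) = \frac{\Gamma(s + \kappa)}{(-2\pi i)^\kappa \pi^s} \sum_{\mu \in \Iso_0(L')} \zeta_+^{k_{\mu\beta}}(2s + \kappa)\, (\mu, Z_L)^{-\kappa} \left( \frac{q(Y)}{\lvert (\mu, Z_L) \rvert^2} \right)^s.$$

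The remaining, and main, step is the re-indexing. I would use the correspondence $\lambda \mapsto \mu = \lambda / N_\lambda$ between primitive isotropic vectors of $L$ and of $L'$, under which $N_\lambda$ is simultaneously the level of $\lambda$ and the order of $\mu$ in $L' / L$; this turns each summand into $N_\lambda^{2s + \kappa} (\lambda, Z_L)^{-\kappa} (q(Y)/\lvert (\lambda, Z_L) \rvert^2)^s$. Grouping the sum over $\Iso_0(L)$ into $\Gamma(L)$-orbits, and using that the level and the residue $k_{\mu\beta}$ are $\Gamma(L)$-invariant (because $\Gamma(L)$ fixes $L'/L$ pointwise) together with the identity $(\sigma \lambda, Z_L) = (\lambda, \sigma^{-1} Z_L)$, each orbit sum becomes exactly $\calE_{\kappa, \lambda}(Z, s)$; here one checks that $q(Y)$ enters as the fixed imaginary part in both descriptions, so that no extra automorphy factor intervenes. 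This gives the first displayed formula.

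Finally, grouping the $\Gamma(L)$-orbits according to $\delta = \pi_L(\lambda) \in \Iso(L'/L)$, and observing that $N_\lambda = N_\delta$ and $k_{\mu\beta} = k_{\delta\beta}$ depend only on $\delta$, I would collect $\sum_{\lambda \in \pi_L^{-1}(\delta)} \calE_{\kappa, \lambda} = \calG_{\kappa, \delta}$ to obtain the second formula. The delicate point throughout is the careful bookkeeping of levels versus orders and of left versus right cosets in the orbit--stabilizer identification, together with verifying that the passage from the full sum over $\Iso_0(L')$ to $\Gamma(L)$-orbit sums is legitimate in the region of absolute convergence; this is where I expect the bulk of the work to lie.
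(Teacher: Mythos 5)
Your proposal is correct and follows essentially the same route as the paper: specialize Theorem \ref{thm:UnfoldingAgainstEisenstein} to the harmonic polynomial $p(x_1,x_2)=(x_1+ix_2)^\kappa$ so only $j=0$ survives, use $q_{Z_L}(\mu)=2q(\mu_{Z_L})=\lvert(\mu,Z_L)\rvert^2/(2q(Y))$ for isotropic $\mu$ to produce the automorphy factor, and then re-index the sum over $\Iso_0(L')$ via $\Gamma(L)$-orbits (using the $\Gamma(L)$-invariance of $k_{\mu\beta}$ and the bijection $\lambda\mapsto\lambda/N_\lambda$ with $\Iso_0(L)$) into the series $\calE_{\kappa,\lambda}$ and $\calG_{\kappa,\delta}$. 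The constant and majorant computations you sketch match the paper's, so no further comment is needed.
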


\begin{proof}
We have $\Delta \overline{p} = 0$ and thus by Theorem \ref{thm:UnfoldingAgainstEisenstein}
$$\Phi_{k, \beta}(Z, s) = \frac{\Gamma(s + \kappa)}{(-2i)^{\kappa}} \sum_{\lambda \in \Iso_0(L')} \zeta_+^{k_{\lambda\beta}}(2s + \kappa) \frac{(\lambda, \overline{Z_L})^\kappa}{q(Y)^\kappa} \frac{1}{(2 \pi q_{Z_L}(\lambda))^{s + \kappa}},$$
where, again, the summands with $\beta + L \cap \IZ \lambda = \emptyset$ are meant to be zero. For $q(\lambda) = 0$ we have
$$2 q_{Z_L}(\lambda) = 4 q(\lambda_{Z_L}) = \frac{\lvert(\lambda, Z_L)\rvert^2}{q(Y)}$$
and thus
\begin{align*}
\Phi_{k, \beta}(Z, s)
&= \frac{\Gamma(s + \kappa)}{(-2\pi i)^{\kappa}\pi^{s}} \sum_{\lambda \in \Iso_0(L')} \frac{\zeta_+^{k_{\lambda\beta}}(2s + \kappa)}{(\lambda, Z_L)^\kappa} \left(\frac{q(Y)}{\lvert (\lambda, Z_L) \rvert^2}\right)^s \\
&= \frac{\Gamma(s + \kappa)}{(-2\pi i)^{\kappa}\pi^{s}} \sum_{\lambda \in \Gamma(L) \bs \Iso_0(L')} \sum_{\sigma \in \Gamma(L)_\lambda \bs \Gamma(L)} \frac{\zeta_+^{k_{\sigma(\lambda)\beta}}(2s + \kappa)}{(\sigma(\lambda), Z_L)^\kappa} \left(\frac{q(Y)}{\lvert (\sigma(\lambda), Z_L) \rvert^2}\right)^s.
\end{align*}
Since we have $k_{\sigma(\lambda)\beta} = k_{\lambda \beta}$ and $\sigma(\beta + L) = \beta + L$ for $\sigma \in \Gamma(L)$, this yields
\begin{align*}
& \frac{\Gamma(s + \kappa)}{(-2\pi i)^{\kappa}\pi^{s}} \sum_{\lambda \in \Gamma(L) \bs \Iso_0(L')} \zeta_+^{k_{\lambda\beta}}(2s + \kappa) \sum_{\sigma \in \Gamma(L)_\lambda \bs \Gamma(L)}  \frac{1}{(\sigma(\lambda), Z_L)^\kappa} \left(\frac{q(Y)}{\lvert (\sigma(\lambda), Z_L) \rvert^2}\right)^s \\
&= \frac{\Gamma(s + \kappa)}{(-2\pi i)^{\kappa}\pi^{s}} \sum_{\lambda \in \Gamma(L) \bs \Iso_0(L)} N_\lambda^{2s + \kappa} \zeta_+^{k_{\lambda\beta}}(2s + \kappa) \calG_{\kappa, \lambda}(Z, s) \\
&= \frac{\Gamma(s + \kappa)}{(-2\pi i)^{\kappa}\pi^{s}} \sum_{\delta \in \Iso(L' / L)} N_\delta^{2s + \kappa} \zeta_+^{k_{\delta\beta}}(2s + \kappa) \calG_{\kappa, \delta}(Z, s)
\end{align*}
\end{proof}

In particular, we obtain a map from $\Iso(\IC[L' / L])$ to the space of non-holomorphic Eisenstein series sending $\frakv \in \Iso(\IC[L' / L])$ to $\Phi_{k, \frakv}$.

\begin{thm}\label{thm:ThetaLiftNonHolomorphicSurjective}
The theta lifts $\Phi_{k, \beta}(Z, s)$ generate the space spanned by the Eisenstein series $\calG_{\kappa, \delta, \chi}$ and hence the space spanned by the Eisenstein series $\calG_{\kappa, \delta}$. In particular, if $\pi_L$ is injective, then the theta lift is surjective onto non-holomorphic Eisenstein series corresponding to $0$-dimensional cusps and if $\pi_L$ is surjective, the theta lift is injective on vector-valued non-holomorphic Eisenstein series.
\end{thm}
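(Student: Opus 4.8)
The plan is to invert the linear relation furnished by Theorem \ref{thm:ThetaLiftIsEisensteinSeries}, which writes each theta lift as
$$\Phi_{k,\beta}(Z,s)=\frac{\Gamma(s+\kappa)}{(-2\pi i)^\kappa\pi^s}\sum_{\delta\in\Iso(L'/L)}N_\delta^{2s+\kappa}\zeta_+^{k_{\delta\beta}}(2s+\kappa)\,\calG_{\kappa,\delta}(Z,s),$$
by diagonalizing the coefficient matrix with Dirichlet characters. One inclusion is immediate from this formula: every $\Phi_{k,\beta}$ lies in the span of the $\calG_{\kappa,\delta}$. It therefore remains to recover each $\calG_{\kappa,\delta}$, or equivalently each twisted series $\calG_{\kappa,\delta,\chi}$, from the theta lifts, after which the claim follows because $\{\calG_{\kappa,\delta,\chi}\}$ and $\{\calG_{\kappa,\delta}\}$ span the same space by orthogonality of characters.

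First I would pass to the twisted lifts $\Phi_{k,\gamma,\chi}:=\sum_{n\in(\IZ/N_\gamma\IZ)^\times}\chi(n)\Phi_{k,n\gamma}$, for $\gamma\in\Iso(L'/L)$ of order $N_\gamma$ and $\chi$ a character modulo $N_\gamma$. The decisive combinatorial point is that $\zeta_+^{k_{\delta\beta}}$ is nonzero only when $\beta\in\IZ\delta$, i.e. when $\IZ\beta\subseteq\IZ\delta$; hence $\Phi_{k,n\gamma}$ involves only those $\calG_{\kappa,\delta}$ with $\IZ\delta\supseteq\IZ\gamma$. The contribution of the generators $\delta=a\gamma$ (with $a\in(\IZ/N_\gamma\IZ)^\times$, so $N_\delta=N_\gamma$ and $k_{\delta,n\gamma}=na^{-1}$) can be summed explicitly: substituting $c=na^{-1}$ and using $\sum_{c\in(\IZ/N_\gamma\IZ)^\times}\chi(c)\zeta_+^c(w)=L(w,\chi)$, the Dirichlet $L$-function, I obtain
\begin{align*}
\Phi_{k,\gamma,\chi}(Z,s)=\frac{\Gamma(s+\kappa)}{(-2\pi i)^\kappa\pi^s}\,N_\gamma^{2s+\kappa}\,L(2s+\kappa,\chi)\,\calG_{\kappa,\gamma,\chi}(Z,s)+R_\gamma(Z,s),
\end{align*}
where $R_\gamma$ is the linear combination of those $\calG_{\kappa,\delta}$ with $\IZ\delta\supsetneq\IZ\gamma$ coming from the non-generator terms (note that any $\delta$ with $\IZ\delta=\IZ\gamma$ is automatically of the form $a\gamma$, so the remainder only involves strictly larger cyclic subgroups).

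This identity exhibits the passage from the twisted lifts to the twisted Eisenstein series as block triangular for the partial order on isotropic cyclic subgroups given by inclusion, with diagonal factor $N_\gamma^{2s+\kappa}L(2s+\kappa,\chi)$. I would then argue by downward induction on the order of $\IZ\gamma$: when $\IZ\gamma$ is maximal among isotropic cyclic subgroups the remainder $R_\gamma$ vanishes, so $\calG_{\kappa,\gamma,\chi}$ is a nonzero meromorphic multiple of $\Phi_{k,\gamma,\chi}$; in the inductive step all $\calG_{\kappa,\delta,\chi'}$ with $\IZ\delta\supsetneq\IZ\gamma$ have already been expressed through theta lifts, so $R_\gamma$ lies in their span and $\calG_{\kappa,\gamma,\chi}$ does as well. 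This proves that the $\Phi_{k,\beta}$ generate the span of the $\calG_{\kappa,\delta,\chi}$, and hence of the $\calG_{\kappa,\delta}$.

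The two final assertions then follow formally from the remark preceding the lemmas on $\pi_L$: if $\pi_L$ is injective then $\delta\mapsto\calG_{\kappa,\delta}$ is surjective onto the Eisenstein series attached to $0$-dimensional cusps, whence so is the lift; if $\pi_L$ is surjective then the $\calG_{\kappa,\delta}$ are linearly independent, and combined with the invertibility of the triangular transformation this forces $\frakv\mapsto\Phi_{k,\frakv}$ to be injective. The main obstacle is the book-keeping in the second step: one must verify that exactly the generators of $\IZ\gamma$ produce the diagonal term, so that the character sum collapses to the single factor $L(2s+\kappa,\chi)$ while every remaining $\delta$ contributes a strictly larger cyclic subgroup, and one must note that $L(2s+\kappa,\chi)$ is not identically zero in $s$ for any Dirichlet character, so that the diagonal blocks are genuinely invertible over the field of meromorphic functions.
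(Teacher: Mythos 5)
Your proposal follows essentially the same route as the paper: regroup the sum in Theorem \ref{thm:ThetaLiftIsEisensteinSeries} by cyclic isotropic subgroups containing $\beta$, twist by Dirichlet characters so that the generator terms collapse to $N_\gamma^{2s+\kappa}L(2s+\kappa,\chi)\,\calG_{\kappa,\gamma,\chi}$, and induct downward on the poset of cyclic isotropic subgroups starting from the maximal ones. Your write-up is in fact slightly more explicit than the paper's (which ends with ``an inductive argument now shows the result'') about why the transformation is triangular and why the diagonal factor $L(2s+\kappa,\chi)$ is invertible as a meromorphic function of $s$.
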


\begin{proof}
We first resort the sum. Instead of summing over $\delta \in L' / L$ with $q(\delta) = 0$ and $\beta = k_{\delta \beta} \delta$, we sum over all cyclic isotropic subgroups containing $\beta$ and then the generators of the subgroup. This yields
\begin{align*}
\Phi_{k, \beta}(Z, s)
&= \frac{\Gamma(s + \kappa)}{(-2 i)^{\kappa}\pi^{s + \kappa}} \sum_{\substack{\beta \in H \\ H \text{ isotropic}}} \sum_{\langle \delta \rangle = H} N_\delta^{2s + \kappa} \zeta_+^{k_{\delta \beta}}(2s + \kappa) \calG_{\kappa, \delta}(Z, s) \\
&= \frac{\Gamma(s + \kappa)}{(-2\pi i)^{\kappa}\pi^{s}} \sum_{\substack{\beta \in H = \langle \delta \rangle \\ H \text{ isotropic}}} N_\delta^{2s + \kappa} \sum_{\substack{k \in (\IZ / N_\delta \IZ)^\times}} \zeta_+^{k_{k\delta \beta}}(2s + \kappa) \calG_{\kappa, k\delta}(Z, s) \\
&= \frac{\Gamma(s + \kappa)}{(-2\pi i)^{\kappa}\pi^{s}} \sum_{\substack{\beta \in H = \langle \delta \rangle \\ H \text{ isotropic}}} N_\delta^{2s + \kappa} \sum_{\substack{k \in (\IZ / N_\delta \IZ)^\times}} \zeta_+^{k^* k_{\delta \beta}}(2s + \kappa) \calG_{\kappa, k\delta}(Z, s).
\end{align*}
Let $\chi$ be a Dirichlet character of modulus $N_\beta$ and recall that
$$E_{k, \beta, \chi} := \sum_{m \in (\IZ / N_\beta \IZ)^\times} \chi(m) E_{k, m\beta}.$$
Its lift is obviously given by
$$\frac{\Gamma(s + \kappa)}{(-2\pi i)^{\kappa}\pi^{s}} \sum_{m \in (\IZ / N_\beta \IZ)^\times} \chi(m) \sum_{\substack{m\beta \in H = \langle \delta \rangle \\ H \text{ isotropic}}} N_\delta^{2s + \kappa} \sum_{\substack{k \in (\IZ / N_\delta \IZ)^\times}} \zeta_+^{k^* k_{\delta m \beta}}(2s + \kappa) \calG_{\kappa, k \delta}(Z, s).$$
Now for $m \in (\IZ / N_\beta \IZ)^\times$ we have $m \beta \in H$ if and only if $\beta \in H$. Moreover, we have $k^* k_{\delta m \beta} = k_{\delta k^* m \beta}$ and thus obtain that $\frac{(-2 \pi i)^\kappa \pi^s}{\Gamma(s + \kappa)} \Phi(Z, E_{k, \beta, \chi}(\cdot, s))$ is equal to
\begin{align*}
&\sum_{\substack{\beta \in H = \langle \delta \rangle \\ H \text{ isotropic}}} N_\delta^{2s + \kappa} \sum_{\substack{k \in (\IZ / N_\delta \IZ)^\times}} \calG_{\kappa, k \delta}(Z, s) \sum_{m \in (\IZ / N_\beta \IZ)^\times} \chi(m) \zeta_+^{k^* k_{\delta m\beta}}(2s + \kappa) \\
&= \sum_{\substack{\beta \in H = \langle \delta \rangle \\ H \text{ isotropic}}} N_\delta^{2s + \kappa} \sum_{\substack{k \in (\IZ / N_\delta \IZ)^\times}} \chi(k) \calG_{\kappa, k \delta}(Z, s) \sum_{m \in (\IZ / N_\beta \IZ)^\times} \chi(m) \zeta_+^{k_{\delta m\beta}}(2s + \kappa) \\
&= \sum_{\substack{\beta \in H = \langle \delta \rangle \\ H \text{ isotropic}}} N_\delta^{2s + \kappa} \sum_{m \in (\IZ / N_\beta \IZ)^\times} \chi(m) \zeta_+^{k_{\delta m\beta}}(2s + \kappa) \calG_{\kappa, \delta, \chi}(Z, s).
\end{align*}
Observe that if $\langle \beta \rangle$ is a maximal cyclic isotropic subgroup, then
\begin{align*}
\Phi(Z, E_{k, \beta, \chi})
&= \frac{\Gamma(s + \kappa) N_\beta^{2s + \kappa}}{(-2\pi i)^{\kappa}\pi^{s}} \sum_{m \in (\IZ / N_\beta \IZ)^\times} \chi(m) \zeta_+^{m}(2s + \kappa) \calG_{\kappa, \beta, \chi}(Z, s) \\
&= \frac{\Gamma(s + \kappa) N_\beta^{2s + \kappa}}{(-2\pi i)^{\kappa}\pi^{s}} L(2s + \kappa, \chi) \calG_{\kappa, \beta, \chi}(Z, s).
\end{align*}
An inductive argument now shows the result.
\end{proof}

An immediate consequence is

\begin{cor}\label{cor:EisensteinSeriesFunctionalEquation}
The non-holomorphic Eisenstein series $\calG_{\kappa, \delta}(Z, s)$ for $\delta \in \Iso(L' / L)$ have functional equations relating the values at $s$ with the values at $1 - k - s$.
\end{cor}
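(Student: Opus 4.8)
The plan is to transport the functional equation of the vector-valued Eisenstein series through the theta lift and then read it off on the orthogonal side, using the surjectivity established in Theorem \ref{thm:ThetaLiftNonHolomorphicSurjective}. First I would record the functional equation the lifts already satisfy. Since $\frakv \mapsto \Phi_{k,\frakv}$ is linear, lifting the relation
$$E_{k,\beta}(\tau,s) = \frac{1}{2} \sum_{\alpha \in \Iso(L'/L)} c_{k,\beta}(\alpha,0,s)\, E_{k,\alpha}(\tau,1-k-s)$$
term by term gives, by Theorem \ref{thm:ThetaLiftPoles} and its orthogonal reformulation,
$$\Phi_{k,\beta}(Z,s) = \frac{1}{2} \sum_{\alpha \in \Iso(L'/L)} c_{k,\beta}(\alpha,0,s)\, \Phi_{k,\alpha}(Z,1-k-s).$$
Thus the span of the lifts at parameter $s$ maps into the span at $1-k-s$; it remains to descend this closed relation to the $\calG_{\kappa,\delta}$.

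Second, I would invoke the two explicit dictionaries from the preceding sections. Theorem \ref{thm:ThetaLiftIsEisensteinSeries} writes each $\Phi_{k,\beta}(Z,s)$ as a linear combination of the $\calG_{\kappa,\delta}(Z,s)$ with coefficients $\tfrac{\Gamma(s+\kappa)}{(-2\pi i)^\kappa \pi^s} N_\delta^{2s+\kappa}\, \zeta_+^{k_{\delta\beta}}(2s+\kappa)$, while the proof of Theorem \ref{thm:ThetaLiftNonHolomorphicSurjective} inverts this: for a maximal cyclic isotropic subgroup $\langle\beta\rangle$ one has the clean identity
$$\Phi(Z, E_{k,\beta,\chi}(\cdot,s)) = \frac{\Gamma(s+\kappa)\, N_\beta^{2s+\kappa}}{(-2\pi i)^\kappa \pi^s}\, L(2s+\kappa,\chi)\, \calG_{\kappa,\beta,\chi}(Z,s),$$
and the downward induction on the order of the isotropic subgroup recovers every $\calG_{\kappa,\delta,\chi}$, hence every $\calG_{\kappa,\delta}$, as a meromorphic-in-$s$ linear combination of theta lifts.

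Third, I would simply chain these together: express $\calG_{\kappa,\delta}(Z,s)$ as a combination of lifts $\Phi_{k,\beta}(Z,s)$, replace each lift by its functional-equation image, a combination of lifts at $1-k-s$, and re-expand those via Theorem \ref{thm:ThetaLiftIsEisensteinSeries} at parameter $1-k-s$ to land back in the span of the $\calG_{\kappa,\delta'}(Z,1-k-s)$. The resulting linear relation between the values of the $\calG_{\kappa,\delta}$ at $s$ and at $1-k-s$ is exactly the asserted functional equation; in the maximal case it collapses to the scalar identity of the Introduction, with $\tilde{\varphi}_{k,0}(z,s)$ assembling the $\Gamma$-, power-, and $L$-function ratios.

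The hard part will be the bookkeeping in the inversion step rather than any new analysis. One must check that the coefficients from Theorem \ref{thm:ThetaLiftIsEisensteinSeries} — the ratios of $\Gamma(s+\kappa)$, the factors $N_\delta^{2s+\kappa}$, and above all the Dirichlet $L$-values $L(2s+\kappa,\chi)$ (equivalently the modified zeta values $\zeta_+^{k_{\delta\beta}}(2s+\kappa)$) — are not identically zero in $s$, so that the induction of Theorem \ref{thm:ThetaLiftNonHolomorphicSurjective} is a genuine identity of meromorphic functions and can be run in both directions. Once the transition between $\{\Phi_{k,\frakv}(\cdot,s)\}$ and $\{\calG_{\kappa,\delta}(\cdot,s)\}$ is seen to be invertible away from a discrete set of $s$, the functional equation for the $\calG_{\kappa,\delta}$ follows formally from that for the lifts.
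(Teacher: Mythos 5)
Your proposal is correct and follows the same route the paper intends: the paper derives the corollary as an immediate consequence of the functional equation for the lifts (Theorem \ref{thm:ThetaLiftPoles}, restated in Section 8) combined with the fact, from Theorem \ref{thm:ThetaLiftNonHolomorphicSurjective}, that the $\calG_{\kappa,\delta}$ lie in the span of the lifts, so that the relation can be transported back and forth exactly as you describe. Your extra remark about checking that the $L$-values and $\Gamma$-factors are not identically zero, so the transition between the two spanning sets is invertible as meromorphic functions, is a sensible point of care that the paper leaves implicit.
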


We now turn to the Fourier expansion. We will need the following lemma. For similar formulas see \cite{OSullivan}.

\begin{lem}\label{lem:SumOfGummaFunctions}
For $\kappa \in \IZ, \kappa \geq 0$ we have
\begin{align*}
\sum_{j = 0}^{\lfloor \frac{\kappa}{2} \rfloor} (-1)^j {\kappa \choose 2j} \Gamma(1/2 + j) \Gamma(1/2 + s - j)
&= 2^{\kappa-2s} \pi  \frac{\Gamma(1 + 2s - \kappa)}{\Gamma(1 + s - \kappa)} \\
&= \sqrt{\pi} \frac{\Gamma(\frac{1}{2} + s - \frac{\kappa}{2}) \Gamma(1 + s - \frac{\kappa}{2})}{\Gamma(1 + s - \kappa)}.
\end{align*}
\end{lem}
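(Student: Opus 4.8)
The plan is to collapse the finite sum into a single classical trigonometric integral. First I would dispose of the second equality in the statement, which is purely formal: the identity
$$2^{\kappa-2s}\pi\frac{\Gamma(1+2s-\kappa)}{\Gamma(1+s-\kappa)} = \sqrt{\pi}\frac{\Gamma(\tfrac12+s-\tfrac{\kappa}{2})\Gamma(1+s-\tfrac{\kappa}{2})}{\Gamma(1+s-\kappa)}$$
is exactly the Legendre duplication formula $\Gamma(z)\Gamma(z+\tfrac12)=2^{1-2z}\sqrt{\pi}\,\Gamma(2z)$ applied with $z=\tfrac12+s-\tfrac{\kappa}{2}$ (so that $2z=1+2s-\kappa$ and $2^{1-2z}=2^{\kappa-2s}$). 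Hence only the first equality requires genuine work.

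For the first equality I would work in the region $\Re(s)\gg0$, where every Gamma argument occurring below has positive real part, and use the Beta-integral representation
$$\Gamma(\tfrac12+j)\Gamma(\tfrac12+s-j)=\Gamma(1+s)\int_0^1 t^{j-1/2}(1-t)^{s-j-1/2}\,\mathrm{d}t.$$
Substituting this into the sum and pulling the factor $t^{-1/2}(1-t)^{s-1/2}$ out of the (finite) sum, the remaining binomial sum is the even-degree part of a binomial expansion, namely
$$\sum_{j=0}^{\lfloor \kappa/2\rfloor}(-1)^j\binom{\kappa}{2j}\Bigl(\tfrac{t}{1-t}\Bigr)^j=\frac{(\sqrt{1-t}+i\sqrt{t})^\kappa+(\sqrt{1-t}-i\sqrt{t})^\kappa}{2\,(1-t)^{\kappa/2}},$$
obtained by writing $\sum_j\binom{\kappa}{2j}w^{2j}=\tfrac12\bigl((1+w)^\kappa+(1-w)^\kappa\bigr)$ with $w=i\sqrt{t/(1-t)}$.

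The key simplification comes from the substitution $t=\sin^2\theta$, $\theta\in[0,\tfrac{\pi}{2}]$: then $\sqrt{1-t}\pm i\sqrt{t}=\cos\theta\pm i\sin\theta=e^{\pm i\theta}$, so the numerator collapses to $\cos(\kappa\theta)$, and after the elementary bookkeeping of the Jacobian $\mathrm{d}t=2\sin\theta\cos\theta\,\mathrm{d}\theta$ and the powers of $\cos\theta$ the whole expression becomes
$$\sum_{j=0}^{\lfloor \kappa/2\rfloor}(-1)^j\binom{\kappa}{2j}\Gamma(\tfrac12+j)\Gamma(\tfrac12+s-j)=2\,\Gamma(1+s)\int_0^{\pi/2}\cos^{2s-\kappa}\theta\,\cos(\kappa\theta)\,\mathrm{d}\theta.$$
At this point I would invoke the classical integral (e.g.\ Gradshteyn--Ryzhik 3.631.9)
$$\int_0^{\pi/2}\cos^{a-1}\theta\,\cos(b\theta)\,\mathrm{d}\theta=\frac{\pi}{2^a}\frac{\Gamma(a)}{\Gamma(\tfrac{a+b+1}{2})\Gamma(\tfrac{a-b+1}{2})}$$
with $a=2s-\kappa+1$ and $b=\kappa$; the two lower arguments simplify to $s+1$ and $s-\kappa+1$, the resulting $\Gamma(s+1)$ cancels against $\Gamma(1+s)$, and one is left precisely with $2^{\kappa-2s}\pi\,\Gamma(1+2s-\kappa)/\Gamma(1+s-\kappa)$. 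Since both sides are products and quotients of Gamma functions, hence meromorphic in $s$, the identity then extends from $\Re(s)\gg0$ to all $s$ by analytic continuation.

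I expect the only delicate points to be, first, matching the parameters $(a,b)$ in the classical cosine integral correctly, since an off-by-one in the exponent of $\cos\theta$ would propagate through the whole computation (a sanity check against the cases $b=0$ and $a=1$ pins down the normalization); and second, verifying that the substitution $t=\sin^2\theta$ genuinely removes the apparent branch ambiguity of $(1-t)^{\kappa/2}$ when $\kappa$ is odd. The latter is harmless because $\sqrt{1-t}+i\sqrt{t}$ lies on the unit circle with argument in $(0,\tfrac{\pi}{2})$, so its $\kappa$-th power is unambiguously $e^{i\kappa\theta}$; everything else is a finite and routine manipulation.
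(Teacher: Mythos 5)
Your proof is correct, but it takes a genuinely different route from the paper's. The paper first proves the identity for integer values $s \geq \kappa$ by converting the half-integer Gamma values to factorials via the duplication formula and then carrying out a purely combinatorial manipulation of binomial coefficients; it then extends to all $s \in \IC$ by observing that both sides equal $\Gamma(\tfrac{1}{2}+s-\kappa)$ times a polynomial in $s$, so that agreement at infinitely many integers forces agreement everywhere. You instead represent each product $\Gamma(\tfrac12+j)\Gamma(\tfrac12+s-j)$ as a Beta integral, recognize the resulting finite sum under the integral sign as the even part of a binomial expansion, and collapse everything via $t=\sin^2\theta$ to Cauchy's integral $\int_0^{\pi/2}\cos^{2s-\kappa}\theta\,\cos(\kappa\theta)\,\mathrm{d}\theta$, finishing by analytic continuation from $\Re(s)\gg 0$. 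I checked the bookkeeping: the Jacobian and the powers of $\cos\theta$ do produce the exponent $2s-\kappa$, and with $a=2s-\kappa+1$, $b=\kappa$ the classical formula yields exactly $2^{\kappa-2s}\pi\,\Gamma(1+2s-\kappa)/\Gamma(1+s-\kappa)$ after the $\Gamma(1+s)$ cancellation; your treatment of the branch of $(1-t)^{\kappa/2}$ is also sound since $\sqrt{1-t}>0$ on $(0,1)$. The trade-off is that your argument outsources the combinatorics to a classical table integral and needs a (harmless) analytic continuation step at the end, whereas the paper's argument is self-contained and elementary but requires the slightly delicate observation about the common factor $\Gamma(\tfrac12+s-\kappa)$ and polynomial interpolation to pass from integer $s$ to complex $s$. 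Either proof is acceptable; yours is arguably cleaner if one is willing to cite the cosine-power integral.
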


\begin{proof}
We first assume that $s \geq \kappa$ is an integer. Then the duplication formula yields
\begin{align*}
&\sum_{j = 0}^{\lfloor \frac{\kappa}{2} \rfloor} (-1)^j {\kappa \choose 2j} \Gamma(1/2 + j) \Gamma(1/2 + (s - j)) \\
&= \frac{\pi \kappa!}{4^s} \sum_{j = 0}^{\lfloor \frac{\kappa}{2} \rfloor} (-1)^j \frac{1}{(2j)! (\kappa - 2j)!} \frac{(2j)!}{j!} \frac{(2s - 2j)!}{(s - j)!} \\
&= \frac{\pi \kappa! (2s - \kappa)!}{4^s s!} \sum_{j = 0}^{\lfloor \frac{\kappa}{2} \rfloor} (-1)^j {s \choose s - j} {2s - 2j \choose 2s - \kappa}  \\
&= (-1)^{\lfloor \frac{\kappa}{2} \rfloor} \frac{\pi \kappa! (2s - \kappa)!}{4^s s!} \sum_{j = 0}^{\lfloor \frac{\kappa}{2} \rfloor} (-1)^j {s \choose s - \lfloor \frac{\kappa}{2} \rfloor + j} {2s - 2 \lfloor \frac{\kappa}{2} \rfloor + 2j \choose 2s - \kappa} \\
&= (-1)^{s}\frac{\pi \kappa! (2s - \kappa)!}{4^s s!} \sum_{j = s - \lfloor \frac{\kappa}{2} \rfloor}^{s} (-1)^j {s \choose j} {2j \choose 2s - \kappa}  \\
&= \frac{\pi \kappa! (2s - \kappa)!}{2^{2s - \kappa} s!} {s \choose {s - \kappa}} \\
&= \frac{\pi (2s - \kappa)!}{2^{2s - \kappa} (s - \kappa)!} = 2^{\kappa - 2s} \pi \frac{\Gamma(2s - \kappa + 1)}{\Gamma(s - \kappa + 1)}
\end{align*}
This shows the identity for integers $s \geq \kappa$. Now observe using $\Gamma(z+1) = z\Gamma(z)$ and the duplication formula again that
\begin{align*}
2^{\kappa-2s} \pi  \frac{\Gamma(1 + 2s - \kappa)}{\Gamma(1 + s - \kappa)}
&= \sqrt{\pi} \frac{\Gamma(\frac{1}{2} + s - \frac{\kappa}{2}) \Gamma(1 + s - \frac{\kappa}{2})}{\Gamma(1 + s - \kappa)} \\
&= \Gamma(1/2 + s - \kappa) p(s)
\end{align*}
for some polynomial $p$. Similarly, the left hand side is given by
$$\sum_{j = 0}^{\lfloor \frac{\kappa}{2} \rfloor} (-1)^j {\kappa \choose 2j} \Gamma(1/2 + j) \Gamma(1/2 + s - j) = \Gamma(1/2 + s - \kappa) q(s)$$
for some polynomial $q$. Now we have seen that $p(s) = q(s)$ for infinitely many integers and thus $p(s) = q(s)$ for all $s \in \IC$.
\end{proof}

We have
$$\zeta^c(s) = \zeta_+^c(s) + e^{- \pi i s} \zeta_+^{-c}(s).$$
For positive integers $\kappa$ this yields
$$\zeta^c(\kappa) = \zeta_+^c(\kappa) + (-1)^\kappa \zeta_+^{-c}(\kappa)$$
and $\zeta^{-c}(\kappa) = (-1)^\kappa \zeta^c(\kappa)$. Moreover, for $b \in \IZ / N \IZ$ we have the functional equation
$$\sum_{c \in \IZ / N \IZ} e\left(\frac{bc}{N}\right) \zeta_+^c(1 - 2s - \kappa) = \frac{\Gamma(2s + \kappa) N^{2s + \kappa}}{(-2 \pi i)^{\kappa}(2 \pi)^{2s}} e^{\pi i s} \zeta^b(2s + \kappa).$$
The reflection formula implies
$$\frac{\Gamma(1 - 2s - \kappa) \Gamma(2s + \kappa)}{\Gamma(1 - s - \kappa)} = \Gamma(s + \kappa) \frac{\sin(\pi s)}{\sin(2 \pi s)} = \frac{\Gamma(s + \kappa)}{2 \cos(\pi s)}.$$
Together with
$$\zeta^b(2s + \kappa)  + (-1)^\kappa \zeta^{-b}(2s + \kappa) = 2 \cos(\pi s)e^{- \pi i s} (\zeta_+^b(2s + \kappa)  + (-1)^\kappa \zeta_+^{-b}(2s + \kappa))$$
we obtain
\begin{lem}\label{lem:ConstantTermZetaIdentity}
For $\beta \in \IZ / N \IZ$ we have
\begin{align*}
&\frac{\Gamma(1 - 2s - \kappa)}{\Gamma(1 - s - \kappa)} 2^{2s} \pi^s \sum_{b, c \in \IZ / N \IZ} e\left(\frac{bc}{N}\right) (\delta_{\beta, b} + (-1)^\kappa \delta_{-\beta, b}) \zeta_+^c(1 - 2s - \kappa) \\
&= \frac{N^{2s + \kappa} \Gamma(s + \kappa)}{(-2\pi i)^\kappa \pi^{s}} (\zeta_+^\beta(2s + \kappa) + (-1)^\kappa \zeta_+^{-\beta}(2s + \kappa)).
\end{align*}
\end{lem}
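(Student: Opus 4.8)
The plan is to chain together the three auxiliary identities collected immediately before the statement, so that the whole argument becomes a bookkeeping computation with no genuinely new input. First I would carry out the sum over $b$ on the left-hand side: the factor $\delta_{\beta, b} + (-1)^\kappa \delta_{-\beta, b}$ collapses the $b$-summation to the two terms $b = \beta$ and $b = -\beta$, leaving the inner double sum equal to
$$\sum_c e\left(\frac{\beta c}{N}\right) \zeta_+^c(1 - 2s - \kappa) + (-1)^\kappa \sum_c e\left(\frac{-\beta c}{N}\right) \zeta_+^c(1 - 2s - \kappa).$$

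Next I would apply the functional equation for the modified zeta functions (the displayed identity just above the lemma) to each of these two inner $c$-sums, at $b = \beta$ and $b = -\beta$ respectively. This replaces them by
$$\frac{\Gamma(2s + \kappa) N^{2s + \kappa}}{(-2 \pi i)^{\kappa}(2 \pi)^{2s}} e^{\pi i s} \left(\zeta^\beta(2s + \kappa) + (-1)^\kappa \zeta^{-\beta}(2s + \kappa)\right),$$
so that the gamma-quotient $\Gamma(1 - 2s - \kappa)/\Gamma(1 - s - \kappa)$ carried in the prefactor now multiplies a $\Gamma(2s + \kappa)$. Applying the reflection-formula identity then collapses this triple of gamma factors to $\Gamma(s + \kappa)/(2\cos(\pi s))$.

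Finally I would use the last of the three collected identities to rewrite $\zeta^\beta + (-1)^\kappa \zeta^{-\beta}$ in terms of $\zeta_+^\beta + (-1)^\kappa \zeta_+^{-\beta}$, which contributes a factor $2\cos(\pi s)\, e^{-\pi i s}$. At this point everything cancels cleanly: the $2\cos(\pi s)$ cancels the $1/(2\cos(\pi s))$, the $e^{\pi i s}$ cancels the $e^{-\pi i s}$, and the remaining power factor $2^{2s}\pi^s \cdot (2\pi)^{-2s} = \pi^{-s}$, leaving exactly the claimed right-hand side $\frac{N^{2s + \kappa}\Gamma(s + \kappa)}{(-2\pi i)^\kappa \pi^s}\bigl(\zeta_+^\beta(2s+\kappa) + (-1)^\kappa\zeta_+^{-\beta}(2s+\kappa)\bigr)$.

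There is no conceptual obstacle here, since all three ingredients are already in hand; the only place to be careful is tracking the auxiliary factors $e^{\pm\pi i s}$, $\cos(\pi s)$, and the powers of $2$ and $\pi$, making sure the $(2\pi)^{2s}$ produced by the functional equation combines correctly with the $2^{2s}\pi^s$ sitting in front. Since all of these are equalities of meromorphic functions, the chain of identities is valid wherever the factors are finite and the conclusion extends to the full domain by meromorphic continuation.
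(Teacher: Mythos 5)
Your proposal is correct and is exactly the paper's own derivation: the lemma is stated there as an immediate consequence of the three preceding identities (the functional equation of $\zeta_+^c$, the reflection-formula computation of the gamma quotient, and the relation between $\zeta^b$ and $\zeta_+^{\pm b}$), chained together precisely as you describe, and your bookkeeping of the factors $e^{\pm\pi i s}$, $2\cos(\pi s)$, and $2^{2s}\pi^s(2\pi)^{-2s}=\pi^{-s}$ checks out.
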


The results of Section \ref{sec:UnfoldingAgainstTheta} yield the Fourier expansion

\begin{thm}\label{thm:ThetaLiftFourierExpansion}
Let $z \in \Iso_0(L)$ of level $N_z$ and let $z' \in L'$ with $(z, z') = 1$. The theta lift $\Phi_{k, \beta}(Z, s)$ has the Fourier expansion in the cusp $z$ given by
\begin{align*}
&\frac{i^\kappa }{2 \sqrt{2}\lvert Y \rvert^{\kappa-1}} \Phi^K_{k, \beta}\left(\frac{Y}{\lvert Y \rvert}, s\right) + \sum_{\lambda \in K'} b_{k, \beta}(\lambda, Y, s) e(\lambda, X),
\end{align*}
where the Fourier coefficient $b_{k, \beta}(0, Y, s)$ is given by
\begin{align*}
& \sum_{b \in \IZ / N_z \IZ} \bigg(\frac{\Gamma(s + \kappa) N_z^{2s + \kappa}}{(-2 \pi i)^\kappa \pi^s} q(Y)^s (\delta_{\beta, \frac{bz}{N_z}} + (-1)^\kappa \delta_{- \beta, \frac{bz}{N_z}}) \zeta_+^b(2s + \kappa) \\
&+ \frac{\Gamma(1 - s - k + \kappa) N_z^{2 - 2s - 2k + \kappa}}{(-2 \pi i)^\kappa \pi^{1 - s - k}} q(Y)^{1 - s - k} c_{k, \beta}\left(\frac{bz}{N_z}, 0, s\right) \zeta_+^b(2 - 2s - 2k + \kappa)\bigg).
\end{align*}
For $q(\lambda) = 0, \lambda \neq 0$ the Fourier coefficient $b_{k, \beta}(\lambda, Y, s)$ is given by
\begin{align*}
& \frac{2 \lvert(\lambda, Y) \rvert^{\frac{1}{2}}}{2^\kappa} \sum_{b \in \IZ / N_z \IZ} \bigg(\frac{q(Y)^s}{\lvert (\lambda, Y) \rvert^{s}} e\left(- \frac{(\lambda, \zeta)}{N_z}\right) \\
&\times \sum_{n \mid \lambda} n^{2s - 1 + \kappa} (\delta_{\beta, \frac{\lambda}{n} - \frac{(\lambda, \zeta)}{nN_z} z + \frac{bz}{N_z}} + (-1)^\kappa \delta_{-\beta, \frac{\lambda}{n} - \frac{(\lambda, \zeta)}{nN_z} z + \frac{bz}{N_z}}) e\left(\frac{nb}{N_z}\right) \\
&\times \sum_{h = 0}^\infty \sum_{j = 0}^\infty \frac{(-1)^{j}}{(4 \pi \lvert(\lambda, Y)\rvert)^j j!} {\kappa \choose h} \frac{(\kappa - h)!}{(\kappa-h-2j)!} \\
&\times \left(\frac{(\lambda, Y)}{\lvert (\lambda, Y) \rvert}\right)^{\kappa-h} K_{s - \frac{1}{2} + \kappa - h - j}(2 \pi \lvert (\lambda, Y) \rvert)\\
&+ \frac{q(Y)^{1 - s - k}}{\lvert (\lambda, Y) \rvert^{1 - s - k}} e\left(- \frac{(\lambda, \zeta)}{N_z}\right) \\
&\times \sum_{n \mid \lambda} n^{1 - 2s - 2k + \kappa} c_{k, \beta}\left(\frac{\lambda}{n} - \frac{(\lambda, \zeta)}{nN_z} z + \frac{bz}{N_z}, 0, s\right) e\left(\frac{nb}{N_z}\right) \\
&\times \sum_{h = 0}^\infty \sum_{j = 0}^\infty \frac{(-1)^{j}}{(4 \pi \lvert(\lambda, Y)\rvert)^j j!} {\kappa \choose h} \frac{(\kappa - h)!}{(\kappa-h-2j)!} \\
&\times \left(\frac{(\lambda, Y)}{\lvert (\lambda, Y) \rvert}\right)^{\kappa-h} K_{\frac{1}{2} - s - k + \kappa - h - j}(2 \pi \lvert (\lambda, Y ) \rvert)\bigg).
\end{align*}
For $q(\lambda) \neq 0$ the Fourier coefficient $b_{k, \beta}(\lambda, Y, s)$ is given by
\begin{align*}
&\frac{1}{\sqrt{2}\lvert Y \rvert^{\kappa-1}} \sum_{b \in \IZ / N_z \IZ} \sum_{h = 0}^\infty (2i)^{-h} \sum_{j = 0}^\infty \frac{(-1)^j i^{h}}{(8 \pi)^j j!} {\kappa \choose h} \frac{(\kappa-h)!}{(\kappa-h-2j)!} \lvert Y \rvert^h \left(\frac{(\lambda, Y)}{\lvert Y \rvert}\right)^{\kappa-h - 2j} \\
&\times e\left(- \frac{(\lambda, \zeta)}{N_z}\right) \sum_{n \mid \lambda} n^{2j - \kappa + 2h} e\left(\frac{nb}{N_z}\right) c_{k, \beta}\left(\frac{\lambda}{n} - \frac{(\lambda, \zeta)}{nN_z} z + \frac{bz}{N_z}, \frac{q(\lambda)}{n^2}, s\right) \\
&\times \int_0^\infty \exp\left(-\frac{\pi n^2}{2vz_Z^2} - \frac{2 \pi v q_w(\lambda)}{n^2}\right) \calW_s\left(4 \pi \frac{q(\lambda)}{n^2}v\right) v^{-\frac{3}{2} + \kappa - h - j} \mathrm{d}v.
\end{align*}
\end{thm}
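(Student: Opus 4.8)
The plan is to specialize the general formula of Theorem~\ref{thm:UnfoldingAgainstTheta} to the lattice $L$ of signature $(2,l)$, the harmonic polynomial $p(x_1,x_2) = (x_1 + ix_2)^\kappa$ and the isometry $\nu_{Z_L}$, and then to evaluate the three contributions ($\lambda = 0$, $q(\lambda) = 0 \neq \lambda$, $q(\lambda) \neq 0$) separately. First I would insert the geometric data collected at the beginning of this section, namely $\lvert z_{\nu^+}\rvert = 1/\lvert Y\rvert$, $\mu_K = X$ and $\lvert \lambda_{\omega^+}\rvert = \lvert(\lambda,Y)\rvert\lvert z_{\nu^+}\rvert$, and multiply the whole expression by the normalizing factor $\frac{i^\kappa}{2\lvert Y\rvert^\kappa}$ coming from the definition $\Phi_{k,\beta}(Z,s) = \frac{i^\kappa}{2\lvert Y\rvert^\kappa}\Phi_{k,\beta}(\nu_{Z_L},p,s)$. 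The leading term $\frac{1}{\sqrt 2\lvert z_{\nu^+}\rvert}\Phi^K_{k,\beta}(w,p_{w,0},s)$ then becomes exactly $\frac{i^\kappa}{2\sqrt 2\lvert Y\rvert^{\kappa-1}}\Phi^K_{k,\beta}(Y/\lvert Y\rvert,s)$, producing the first summand of the asserted expansion.

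Next I would make the polynomial decomposition $p(\nu_{Z_L}(\lambda)) = \sum_h (\lambda,z_{\nu^+})^h p_{\omega,h}(\omega(\lambda))$ explicit. Since $z_{\nu^+}$ points in the $X_L$-direction and $\omega^+ = \IR Y_L$, expanding $(\lambda,Z_L)^\kappa = ((\lambda,X_L) + i(\lambda,Y_L))^\kappa$ by the binomial theorem identifies each $p_{\omega,h}$ up to the factor $\binom{\kappa}{h}$ and a power of $i$; applying the Gaussian operator $\exp(-\Delta/(8\pi v))$ to the component polynomials $p_{\omega,h}$, which are homogeneous of degree $\kappa-h$ in the single remaining positive variable and hence no longer harmonic, produces the factors $\binom{\kappa}{h}\frac{(\kappa-h)!}{(\kappa-h-2j)!}$ together with the signs $(-1)^j i^h$ visible in the stated coefficients. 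With this in hand the remaining two Fourier coefficients are read off from the two integral lemmas above: for $q(\lambda) = 0 \neq \lambda$ the $v$-integral evaluates to the $K$-Bessel functions via \cite[p. 313, 6.3(17)]{Erdelyi}, and after substituting the geometric data the Bessel argument $2\pi n\lvert\lambda_{\omega^+}\rvert/\lvert z_{\nu^+}\rvert$ becomes $2\pi n\lvert(\lambda,Y)\rvert$; for $q(\lambda)\neq 0$ no closed form exists and the $v$-integral against $\calW_s$ is recorded as it stands. In both cases one must resort the sum over $\lambda\in K'$, $\delta$ with $\pi(\delta) = \lambda + K$ and $n\geq 1$ into a single sum over $\lambda\in K'$ with an inner divisor sum $\sum_{n\mid\lambda}$, exactly as in the resorting step of Theorem~\ref{thm:UnfoldingAgainstEisenstein} and Theorem~\ref{thm:ThetaLiftIsEisensteinSeries}; the phase $e(n((\lambda,\mu_K)+(\delta,z')))$ with $\mu_K = X$ then assembles into the Fourier character $e(\lambda,X)$ together with the arithmetic phases $e(-(\lambda,\zeta)/N_z)$ and $e(nb/N_z)$.

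The delicate part, and the main obstacle, is the constant term $b_{k,\beta}(0,Y,s)$. Here the summand $\frac{(-\Delta)^j(\overline p_{w,h})(0)}{(8\pi)^j j!}$ is nonzero only when $\deg \overline p_{w,h} = \kappa - h = 2j$, so $h$ is forced to equal $\kappa - 2j$; after this substitution the arising $j$-sum of products of two Gamma functions matches, up to the explicit polynomial and $\pi$-power prefactors, the sum evaluated in Lemma~\ref{lem:SumOfGummaFunctions}, which collapses it to the quotient $\Gamma(1-2s-\kappa)/\Gamma(1-s-\kappa)$ (respectively its image under $s\mapsto 1-k-s$ for the second piece) times a single power of $\pi$. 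What remains is to extract the constant term in the Laurent expansion at $t = 0$: the modified zeta functions appear at the reflected argument $\zeta_+^c(1 - 2s - \kappa + 2t)$, and one must invoke the functional equation of $\zeta_+^c$ recorded in Lemma~\ref{lem:ConstantTermZetaIdentity} to rewrite the $v^s$-part as $\zeta_+^b(2s+\kappa)$, with the $\delta_{\beta,\,\cdot} + (-1)^\kappa\delta_{-\beta,\,\cdot}$ structure emerging from $\zeta^b = \zeta_+^b + (-1)^\kappa\zeta_+^{-b}$, while the $v^{1-s-k}$-part coming from $c_{k,\beta}(bz/N_z,0,s)$ yields the companion term with $\zeta_+^b(2 - 2s - 2k + \kappa)$. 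Keeping careful track of the powers of $N_z$ and of the interplay between the poles of the Gamma factors and the values of the zeta functions as $t\to 0$ is where the computation is most error-prone.
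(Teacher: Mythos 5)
Your proposal is correct and follows essentially the same route as the paper: specialize Theorem~\ref{thm:UnfoldingAgainstTheta} to $p(x_1,x_2)=(x_1+ix_2)^\kappa$ and the isometry $\nu_{Z_L}$, compute $p_{\omega,h}$ and $(-\Delta)^j\overline{p}_{\omega,h}$ via the binomial expansion of $(\lambda,Z_L)^\kappa$, read off the $q(\lambda)=0$ and $q(\lambda)\neq 0$ coefficients from the two integral lemmas with a divisor-sum reordering, and collapse the constant term using the constraint $h=\kappa-2j$ together with Lemma~\ref{lem:SumOfGummaFunctions} and Lemma~\ref{lem:ConstantTermZetaIdentity}. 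This is the paper's argument in all essentials, including the identification of the delicate steps.
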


\begin{proof}
Theorem \ref{thm:UnfoldingAgainstTheta} shows that the theta lift $\Phi_{k, \beta}(Z, s)$ is given by
\begin{align*}
&\frac{i^\kappa \lvert Y \rvert^{1 - \kappa}}{2\sqrt{2}} \Phi^K_{k, \beta}(w, p_{\omega,0}, s) + \frac{i^\kappa \lvert Y \rvert^{1 - \kappa}}{\sqrt{2}} \sum_{h} \frac{1}{(2 i)^{h}} \\
&\times \sum_{\lambda \in K'} \sum_{\substack{\delta \in L_0' / L \\ \pi(\delta) = \lambda + K}} \sum_{j = 0}^{\infty} \frac{(-\Delta)^j(\overline{p}_{\omega,h})(w(\lambda))}{(8\pi)^j j!} \sum_{n = 1}^\infty n^{h} e(n((\lambda, X) + (\delta, z'))) \\
&\times \int_{v = 0}^\infty \exp\left(-\frac{\pi n^2 q(Y)}{v} - 2 \pi v q_w(\lambda)\right) c_{k, \beta}(\delta, q(\lambda), s, v) v^{-\frac{3}{2} + \kappa - h - j - t} \mathrm{d}v.
\end{align*}
We have
\begin{align*}
p(\nu(\lambda))
&= \left(\frac{(\lambda, X + iY)}{\lvert Y \rvert}\right)^\kappa \\
&= \sum_{h} (\lambda, X)^{h} {\kappa \choose h} i^{\kappa - h} \lvert Y \rvert^{-\kappa} (\lambda, Y)^{\kappa - h} \\
&= \sum_{h} (\lambda, z_{\nu^+})^{h} {\kappa \choose h} i^{\kappa - h} \lvert Y \rvert^{2h - \kappa} (\lambda, Y)^{\kappa - h},
\end{align*}
hence
\begin{align*}
p_{\omega, h}(w(\lambda)) = {\kappa \choose h} i^{\kappa - h} \lvert Y \rvert^{h} (\lambda, Y / \lvert Y \rvert)^{\kappa - h}
\end{align*}
and
\begin{align*}
(-\Delta)^j \overline{p}_{\omega, h}(w(\lambda)) 
&= (-1)^j i^{h - \kappa} {\kappa \choose h} \frac{(\kappa - h)!}{(\kappa - h - 2j)!} \lvert Y \rvert^{h} (\lambda, Y / \lvert Y \rvert)^{\kappa - h - 2j}.
\end{align*}
The term for $\lambda = 0$ is now given by
\begin{align*}
&\frac{i^\kappa \lvert Y \rvert^{1 - \kappa}}{\sqrt{2}} \sum_{h} \frac{1}{(2 i)^{h}} \sum_{j = 0}^{\infty} \frac{(-\Delta)^j(\overline{p}_{\omega,h})(0)}{(8\pi)^j j!} \left(\pi q(Y)\right)^{\kappa - h - j - \frac{1}{2}} \\
&\times \sum_{b, c \in \IZ / N_z \IZ} e\left(\frac{bc}{N_z}\right) \bigg(\Gamma\left(\frac{1}{2} - s - \kappa + h + j\right) \left(\pi q(Y)\right)^{s} \\
&\times (\delta_{\beta, \frac{bz}{N_z}} + (-1)^\kappa \delta_{- \beta, \frac{bz}{N_z}}) \zeta_+^c(-2s + 1 - 2\kappa + h + 2j) \\
&+ \Gamma\left(-\frac{1}{2} - \kappa + h + j + k + s\right) \left(\pi q(Y)\right)^{1 - k - s} \\
&\times c_{k, \beta}\left(\frac{bz}{N_z}, 0, s\right) \zeta_+^c(- 1 - 2\kappa + h + 2j + 2k + 2s)\bigg).
\end{align*}
Obviously
$$(-\Delta)^j(\overline{p}_{\omega,\kappa - 2j})(0) = {\kappa \choose 2j} (2j)! \lvert Y \rvert^{\kappa - 2j}$$
and $(-\Delta)^j(\overline{p}_{\omega,h})(0) = 0$ for $\kappa - 2j \neq h$. Hence we can rewrite this to
\begin{align*}
&\frac{1}{\sqrt{\pi}2^{\kappa}} \sum_{j = 0}^{\infty} (-1)^j {\kappa \choose 2j} \frac{(2j)!}{4^j j!} \\
&\times \sum_{b, c \in \IZ / N_z \IZ} e\left(\frac{bc}{N_z}\right) \bigg(\Gamma\left(\frac{1}{2} - s - j\right) \left(\pi q(Y)\right)^{s} (\delta_{\beta, \frac{bz}{N_z}} + (-1)^\kappa \delta_{- \beta, \frac{bz}{N_z}}) \zeta_+^c(1 - 2s - \kappa) \\
&+ \Gamma\left(s - \frac{1}{2} - j + k\right) \left(\pi q(Y)\right)^{1 - k - s} c_{k, \beta}\left(\frac{bz}{N_z}, 0, s\right) \zeta_+^c(2s - 1 - \kappa + 2k)\bigg).
\end{align*}
Using the duplication formula
\begin{align*}
\frac{(2j)!}{4^j j!} = \frac{\Gamma(j + \frac{1}{2})}{\sqrt{\pi}}
\end{align*}
and Lemma \ref{lem:SumOfGummaFunctions} we obtain
\begin{align*}
&\sum_{b, c \in \IZ / N_z \IZ} e\left(\frac{bc}{N_z}\right) \bigg(\frac{\Gamma(1 - 2s - \kappa)}{\Gamma(1 - s - \kappa)} 4^s \pi^{s} q(Y)^s (\delta_{\beta, \frac{bz}{N_z}} + (-1)^\kappa \delta_{- \beta, \frac{bz}{N_z}}) \zeta_+^c(1 - 2s - \kappa) \\
&+ \frac{\Gamma(2s + 2k - 1 - \kappa)}{\Gamma(s + k - \kappa)} 4^{1 - s - k} \pi^{1 - s - k} q(Y)^{1 - s - k} c_{k, \beta}\left(\frac{bz}{N_z}, 0, s\right) \zeta_+^c(2s + 2k - 1 - \kappa)\bigg).
\end{align*}
Applying Lemma \ref{lem:ConstantTermZetaIdentity} yields
\begin{align*}
&\sum_{b \in \IZ / N_z \IZ} \bigg(\frac{N_z^{2s + \kappa} \Gamma(s + \kappa)}{(-2 \pi i)^\kappa \pi^s} q(Y)^s (\delta_{\beta, \frac{bz}{N_z}} + (-1)^\kappa \delta_{- \beta, \frac{bz}{N_z}}) \zeta_+^b(2s + \kappa) \\
&+ \frac{N_z^{2 - 2s - 2k + \kappa} \Gamma(1 - s - k + \kappa)}{(-2 \pi i)^\kappa \pi^{1 - s - k}} q(Y)^{1 - s - k} c_{k, \beta}\left(\frac{bz}{N_z}, 0, s\right) \zeta_+^b(2 - 2s - 2k + \kappa)\bigg).
\end{align*}
Now, the terms for $q(\lambda) = 0, \lambda \neq 0$ are given by
\begin{align*}
&\frac{i^\kappa \lvert Y \rvert^{1 - \kappa}}{\sqrt{2}} \sum_{h} \frac{1}{(2 i)^{h}} \sum_{\substack{\delta \in L_0' / L \\ \pi(\delta) = \lambda + K}} \sum_{j = 0}^{\infty} \frac{(-\Delta)^j(\overline{p}_{\omega,h})(w(\lambda))}{(8\pi)^j j!} \sum_{n = 1}^\infty n^{h} e(n((\lambda, X) + (\delta, z'))) \\
&\times \int_{v = 0}^\infty \exp\left(-\frac{\pi n^2 q(Y)}{v} - 2 \pi v q_w(\lambda)\right) c_{k, \beta}(\delta, q(\lambda), s, v) v^{-\frac{3}{2} + \kappa - h - j - t} \mathrm{d}v
\end{align*}
and the integral is
\begin{align*}
& 2 (\delta_{\beta, \delta} + (-1)^\kappa \delta_{-\beta, \delta}) \left(\frac{n q(Y)}{\lvert (\lambda, Y) \rvert}\right)^{s - \frac{1}{2} + \kappa - h - j - t} K_{s - \frac{1}{2} + \kappa - h - j - t}\left(2\pi n \lvert (\lambda, Y) \rvert\right) \\
&+ 2 c_{k, \beta}(\delta, 0, s) \left(\frac{n q(Y)}{\lvert (\lambda, Y) \rvert}\right)^{\frac{1}{2} + \kappa - h - j - k - s - t} K_{\frac{1}{2} + \kappa - h - j - k - s - t}\left(2\pi n \lvert (\lambda, Y) \rvert \right).
\end{align*}
Now plug in the definition of $(- \Delta)^j (\overline{p}_{\omega, h})(w(\lambda))$ and reorder the sum as a divisor sum to obtain for $t = 0$
\begin{align*}
& 2^{- \kappa} \sum_{h} \sum_{j = 0}^{\infty} \frac{(-1)^j}{(4\pi \lvert (\lambda, Y) \rvert)^j j!} {\kappa \choose h} \frac{(\kappa - h)!}{(\kappa - h)!} \left(\frac{(\lambda, Y)}{\lvert (\lambda, Y) \rvert} \right)^{\kappa - h} \\
&\times \sum_{n \mid \lambda}^\infty \sum_{\substack{\delta \in L_0' / L \\ \pi(\delta) = \frac{\lambda}{n} + K}} e((\lambda, X) + (n\delta, z')) \\
&\times \bigg(2 (\delta_{\beta, \delta} + (-1)^\kappa \delta_{-\beta, \delta}) n^{2s - 1 + \kappa} \lvert (\lambda, Y) \rvert^{\frac{1}{2} - s} q(Y)^{s} K_{s - \frac{1}{2} + \kappa - h - j}\left(2\pi \lvert (\lambda, Y) \rvert\right) \\
&+ 2 c_{k, \beta}(\delta, 0, s) n^{1 - \kappa - 2k - 2s} \lvert (\lambda, Y) \rvert^{s + k - \frac{1}{2}} q(Y)^{1 - k - s} K_{\frac{1}{2} + \kappa - h - j - k - s}\left(2\pi \lvert (\lambda, Y) \rvert \right)\bigg).
\end{align*}
Now $\delta \in L_0' / L$ with $\pi(\delta) = \frac{\lambda}{n} + K$ are given by $\frac{\lambda}{n} - \frac{(\lambda, \zeta)}{nN_z} z + \frac{b}{N_z} z$, where $b$ runs through $\IZ / N_z \IZ$.
This shows the result.
\end{proof}

Denote the Fourier coefficient of $q(Y)^{1 - k - s}$ by $\varphi_{k, \beta}\left(\frac{z}{N_z}, s\right)$, i.e. it is given by
$$\frac{\Gamma(1 - s - k + \kappa) N_z^{2 - 2s - 2k + \kappa}}{(-2 \pi i)^\kappa \pi^{1 - s - k}} \sum_{b \in \IZ / N_z \IZ} c_{k, \beta}\left(\frac{bz}{N_z}, 0, s\right) \zeta_+^b(2 - 2s - 2k + \kappa).$$
Then, as we would expect, the functional equation has the form
\begin{align*}
\Phi_{k, \beta}(Z, s)
&= \frac{\Gamma(s + \kappa)}{(-2\pi i)^{\kappa}\pi^{s}} \sum_{\delta \in \Iso(L' / L)} N_\delta^{2s + \kappa} \zeta_+^{k_{\delta\beta}}(2s + \kappa) \calG_{\kappa, \delta}(Z, s) \\
&= \frac{1}{2} \sum_{\delta \in \Iso(L' / L)} \varphi_{k, \beta}(\delta, s) \calG_{\kappa, \delta}(Z, 1 - s - k).
\end{align*}

\begin{exmpl}
Assume that $L$ is a maximal lattice of Witt rank $2$. Then it can be easily seen that $L$ splits two hyperbolic planes over $\IZ$. Up to the action of $\Gamma(L)$ there is only one cusp $z \in \Iso_0(L)$ and the discriminant group $L' / L$ is anisotropic, i.e. $\Iso(L' / L) = \{0\}$. Hence there is exactly one vector-valued non-holomorphic Eisenstein series $E_{k, 0}(\tau, s)$ and exactly one orthogonal non-holomorphic Eisenstein series $\calE_{\kappa, z}(Z, s)$ for $\Gamma(L)$ corresponding to $0$-dimensional cusps. We obtain the meromorphic continuation of $\calE_{\kappa, z}(Z, s)$ and
$$\Phi_{k,0}(Z, s) = 2 \frac{\Gamma(s + \kappa) \zeta(2s + \kappa)}{(-2 \pi i)^\kappa \pi^s} \calE_{\kappa, z}(Z, s).$$
Moreover, we have the functional equation
$$\calE_{\kappa, z}(Z, s) = \frac{1}{2} \tilde{\varphi}_{\kappa, 0}(z, s) \calE_{\kappa, z}(Z, 1 - k - s),$$
where $\tilde{\varphi}_{k, 0}(z, s)$ is given by
$$\tilde{\varphi}_{k, 0}(z, s) = \frac{\pi^s \Gamma(1 - k - s + \kappa) \zeta(2(1 - k - s) + \kappa)}{\pi^{1 - k - s} \Gamma(s + \kappa) \zeta(2s + \kappa)} c_{k, 0}(0, 0, s).$$
\end{exmpl}

\renewcommand\bibname{References}
\bibliographystyle{alphadin}
\bibliography{OrthogonalEisensteinSeriesAndThetaLifts}

\end{document}